\numberwithin{equation}{section}
\newtheorem{theorem}{Theorem}[section] %
\newtheorem{lemma}[theorem]{Lemma} %
\newtheorem{remark}[theorem]{Remark} %
\begin{document}
\title{Diophantine equations \\ involving Euler's totient function}

\author{  Yong-Gao Chen\footnote{Corresponding author, E-mail: ygchen@njnu.edu.cn(Y.-G. Chen)}  and Hao Tian \\
\small  School of Mathematical Sciences and Institute of Mathematics, \\
\small  Nanjing Normal University,  Nanjing  210023,  P. R. China
}
\date{}
\maketitle \baselineskip 18pt \maketitle \baselineskip 18pt

\begin{abstract}   In this paper, we consider the equations involving Euler's totient function $\phi $
 and Lucas type sequences. In particular,
we prove that the equation $\phi (x^m-y^m)=x^n-y^n$ has no
solutions in positive integers $x, y, m, n$ except for the trivial
solutions $(x, y, m , n)=(a+1, a, 1, 1)$, where $a$ is a positive
integer, and the equation $\phi ((x^m-y^m)/(x-y))=(x^n-y^n)/(x-y)$
has no solutions in positive integers $x, y, m, n$ except for the
trivial solutions $(x, y, m , n)=(a, b, 1, 1)$, where $a, b$ are
integers with $a>b\ge 1$.

\medskip

{\noindent\bf 2010 MR Subject Classification:} 11A25, 11D61, 11D72

{\noindent\bf Keywords:} Diophantine equations; Euler's totient
function; primitive divisors; applications of sieve methods
\end{abstract}

\section{Introduction}

There are many famous problems on Euler's totient function $\phi
$. For example, the equation $\phi (n)=\phi (n+k)$ has brought
many interests (see  Ballew, Case and Higgins
\cite{BallewCaseHiggins1975}, Holt \cite{Holt2003}, Lal and
Gillard \cite{LalandGillard1972}, Schinzel \cite{Schinzel1958}).
In 1932, Lehmer \cite{Lehmer1932} asked whether there are
composite numbers $n$ for which  $n-1$ is divisible by $\phi (n)$.
In 1922, Carmichael \cite{Carmichael1922} conjectured that, for
every positive integer $n$, there exists a positive integer
$m\not= n$ such that $\phi (m)=\phi (n)$. For related progress,
one may see Banks etc
\cite{BanksFriedlanderLucaPappalardiShparlinski2006}, Bateman
\cite{Bateman1972}, Contini, Croot and Shparlinski
\cite{ContiniCrootShparlinski2006}, Erd\H os \cite{Erdos1945} and
\cite{Erdos1958}, Erd\H os and Hall
\cite{ErdosHall1973,ErdosHall1976,ErdosHall1977},  Ford
\cite{Ford1999}, Guderson \cite{Guderson1943}, Guy
\cite[B36-B42]{Guy2004}, Pomerance \cite{Pomerance1980}, and
Rotkiewicz \cite{Rotkiewicz1961}.

 In this paper, we consider the equations involving Euler's totient function $\phi $ and Lucas type sequences.
 In particular, we consider the following equations
\begin{equation}\label{eqn1} \phi\left( x^m-y^m \right) =x^n-y^n\end{equation}
and
\begin{equation}\label{eqn1x} \phi\left( \frac{x^m-y^m}{x-y} \right) =\frac{x^n-y^n}{x-y}\end{equation}
in positive integers $x, y, m, n$.

Luca \cite{Luca2008} proved that, if $b\ge 2$ is a fixed integer,
then the equation
$$\phi \left( x \frac{b^m-1}{b-1} \right) = y
\frac{b^n-1}{b-1},\quad x, y\in \{ 1, 2, \dots , b-1\} $$ has only
finitely many positive integer solutions $(x, y, m, n)$. In 2015,
Faye and Luca \cite{Luca2015} proved that, if $(m, n, x)$ is a
 solution of
\begin{equation}\label{eqn1xxx}\phi\left( x^m-1 \right) =x^n-1\quad  \text{ or } \quad  \phi\left( \frac{x^m-1}{x-1} \right)
=\frac{x^n-1}{x-1}\end{equation} in positive integers $x, m, n$
with $m>n$, then
$$x<e^{e^{8000}}.$$ Currently,  even for a given odd number
$x>2$, there is no method to know rapidly whether the equations of
\eqref{eqn1xxx}
 have solutions in positive integers $ m, n$
with $m>n$. In 2015, Faye,  Luca and Tall \cite{Luca2015a} proved
that the equation
\begin{equation*}\phi(5^m-1)=5^n-1\end{equation*}
has no solutions  in positive integers $m, n$. This solves a
problem in \cite{Luca1997}.

In this paper, the following results are proved.

\begin{theorem}\label{thm0} The equation \eqref{eqn1} has no solutions in positive integers $x, y,
m, n$ except for the trivial solutions $(x, y, m , n)=(a+1, a, 1,
1)$, where $a$ is a positive integer.\end{theorem}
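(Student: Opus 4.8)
First I would peel off the easy cases by parity and size, then use Mertens' theorem to force $x$ (and hence $y$) to be small compared to $m$, then bring in the cyclotomic factorization of $x^m-y^m$ and primitive prime divisors to bound $\tau(m)$ and to show $m-n$ is small, and finally kill the remaining unbounded range of $m$ with a sieve argument, leaving a finite set to check. Concretely: one may assume $x>y\ge1$; if $x^m-y^m\le2$ then, since $x-y\mid x^m-y^m$, necessarily $m=1$ and $x-y\in\{1,2\}$, and $x-y=2$ gives no solution while $x-y=1$ forces $x^n-y^n=\phi(1)=1$, hence $n=1$ --- the trivial solutions. So assume $N:=x^m-y^m\ge3$, whence $\phi(N)$ is even. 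If $x\not\equiv y\pmod2$ then $N$ is odd, so $x^n-y^n=\phi(N)$ would be even while it is odd; hence $x\equiv y\pmod2$, forcing $x\ge3$. Since $\phi(N)<N$ and $k\mapsto x^k-y^k$ is increasing we get $n<m$ and
\[
\frac{\phi(N)}{N}=\frac{x^n-y^n}{x^m-y^m}\le\frac{x^n-y^n}{x^{n+1}-y^{n+1}}<\frac1x\le\frac13 .
\]
Since $\phi(N)/N=\prod_{p\mid N}(1-1/p)\ge\prod_{i\le k}(1-1/p_i)$ with $k=\omega(N)$, Mertens' theorem turns $\phi(N)/N<1/x$ into $\log p_k\gg x$, so $\omega(N)\gg e^{cx}$ for an absolute $c>0$; as $\omega(N)\le\log N/\log 2<m\log_2x$, this gives $e^{cx}\ll m\log x$ and hence $x\ll\log m$, so $y<x\ll\log m$ as well.

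Next I would reduce to $x,y$ odd by a short $2$-adic analysis. Write $x=2^au$, $y=2^bv$ with $u,v$ odd. If $a\ne b$, say $a<b$, then $x^m-y^m=2^{am}O$ with $O:=u^m-(2^{b-a}v)^m$ odd, and comparing $2$-adic valuations in $\phi(x^m-y^m)=x^n-y^n$ gives $a(m-n)=1-v_2(\phi(O))\le1$, forcing $a=1$, $m-n=1$, $O=1$, so $N=x-y=2<3$, a contradiction. If $a=b$, the same reasoning applies after factoring out $2^{am}$ and working with the odd part of $u^m-v^m$. Thus I may assume $x,y$ are odd and $x\ge3$.

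Now I would exploit $x^m-y^m=\prod_{d\mid m}\Phi_d(x,y)$. By Zsygmondy's theorem, for each $d\mid m$ with $d\ge3$ the value $\Phi_d(x,y)$ has a prime factor $p_d\equiv1\pmod d$, and distinct $d$ give distinct $p_d$. As $\phi$ is multiplicative, $\prod_{d\mid m,\,d\ge3}(p_d-1)\mid\phi(N)=x^n-y^n$, so $m\mid x^n-y^n$ and $\omega(N)\ge\tau(m)-2$. Since each odd prime factor of $N$ contributes a factor $2$ to $\phi(N)$, $v_2(\phi(N))\ge\omega(N)-1\ge\tau(m)-3$; but the Lifting-the-Exponent lemma gives $v_2(x^n-y^n)\le v_2(x-y)+v_2(x+y)+v_2(n)\ll\log m$ (using $x,y\ll\log m$), so $\tau(m)\ll\log m$. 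Also, the Rosser--Schoenfeld bound $\phi(t)\gg t/\log\log t$ yields $\phi(N)\ge\prod_{d\mid m}\phi(\Phi_d(x,y))\ge N/(C\log\log N)^{\tau(m)}\ge N/m^{O(\log\log m)}$, which together with $\phi(N)=x^n-y^n$ and $(x^m-y^m)/(x^n-y^n)>x^{m-n}$ gives $x^{m-n}<m^{O(\log\log m)}$, i.e. $m-n\ll\log m\log\log m$.

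What remains is the case $x,y$ odd, $3\le x\ll\log m$, $\tau(m)\ll\log m$, $0<m-n\ll\log m\log\log m$, $m\mid x^n-y^n$, $\phi(x^m-y^m)=x^n-y^n$ --- a tightly constrained configuration still involving an unbounded $m$, and this is the step I expect to be the main obstacle. The surviving sub-case is essentially $m$ close to a prime power (so that $\tau(m)$ is small), where $\Phi_m(x,y)\asymp x^{\phi(m)}$ is large but composed only of primes $\equiv1\pmod m$; the divisibility $\prod(p_i-1)\mid x^n-y^n$, with $x^n-y^n<x^n$ tiny, forces $\Phi_m(x,y)$ to be a product of very few prime powers, and a sieve estimate --- a Brun--Titchmarsh bound for the progression $1\bmod m$ set against a lower bound for the number of distinct prime factors of the cyclotomic value $\Phi_m(x,y)$ --- should rule this out once $m$ exceeds an explicit bound, after which a finite computation finishes the proof. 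Making this final step effective, so that only a genuinely finite and checkable family of $(x,y,m,n)$ survives, is where the sieve methods advertised in the abstract are indispensable and where the bulk of the difficulty lies.
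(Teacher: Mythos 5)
Your reductions at the start (disposing of $m=1$, forcing $n<m$, the $2$-adic analysis ruling out $\nu_2(x)\ne\nu_2(y)$) match the paper's Section 3 in spirit, and your toolkit --- primitive divisors of $x^d-y^d$, Brun--Titchmarsh for primes $\equiv 1\pmod d$, Rosser--Schoenfeld --- is exactly the paper's. But the proof is not complete: the final step, which you yourself flag as ``the main obstacle,'' is precisely where the whole difficulty lives, and the route you propose for it (bound $m$ explicitly, then finish by finite computation) is not substantiated and is not how the argument can be closed. The paper never bounds $m$. Instead it gets an \emph{absolute} bound on $x$ (ultimately $x\le 80$) directly from $x< N/\phi(N)=\prod_{p\mid N}(1+\tfrac1{p-1})$: it splits the primes $p\mid N$ into $p\le x^4$ (contributing $\le \log\log x+O(1)$ by Mertens) and $p>x^4$ (for which necessarily $\ell_p\mid m$, so their reciprocal sum is controlled by Brun--Titchmarsh summed over divisors $d\mid m$, using a bound $d(m)<2\max\{p(m),x\}$), and the total falls short of $\log x$ once $x>80$. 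Your Mertens step only yields $x\ll\log m$, which leaves both $x$ and $m$ unbounded and does not set up a contradiction.

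The ingredient you are missing that makes the endgame work is the reduction to $\gcd(m,n)=1$ (for equation \eqref{eqn1} this reduction is immediate, since replacing $(x,y,m,n)$ by $(x^{d'},y^{d'},m/d',n/d')$ with $d'=\gcd(m,n)$ preserves the equation). With $\gcd(m,n)=1$, the divisibility $q^{k}\mid x_1^{n}-y_1^{n}$ extracted from primitive prime divisors of $x_1^{q^il_j}-y_1^{q^il_j}$ can be converted, via Euler's theorem and $\gcd(q,n)=1$, into $q^{\frac12\alpha_q d(m)-1}\mid x_1^{q-1}-y_1^{q-1}$. This both bounds $d(m)$ in terms of $x$ and $p(m)$ alone (no dependence on $m$) and, for $x\le 80$, is verifiable by computer for all primes $q<173$, forcing $p(m)\ge 173$; one then contradicts $x_1\phi(zd_1)/z<\prod_{\ell_p\mid m,\,\ell_p>1}(1+\tfrac1{p-1})$ numerically for every remaining $m$, bounded or not. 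Your weaker bounds $\tau(m)\ll\log m$ and $m-n\ll\log m\log\log m$ do not feed into any such contradiction, so as written the argument does not terminate.
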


\begin{theorem}\label{thm1} The  equation  \eqref{eqn1x} has no
solutions in positive integers $x, y, m, n$ except for the trivial
solutions $(x, y, m , n)=(a, b, 1, 1)$, where $a, b$ are integers
with $a>b\ge 1$.\end{theorem}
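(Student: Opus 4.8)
\medskip
\noindent\textbf{Proof plan.}
Put $U_k=U_k(x,y)=\dfrac{x^k-y^k}{x-y}=x^{k-1}+x^{k-2}y+\cdots+y^{k-1}$, so \eqref{eqn1x} reads $\phi(U_m)=U_n$; since $U_k(x,y)=U_k(y,x)$ we may take $x>y\ge1$, making $(U_k)_{k\ge1}$ strictly increasing with $U_1=1$. If $m=1$ then $\phi(U_1)=1=U_n$ forces $n=1$ and we obtain exactly the trivial solutions; if $m\ge2$ and $n\ge m$ then $U_n\ge U_m>\phi(U_m)$ (as $U_m\ge x+y\ge3$), impossible. So assume $m>n\ge2$. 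Writing $g=\gcd(x,y)$, $x=ga$, $y=gb$ with $\gcd(a,b)=1$, we have $U_k(x,y)=g^{k-1}U_k(a,b)$, and the case $g>1$ runs parallel to $g=1$ (the extra factor $g^{m-n-1}\phi(g)$ only helping the estimates), so assume $\gcd(x,y)=1$. From $U_m\equiv mx^{m-1}\pmod{x-y}$ one gets $\gcd(x-y,U_m)=\gcd(x-y,m)$, so a short computation with $\phi(AB)=\phi(A)\phi(B)\gcd(A,B)/\phi(\gcd(A,B))$ shows that $\phi(U_m)=U_n$ together with $\operatorname{rad}(x-y)\mid m$ forces $\phi(x^m-y^m)=x^n-y^n$, contradicting Theorem~\ref{thm0} since $m\ge2$; hence some prime divides $x-y$ but not $m$. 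Finally: if $x,y$ are not both odd then $U_k$ is always odd, so $\phi(U_m)$ (with $U_m\ge3$) is even while $U_n$ is odd, impossible; if $x,y$ are both odd and $n$ is odd, $U_n$ is a sum of $n$ odd numbers, hence odd, the same contradiction; so from now on $x\equiv y\equiv1\pmod2$ and $n$ is even.

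Under these hypotheses the lifting-the-exponent lemma gives $v_2(U_n)=v_2(n)+v_2(x+y)-1$ and, for an odd prime $\ell$, $v_\ell(U_n)\in\{0,\ v_\ell(n),\ v_\ell(x^{r}-y^{r})+v_\ell(n)\}$ with $r=\operatorname{ord}_\ell(x/y)$ dividing $\gcd(\ell-1,n)$; since $r\mid\ell-1$ this yields $v_\ell(U_n)\le(\ell-1)\log_\ell x+\log_2 m+1$ for all $\ell$. Next, $\phi(N)\gg N/\log\log N$ combined with $U_m/U_n=\frac{x^m-y^m}{x^n-y^n}\ge x^{m-n}\ge x$ gives $x^{m-n}\ll\log\log U_m\ll\log m$; hence $x\ll\log m$ and $m-n\ll\log\log m$, so $n$ is extremely close to $m$ and $x$ is small.

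The crucial input is the primitive divisor theorem. The sequence $(U_k)$ is a non-degenerate Lucas sequence with coprime parameters $P=x+y$, $Q=xy$ and discriminant $\Delta=(x-y)^2$, a perfect square. By Bilu--Hanrot--Voutier (with Bang/Carmichael and the explicit exceptional list for indices $\le30$), $U_d$ has a primitive prime divisor $q_d$ for every $d\ge31$, and since $\Delta$ is a square the rank of apparition of $q_d$ divides $q_d-\big(\tfrac{\Delta}{q_d}\big)=q_d-1$, so $d\mid q_d-1$. Thus for each $d\mid m$ with $d\ge31$, $q_d\mid U_d\mid U_m$ gives $q_d-1\mid\phi(U_m)=U_n$, hence $d\mid U_n$; with $d=m$ (legitimate once $m\ge31$) this says $m\mid U_n$. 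Since moreover $\prod_{p\mid U_m}(p-1)\mid\phi(U_m)=U_n$ and the $q_d$ are distinct, for every prime $\ell$
\begin{equation*}
v_\ell(U_n)\ \ge\ \sum_{p\mid U_m}v_\ell(p-1)\ \ge\ \sum_{\substack{d\mid m\\ d\ge31}}v_\ell(q_d-1)\ \ge\ \sum_{\substack{d\mid m\\ d\ge31}}v_\ell(d);
\end{equation*}
in particular $v_\ell(m)^2\ll(\ell-1)\log_\ell x+\log m$ for each $\ell$ by the bounds of the previous paragraph, and every prime factor $p$ of $U_m$ satisfies $p-1\mid U_n$, so $p\le U_n+1$ and $\omega(U_m)$ is at most the number of divisors $e$ of $U_n$ with $e+1$ prime. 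On the other hand the primitive divisor theorem gives $\omega(U_m)\ge\tau(m)-O(1)$.

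The endgame is to play these two bounds on $\omega(U_m)$ against each other: I would apply a sieve to bound $\#\{e\le z:\ e\mid U_n,\ e+1\text{ prime}\}$ and, together with the strong restrictions on $m$ ($m\mid U_n$, $n=m-O(\log\log m)$, and the constraints on the $v_\ell(m)$), show $\omega(U_m)<\tau(m)-O(1)$ for $m$ large, a contradiction; the finitely many surviving quadruples, which have $x$ and $m$ bounded, are then cleared by direct computation, using the tabulated Lucas sequences without primitive divisors for indices $\le30$. (The same scheme, applied to the Lucas-type sequence $x^k-y^k$, yields Theorem~\ref{thm0}.) I expect this endgame to be the main obstacle: converting ``$p-1\mid U_n$ for all $p\mid U_m$'' together with ``$m\mid U_n$'' and ``$n$ close to $m$'' into an outright contradiction, rather than merely a strong restriction on $m$, demands a quantitatively sharp sieve estimate and, in the upper bounds for $v_\ell(U_n)$, control of the possibly large valuations $v_\ell(x^{r}-y^{r})$ with $r=\operatorname{ord}_\ell(x/y)$.
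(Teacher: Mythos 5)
Your plan reproduces part of the paper's skeleton (primitive divisors of $U_d=(x^d-y^d)/(x-y)$ for $d\mid m$ give primes $q_d\equiv 1\pmod d$ dividing $U_m$, hence the $d$'s divide $U_n=\phi(U_m)$; this caps $\tau(m)$; an estimate for $N/\phi(N)$ caps $x$), but it has two genuine gaps. First, you never reduce to $\gcd(m,n)=1$, and for the fixed equation $\phi(U_m)=U_n$ this reduction is impossible: substituting $x\mapsto x^{d'}$, $y\mapsto y^{d'}$ with $d'=\gcd(m,n)$ turns $U_m(x,y)$ into $\frac{x^{d'}-y^{d'}}{x-y}\,U_{m'}(x^{d'},y^{d'})$, i.e.\ introduces an extraneous factor $z>1$. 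The paper's central device is to prove the statement for the whole family $\phi(zU_m)=zU_n$ with $1\le z\le x-y$, precisely so that this substitution stays inside the family and one may assume $\gcd(m,n)=1$. That coprimality is then used at every turn --- for instance to pass from $q^k\mid x_1^n-y_1^n$ to $q^k\mid x_1^{q-1}-y_1^{q-1}$ in Lemma \ref{lem3a}, which is what converts the Rotkiewicz-type divisibility into the bound $d(m)<2\max\{p(m),x\}$ and into the finite computations for small $x$. Without it, your upper bound $v_\ell(U_n)\le v_\ell(x^r-y^r)+v_\ell(n)$ carries the uncontrolled term $v_\ell(x^r-y^r)$ that you yourself flag as an obstacle.

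Second, the endgame is not a proof. Your proposed contradiction --- comparing $\omega(U_m)\ge\tau(m)-O(1)$ with a sieve count of divisors $e$ of $U_n$ such that $e+1$ is prime --- is not carried out, and it is not the route that works: $U_n$ has about $n\log x$ digits, so the set of such $e$ is far too large to be beaten by $\tau(m)$. The paper instead bounds $\log x<\sum_{p\mid zU_m}\log\left(1+\frac{1}{p-1}\right)$ and estimates $\sum_{\ell_p=d}1/p$ via Brun--Titchmarsh for each divisor $d$ of $m$, reaching $\log x<\log\log x+2.9$ for $x>80$; the range $x\le 80$ then requires a separate and delicate argument (Lemmas \ref{lem1a}--\ref{lem4cx}) resting on explicit computations such as $q^3\nmid x_1^{q-1}-y_1^{q-1}$ for $q<173$. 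Relatedly, your claim that the surviving cases have ``$x$ and $m$ bounded'' and can be cleared by computation is unsupported: no bound on $m$ is ever obtained (nor is one known --- Faye--Luca only bound $x$), and the paper never bounds $m$; it derives inequalities valid for all $m$. Finally, dismissing $\gcd(x,y)>1$ because ``the extra factor only helps'' is not legitimate as stated; the paper must carry $d_1=\gcd(x,y)$ through every lemma, and the quantity $x_1\phi(zd_1)/z$ is exactly what the final contradictions hinge on.
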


\section{Generalization and Crucial Reduction}

In this paper we will concern the following equation:
\begin{equation}\label{eqn1a}\phi
\left( z\frac{x^m-y^m}{x-y}\right)
=z\frac{x^n-y^n}{x-y}\end{equation} in positive integers $x, y, z,
m, n$ with $x>y$. Now Theorems \ref{thm0} and \ref{thm1} are two
special cases for $z=x-y$ and $z=1$, respectively.

It is clear that  $$(x, y, z, m, n)=(a, b, 1, 1, 1), \quad a, b\in
\mathbb{Z}^+, a>b\ge 1$$ are  solutions of \eqref{eqn1a}. Such
solutions are called the trivial solutions of \eqref{eqn1a}. Since
$\phi (k)=k$ if and only if $k=1$, it follows that $m>n$ if $(x,
y, z, m, n)$ is a nontrivial solution of \eqref{eqn1a}.

For the equation \eqref{eqn1a}, we have the following result.

\begin{theorem}\label{thm2} The  equation \eqref{eqn1a} has no
nontrivial solutions in positive integers $x, y, z, m, n$ with
$1\le z\le x-y$.\end{theorem}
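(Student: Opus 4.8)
The plan is to set $N := z\,(x^m-y^m)/(x-y)$, so that the equation becomes $\phi(N)=z\,(x^n-y^n)/(x-y)$ and hence
\[
\frac{N}{\phi(N)}=\frac{x^m-y^m}{x^n-y^n}.
\]
A nontrivial solution has $m>n$, so $m\ge 2$; using the identity $x^m-y^m=x^{m-n}(x^n-y^n)+y^n(x^{m-n}-y^{m-n})$ we get the elementary lower bound $N/\phi(N)>x^{m-n}\ge x$. On the other hand $z\le x-y$ gives $N\le x^m-y^m<x^m$, and one checks $N\ge 3$, so $\phi(N)$ is even and the classical inequality $k/\phi(k)<e^{\gamma}\log\log k+3/\log\log k$ applies to $k=N$. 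I would first reduce to $\gcd(x,y)=1$ (absorbing the common factor and checking the displayed inequalities persist), since this is what is needed to invoke primitive divisors.

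Second, combining $N/\phi(N)<e^{\gamma}\log\log N+3/\log\log N$ with $N<x^m$ yields
\[
x\ \le\ x^{m-n}\ <\ e^{\gamma}\log(m\log x)+3 ,
\]
which forces $x=O(\log m)$ and, more sharply, $x^{m-n}=O(\log m)$, i.e. $m-n=O(\log\log m)$. So in any putative solution $x$ is tiny compared with $m$ and $n$ is close to $m$.

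Third, for a lower bound on the prime content of $N$ I would use the Bang--Zsygmondy primitive divisor theorem. Writing $(x^m-y^m)/(x-y)=\prod_{d\mid m,\ d\ge 2}\Phi_d(x,y)$ with $\Phi_d$ the homogenized cyclotomic polynomial, for every $d\ge 2$ dividing $m$ outside the tiny exceptional set ($d=2$ with $x+y$ a power of $2$, and $(d,x,y)=(6,2,1)$) there is a primitive prime $p_d\equiv 1\pmod d$ dividing $N$. These $p_d$ are distinct odd primes, so $\prod_d(p_d-1)\mid\phi(N)$ forces $v_2(\phi(N))\ge \#\{d\mid m:\ d\ge 2,\ \text{non-exceptional}\}\ge \tau(m)-3$. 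But $v_2(\phi(N))=v_2(z)+v_2\!\big((x^n-y^n)/(x-y)\big)$, which by the lifting-the-exponent lemma is $O(\log x+\log n)$; with $x=O(\log m)$ and $n<m$ this gives $\tau(m)=O(\log m)$, eliminating all $m$ with many divisors. A parallel estimate — $\phi(N)\ge\prod_d d\gg m^{\tau(m)/2}$ against $\phi(N)<(2x)^n$, together with sieve (Brun--Titchmarsh) bounds on how many primes below a given height can divide $\prod_{d\mid m}\Phi_d(x,y)$ — should be pushed to a genuine bound on $m$.

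The hard part is the case where $m$ has few divisors, in the extreme $m$ prime, since then the cyclotomic factorization supplies only one primitive prime and the parity bound above is vacuous. Here I expect one must examine the factorization $\Phi_m(x,y)=m^{\varepsilon}\prod_i q_i^{a_i}$ (all $q_i\equiv 1\pmod m$) in detail: bound the number $s$ of the $q_i$ from $v_m(\phi(N))\ge s$ together with LTE, deduce that $\Phi_m(x,y)$ carries a prime power factor $q^a\mid N$ that is super-polynomially large in $m$, and rule out $a\ge 2$ (a Wall--Sun--Sun type rarity) and $a=1$ by comparison with $N<x^m$. Once $m$ — hence $x$, and then $n$ via a primitive divisor of $\Phi_m(x,y)$ — is bounded, the proof finishes by a finite verification. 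Along the way one separately dispatches the exceptional configurations: $\gcd(x,y)>1$, the Zsygmondy exceptions above, and $n=1$ (where $\phi(N)=z$ together with $\phi(N)\ge\sqrt N$ forces $m=2$, which is then immediately contradicted). The main obstacle is making this few-divisor analysis fully effective and completing the concluding computation.
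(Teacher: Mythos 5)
Your opening moves coincide with the paper's: the identity $N/\phi(N)=(x^m-y^m)/(x^n-y^n)>x^{m-n}\ge x$ for $N=z(x^m-y^m)/(x-y)$ is exactly Lemma~\ref{lem2.1}, and primitive divisors plus Brun--Titchmarsh are also the paper's tools. But your overall strategy --- bound $m$ effectively and finish by a finite check --- has a genuine gap, one you flag yourself, and it is not patchable along the lines you sketch. The inequality $x^{m-n}<e^{\gamma}\log(m\log x)+3$ bounds $x$ from above \emph{in terms of} $m$; it gives no upper bound on $m$, and for a fixed small $x$ (say $x=3$, $y=1$, $z\le 2$) it says nothing at all. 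That is precisely the regime the paper singles out as the real difficulty: even for one odd value of $x$, ruling out all $(m,n)$ is hard (the case $x=5$, $y=1$ was a separate paper of Faye, Luca and Tall). Your parity count only yields $\tau(m)=O(\log m)$, which excludes almost no $m$ since the average order of $\tau$ is $\log m$; and your treatment of the few-divisor case is explicitly conjectural, resting among other things on excluding square factors of $\Phi_m(x,y)$, a Wall--Sun--Sun-type statement not known unconditionally.

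The paper never bounds $m$. After reducing to $\gcd(m,n)=1$ --- a reduction you omit and which is essential --- it proves for each prime $q\mid m$ that $q^{\frac12\alpha_q d(m)-1}\mid x_1^{q-1}-y_1^{q-1}$ (Lemma~\ref{lem3a}): one Carmichael primitive prime $p_{i,j}\equiv 1\pmod{q^i l_j}$ for \emph{every} divisor $q^i l_j$ of $m$ is fed into $\phi$, and the resulting power $q^{\frac12\alpha_q d(m)}$ is pushed through the equation onto the $n$-side, where $\gcd(q,n)=1$ caps the exponent via Euler's theorem. This converts divisor-count information into $d(m)<2\max\{p(m),x\}$ (Lemma~\ref{lem3}). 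Combined with the Brun--Titchmarsh bound on $S_d=\sum_{\ell_p=d}1/p$ summed over $d\mid m$, the inequality $x<\prod_{p\mid N}\bigl(1+\frac{1}{p-1}\bigr)$ collapses to $\log x<\log\log x+2.861$, a contradiction for every $x>80$ \emph{uniformly in} $m$; the range $x\le 80$ is then settled by sharper versions of the same lemmas plus finite computation over $(x_1,y_1,q)$ with $q<173$, never over $m$. To repair your argument you would need to replace ``bound $m$'' by a mechanism valid for all $m$ simultaneously, which is exactly what Lemma~\ref{lem3a} and the $\sum_d S_d$ estimates supply.
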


Theorems \ref{thm0} and \ref{thm1} follow from Theorem \ref{thm2}
by taking $z=x-y$ and $z=1$, respectively. It is interesting that
it is difficult directly to give proofs of Theorems \ref{thm0} and
\ref{thm1}. But general Theorem \ref{thm2} is ``easily" proved.
Our another key observation is to find that we may assume that
$\gcd (m, n)=1$. Theorem \ref{thm2} is equivalent to the following
theorem.

\begin{theorem}\label{thm3} The  equation \eqref{eqn1a} has no
nontrivial solutions in positive integers $x, y, z, m, n$ with
$1\le z\le x-y$ and $\gcd (m, n)=1$.\end{theorem}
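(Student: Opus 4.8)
\medskip
\noindent\emph{Strategy of proof.}
I would argue directly with the equation written as $\phi(N)=R$, where
\begin{equation}\label{NR}
  N:=z\,\frac{x^{m}-y^{m}}{x-y},\qquad R:=z\,\frac{x^{n}-y^{n}}{x-y},
\end{equation}
bearing in mind that a nontrivial solution has $m>n\ge 1$, so $m\ge 2$, and that $\gcd(m,n)=1$. The decisive first remark is that the ratio $\phi(N)/N$ does not involve $z$:
\begin{equation}\label{ratio}
  \frac{\phi(N)}{N}=\frac{R}{N}=\frac{x^{n}-y^{n}}{x^{m}-y^{m}} .
\end{equation}
From $x^{m}-y^{m}=x^{m-n}(x^{n}-y^{n})+y^{n}(x^{m-n}-y^{m-n})>x^{m-n}(x^{n}-y^{n})$ we get $N/\phi(N)>x^{m-n}\ge x\ge 2$, so $2\mid N$; and, using $N\le x^{m}-y^{m}<x^{m}$ (this is exactly where $z\le x-y$ enters) together with the Rosser--Schoenfeld bound $N/\phi(N)<e^{\gamma}\log\log N+3$, one obtains
\begin{equation}\label{star}
  x^{m-n}<e^{\gamma}\log(m\log x)+3 .
\end{equation}
Equivalently, \eqref{ratio} forces $N$ to be divisible by a long initial block of primes, so the largest prime factor of $N$ exceeds $e^{c\,x^{m-n}}$ for an absolute $c>0$, whereas every prime factor of $N$ is below $x^{m}$; inequality \eqref{star} is a restatement of this clash.

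Next I would bring in primitive divisors. With $d=\gcd(x,y)$, $a=x/d$, $b=y/d$, the Bang--Zsygmondy theorem (in the sharp Birkhoff--Vandiver / Bilu--Hanrot--Voutier form for $a^{m}-b^{m}$) produces, outside a short explicit list of pairs $\bigl(m,(a,b)\bigr)$, a primitive prime divisor $P$ of $a^{m}-b^{m}$; then $P\nmid a-b$, hence $P\mid\frac{x^{m}-y^{m}}{x-y}\mid N$, and $P\equiv 1\pmod m$. Therefore
\[
  m\mid P-1\mid \phi(N)=R=z\,\frac{x^{n}-y^{n}}{x-y}\le x^{n}-y^{n}<x^{n},
\]
so $m<x^{n}$. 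Feeding this into \eqref{star} yields an inequality of the shape $x^{\,m-2n}<c'\log x$ with $c'$ absolute; combined with the fact that $\gcd(m,n)=1$ excludes $m=2n$ when $n\ge 2$, this forces $m$ and $n$ to be comparable once $x$ exceeds an absolute bound, and in particular it disposes --- up to a finite check --- of all solutions in which $x$ is large relative to $m$ and $n$. It is worth recording the rigidity that $\gcd(m,n)=1$ provides when $\gcd(x,y)=1$: then $\gcd\!\bigl(\frac{x^{m}-y^{m}}{x-y},\frac{x^{n}-y^{n}}{x-y}\bigr)=1$, so by \eqref{ratio} the fraction $\frac{x^{n}-y^{n}}{x^{m}-y^{m}}$ is already reduced, whence its denominator $\frac{x^{m}-y^{m}}{x-y}$ --- being the denominator of $\prod_{p\mid N}(1-1/p)$ --- is squarefree, and in fact equals the product of precisely those primes $p\mid N$ such that no prime factor of $N$ is congruent to $1$ modulo $p$.

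The subtlest range is $m-n$ small with $m$ and $n$ large relative to $x$, the extreme being $m=n+1$; there \eqref{star} only forces $m$ to be astronomically large and gives no upper bound. In this range I would squeeze more out of $\phi(N)=R$: take a primitive prime $Q$ of $a^{n}-b^{n}$ (so $Q\equiv 1\pmod n$ and $Q\mid\frac{x^{n}-y^{n}}{x-y}\mid R=\phi(N)$), and compare $v_{Q}(\phi(N))$ with $v_{Q}(R)$. This forces either $Q^{2}\mid z$ --- whence $Q^{2}\le z\le x-y$, so $n<Q\le\sqrt{x}$, and then \eqref{star} together with $m<x^{n}$ bounds $x$, $m$, $n$ outright --- or else some prime factor of $N$ lies in the class $1\pmod Q$. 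Iterating such a ``one prime of $N$ forces a strictly larger one'' step, and confronting the resulting chain with the ceiling $x^{m}$ on the prime factors of $N$, the squarefreeness of $\frac{x^{m}-y^{m}}{x-y}$, and the divisibility $m\mid R<x^{n}$, should collapse the problem to a finite set of quintuples $(x,y,z,m,n)$. I expect this to be the main obstacle: the size inequalities \eqref{ratio}--\eqref{star} are soft, but taming the near-diagonal case $m\approx n$ appears to require the combinatorial structure of the factorization of $N$ together with a sieve-type bound on how often $\frac{x^{m}-y^{m}}{x-y}$ can be as rich in small prime factors as \eqref{ratio} demands.

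It then remains to clear the finite debris: the Zsygmondy exceptions (essentially $m\in\{1,2,3,6\}$, and likewise for $n$, with $(a,b)$ from the short exceptional list), the case $m=2$ --- where $n=1$ is forced and the equation reads $\phi\bigl(z(x+y)\bigr)=z$, handled by $\phi(x+y)\mid\phi\bigl(z(x+y)\bigr)=z\le x-y$ together with the estimates above --- the case $n=1$ in general (where \eqref{ratio} becomes $\prod_{p\mid N}(1-1/p)=(x-y)/(x^{m}-y^{m})$, again impossible unless $x$ and $m$ are small), and the finitely many quintuples surviving the previous step. This proves Theorem~\ref{thm3}, and Theorems~\ref{thm2}, \ref{thm0} and \ref{thm1} then follow by the reductions already explained.
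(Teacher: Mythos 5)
Your opening moves are sound and coincide with the paper's: the identity $\phi(N)/N=(x^{n}-y^{n})/(x^{m}-y^{m})$ together with $N\le x^{m}-y^{m}$ (which is indeed where $z\le x-y$ enters) gives $x\le x^{m-n}<\prod_{p\mid N}\bigl(1-1/p\bigr)^{-1}$, which is exactly the paper's Lemma \ref{lem2.1}; the use of a primitive prime divisor of $x^{m}-y^{m}$ to get $m\mid\phi(N)=R<x^{n}$ is also correct, as is the squarefreeness remark. But you have explicitly left open what you yourself call ``the main obstacle'': the range where $m$ and $n$ are both enormous compared with $x$ (e.g.\ $x=3$, $z\le 2$, $m=n+1$ huge), where your inequality \eqref{star} only says $m>e^{cx}$ and bounds nothing. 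Your proposed remedy --- iterating ``one prime of $N$ forces a strictly larger one'' and hoping the chain collides with the ceiling $x^{m}$ --- is not an argument: a single chain of primes $p_{1}<p_{2}<\cdots$ inside $[7,x^{m}]$ can be very long, and nothing you have written converts it into the quantitative statement you need, namely that $\sum_{p\mid N}1/p$ cannot reach $\log x$.

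This is precisely the part the paper spends all its machinery on, and none of that machinery appears in your proposal. Concretely, the paper (i) applies Carmichael's primitive divisor theorem not just to $x_{1}^{m}-y_{1}^{m}$ but to $x_{1}^{d}-y_{1}^{d}$ for \emph{every} divisor $d$ of $m$ simultaneously, which yields $q^{\frac12\alpha_{q}d(m)-1}\mid x_{1}^{q-1}-y_{1}^{q-1}$ for each prime $q\mid m$ (Lemma \ref{lem3a}) and hence the crucial bound $d(m)<2\max\{p(m),x\}$ (Lemma \ref{lem3}); (ii) controls $\sum_{\ell_{p}=d}1/p$ for each divisor $d$ of $m$ by the Brun--Titchmarsh theorem (Lemma \ref{lem2.1a}), using that every such $p$ satisfies $p\equiv 1\pmod d$; and (iii) combines these with explicit numerics, splitting into $x>80$ versus $x\le 80$ and $p(m)\le x$ versus $p(m)>x$, plus a separate parity analysis showing $2\nmid m$ and finite computer checks for $x\le 80$. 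Without (i) and (ii) --- a bound on the number of admissible $d$ and a sieve bound on the reciprocal sum of primes with a given order $d$ --- the inequality $\log x<\sum_{p\mid N}1/p+O(1)$ cannot be contradicted, so the proof as proposed does not close.
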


Now we prove that Theorem \ref{thm2} and Theorem \ref{thm3} are
equivalent each other. It is clear that Theorem \ref{thm2} implies
Theorem \ref{thm3}.

Suppose that Theorem \ref{thm3} is true and $(x, y, z, m, n)$ is a
nontrivial solution of the equation \eqref{eqn1a} in positive
integers $x, y, z, m, n$ with $1\le z\le x-y$. Then $m>n\ge 1$.
Let $$\gcd (m,n)=d',\quad  m=d'm',\quad n=d'n',\quad
x^{d'}=x',\quad y^{d'}=y'$$ and
$$z\frac{x^{d'}-y^{d'}}{x-y} =z'.$$
Then $\gcd (m', n')=1$, $m'>n'\ge 1$,
\begin{equation}\label{ab}\phi
\left( z'\frac{(x')^{m'}-(y')^{m'}}{x'-y'}\right)
=z'\frac{(x')^{n'}-(y')^{n'}}{x'-y'}\end{equation} and
$$1\le z'=z\frac{x^{d'}-y^{d'}}{x-y}\le x^{d'}-y^{d'}=x'-y'.$$
Thus $(x', y', z', m', n')$ is a nontrivial solution of the
equation \eqref{eqn1a} in positive integers $x', y', z', m', n'$
with $1\le z'\le x'-y'$ and $\gcd (m', n')=1$. This contradicts
Theorem \ref{thm3}. Now we have proved that Theorem \ref{thm2} and
Theorem \ref{thm3} are equivalent each other.

In the following, our task is to prove Theorem \ref{thm3}. One may
see that the condition $\gcd (m, n)=1$ plays a key role in our
proof.

{\it From now on, we always assume that $(x, y, z, m, n)$ is a
nontrivial solution of the  equation \eqref{eqn1a} in positive
integers $x, y, z, m, n$ with $1\le z\le x-y$ and $\gcd (m, n)=1$.
Since $(x, y, z, m, n)$ is a nontrivial solution of the equation
\eqref{eqn1a}, it follows that $m>n$. Let
$$\gcd (x, y)=d_1,\quad x=x_1d_1,\quad y=y_1d_1.$$
In this paper, $p$, $q$, $r$ and $\gamma$ always denote odd
primes. Let $p(m)$ be the least prime divisor of $m$. For each
prime $p\ge 3$ with $p\nmid x_1y_1$, let $\ell_p$ be the least
positive integer $\ell $
  such that $p\mid x_1^\ell -y_1^\ell $.
Then $p\mid x_1^m-y_1^m$ if and only if $\ell_p \mid m$. For the
convenience of the reader, we will repeat some statements in the
proof.}

 In Section
\ref{seca}, we solve the  equation \eqref{eqn1a} in positive
integers $x, y, z, m, n$ with $x_1$ and $y_1$ having different
parities and no constrains on the size of $z$. It follows that
Theorem \ref{thm3} is true for $x_1$ and $y_1$ having different
parities. Since $\gcd (x_1, y_1)=1$, we may assume that $x_1$ and
$y_1$ are both odd. In Section \ref{sec1}, we  give the
preliminary lemmas. We prove Theorem \ref{thm3} in two sections.
In Section \ref{sec2}, we prove Theorem \ref{thm3} for $x>80$. For
this, we divide into two subsections: $p(m)\le x$ and $p(m)>x$. In
Section \ref{sec3}, we prove Theorem \ref{thm3} for $x\le 80$.

\section{The equation without constrains on $z$ }\label{seca}

For any prime $p$ and any positive integer $a$, let $\nu_p (a)$
denote the integer $k$ with
$$p^k \mid a,\quad p^{k+1} \nmid a.$$

\begin{theorem}\label{thm2a} The only nontrivial solutions of the equation
\eqref{eqn1a}  in positive integers $x, y, z, m, n$ with $\nu_2
(x)\not= \nu_2(y)$ are $$(x, y, z, m , n)=(2, 1, 2^\beta p^u, q,
q-1),$$ where $q, p=2^q-1$  are both primes and $\beta$, $u$ are
two integers with $\beta \ge 1$ and $u\ge 0$.\end{theorem}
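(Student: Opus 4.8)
The plan is to reduce \eqref{eqn1a} to an equation $\phi(N)=M$ with $N=zL_m$ and $M=zL_n$, where I write $L_k:=(x^k-y^k)/(x-y)=x^{k-1}+x^{k-2}y+\cdots+y^{k-1}$, and then to pin down the shape of $N$ by a $2$-adic valuation count before reading off $x,y,m,n$ from elementary size bounds on $L_k$. Since the solution is nontrivial, $m>n\ge 1$, so $m\ge 2$ and $L_m>L_n$ (the $L_k$ are strictly increasing because $x\ge 2$), whence $N>M$. Setting $s:=\min\{\nu_2(x),\nu_2(y)\}$, the hypothesis $\nu_2(x)\ne\nu_2(y)$ gives $\nu_2(x^k)=k\nu_2(x)\ne k\nu_2(y)=\nu_2(y^k)$, hence $\nu_2(x^k-y^k)=ks$ for all $k\ge1$, so $\nu_2(L_k)=(k-1)s$ and $\nu_2(N)=\nu_2(z)+(m-1)s$, $\nu_2(M)=\nu_2(z)+(n-1)s$. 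Note that no upper bound on $z$ is used anywhere in what follows.

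Next I would run the $2$-adic count. First $\nu_2(N)\ge1$: otherwise $N$ is odd, so $\phi(N)$ is odd, forcing $N=1$, i.e. $z=L_m=1$ and $m=1$, against $m\ge2$. Writing $N=2^{\nu_2(N)}N'$ with $N'$ odd, one has $\nu_2(\phi(N))=\nu_2(N)-1+\sum_{p\mid N'}\nu_2(p-1)$; equating this with $\nu_2(M)$ and using the two valuation formulas yields $\varpi(N)\le 1-(m-n)s$, where $\varpi(N)$ denotes the number of distinct odd primes dividing $N$. If $s\ge1$ then $\varpi(N)=0$, so $N$ is a power of $2$ and $L_m/L_n=N/M=2$; but $L_{k+1}=xL_k+y^k>xL_k$ gives $L_m/L_n\ge L_{n+1}/L_n>x\ge2$, a contradiction. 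Hence $s=0$, so $\nu_2(N)=\nu_2(M)=\nu_2(z)\ge1$ and $\varpi(N)\le1$; the case $\varpi(N)=0$ is excluded exactly as above, so $\varpi(N)=1$, i.e. $N=2^{a}p^{c}$ with $p$ an odd prime, $c\ge1$, $a=\nu_2(z)\ge1$, and then $\nu_2(\phi(N))=\nu_2(M)$ forces $\nu_2(p-1)=1$, i.e. $p\equiv 3\pmod4$.

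Then I would extract the Lucas relations. Since $s=0$, $L_m$ is odd and $\nu_2(z)=a$; writing $z=2^{a}z'$ with $z'$ odd, the identity $N=zL_m$ gives $z'L_m=p^{c}$, hence $L_m=p^{j}$ and $z'=p^{c-j}$ for some $0\le j\le c$. Feeding $z=2^{a}p^{c-j}$ into $\phi(N)=2^{a-1}p^{c-1}(p-1)=M=zL_n$ and comparing $p$-adic valuations gives $j\ge1$ and $L_n=p^{j-1}(p-1)/2$, so $L_m/L_n=2p/(p-1)\le3$ as $p\ge3$. Combining with $L_m/L_n\ge L_{n+1}/L_n>x$ forces $x<3$, so $x=2$ and $y=1$; then $L_k=2^{k}-1$, and $(2^{m}-1)/(2^{n}-1)=2p/(p-1)\le3$ forces $m=n+1$, after which $2+\frac1{2^{n}-1}=2+\frac2{p-1}$ gives $p=2^{m}-1$, hence $j=1$ and $z=2^{a}p^{c-1}$. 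Finally $2^{m}-1=p$ prime forces $m$ prime; writing $m=q$ we get $n=q-1$ and the family $(x,y,z,m,n)=(2,1,2^{\beta}p^{u},q,q-1)$ with $q$ and $p=2^{q}-1$ prime, $\beta:=a\ge1$, $u:=c-1\ge0$. The converse is a direct check: $\phi(2^{\beta}p^{u+1})=2^{\beta-1}p^{u}(p-1)=2^{\beta}p^{u}(2^{q-1}-1)=2^{\beta}p^{u}L_{q-1}$.

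The main obstacle is the $2$-adic count of the second paragraph: the whole argument rests on showing that $\phi(N)=M$ together with $\nu_2(x)\ne\nu_2(y)$ squeezes $N$ into the rigid shape $2^{a}p^{c}$ with $p\equiv3\pmod4$. The delicate points there are correctly accounting for the $-1$ from $\phi(2^{a})=2^{a-1}$ and treating the cases $s\ge1$ and $s=0$ uniformly; once this is in hand, only the elementary inequalities $L_{k+1}>xL_k$ and $2p/(p-1)\le3$ are needed, with no appeal to sieves or primitive divisors.
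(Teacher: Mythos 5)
Your proof is correct; I checked each step and found no gaps. It follows the same broad strategy as the paper (a $2$-adic count forces $z(x^m-y^m)/(x-y)$ to be $2^a p^c$, then a size comparison of $(x^m-y^m)/(x^n-y^n)$ against $x$ pins down $x=2$, $y=1$, $m=n+1$), but your execution differs in two substantive ways. First, the paper begins by reducing to $\gcd(m,n)=1$ and by factoring out $d_1=\gcd(x,y)$; it then needs the coprimality in the subcase where $p$ divides $(x_1^n-y_1^n)/(x_1-y_1)$, deriving $p\mid x_1-y_1$ and, via the congruences $(x_1^k-y_1^k)/(x_1-y_1)\equiv ky_1^{k-1}\pmod p$, the contradiction $p\mid m$ and $p\mid n$. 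You bypass both reductions entirely: your inequality $\varpi(N)\le 1-(m-n)s$ simultaneously rules out $x,y$ both even and bounds the number of odd primes of $N$, and writing $L_m=p^{j}$, $L_n=p^{j-1}(p-1)/2$ lets you compute the exact ratio $L_m/L_n=2p/(p-1)$ without caring whether $j\ge 2$; the possibility $j\ge2$ is eliminated only at the very end, once $p=2^m-1$ is known. This makes your argument self-contained for arbitrary $m,n$ and slightly shorter, at the cost of nothing; the paper's version is organized around the $\gcd(m,n)=1$ reduction because that hypothesis is essential elsewhere in the paper (Theorem 2.2), not because Theorem 3.1 needs it. Second, your closing size bound uses $L_{k+1}=xL_k+y^k>xL_k$, whereas the paper runs the chain $L_m-1\ge L_{n+1}-1\ge xL_n\ge 2L_n$ starting from $L_m-1=2L_n$; these are the same estimate in different clothing.
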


\begin{proof} Suppose that $(x, y, z, m ,
n)$ is a  nontrivial solution of the equation \eqref{eqn1a}  in
positive integers $x, y, z, m, n$. Then $m>n\ge 1$. As in the
previous section, first we reduce the problem to the case $\gcd
(m, n)=1$.

Let
$$\gcd (m,n)=d',\quad m=d'm',\quad n=d'n',\quad x^{d'}=x',\quad
y^{d'}=y'$$ and
$$z\frac{x^{d'}-y^{d'}}{x-y} =z'.$$
Then
\begin{equation}\label{ab}\phi
\left( z'\frac{(x')^{m'}-(y')^{m'}}{x'-y'}\right)
=z'\frac{(x')^{n'}-(y')^{n'}}{x'-y'}\end{equation} and $\nu_2
(x')=d'\nu_2(x)\not= d'\nu_2 (y)=\nu_2 (y')$. Suppose that the
only nontrivial solutions of the equation \eqref{ab}  in positive
integers $x', y', z', m', n'$ with $\gcd (m', n')=1$ are $$(x',
y', z', m' , n')=(2, 1, 2^\beta p^u, q, q-1),$$ where $q, p=2^q-1$
are both primes and $\beta$, $u$ are nonnegative integers with
$\beta \ge 1$. Then $x^{d'}=x'=2$. So $d'=1$ and $x=2$. Thus the
only nontrivial solutions of the equation \eqref{eqn1a}  in
positive integers $x, y, z, m, n$ are $$(x, y, z, m , n)=(2, 1,
2^\beta p^u, q, q-1),$$ where $q, p=2^q-1$ are both primes and
$\beta$, $u$ are nonnegative integers with $\beta \ge 1$. Now we
have reduced the problem to the case $\gcd (m, n)=1$. Without loss
of generality, we may assume that $\gcd (m, n)=1$.

Let
$$\gcd (x, y)=d_1,\quad x=x_1d_1,\quad y=y_1d_1.$$
 Now
\eqref{eqn1a} becomes
\begin{equation}\label{eqw1}\phi
\left( zd_1^{m-1} \frac{x_1^m-y_1^m}{x_1-y_1}\right) = zd_1^{n-1}
\frac{x_1^n-y_1^n}{x_1-y_1}.\end{equation} Since $\nu_2 (x)\not=
\nu_2(y)$, it follows that $x_1$ and $y_1$ have different
parities. Let $$ z=2^\beta t_1,\quad d_1=2^\alpha w_1,\quad 2\nmid
t_1w_1.$$ Now \eqref{eqw1} becomes
\begin{equation*}\phi
\left( 2^{\beta +\alpha (m-1)}t_1 w_1^{m-1}
\frac{x_1^m-y_1^m}{x_1-y_1}\right) =2^{\beta +\alpha (n-1)}t_1
w_1^{n-1} \frac{x_1^n-y_1^n}{x_1-y_1}.\end{equation*} Since $x_1$
and $y_1$ have different parities, it follows that
$$\frac{x_1^m-y_1^m}{x_1-y_1}$$
and
$$\frac{x_1^n-y_1^n}{x_1-y_1}$$
are both odd. Let
\begin{equation}\label{ab1xx}A=t_1 w_1^{m-1}
\frac{x_1^m-y_1^m}{x_1-y_1},\quad B=t_1 w_1^{n-1}
\frac{x_1^n-y_1^n}{x_1-y_1}.\end{equation}  Then $A$ and $B$ are
both odd and \begin{equation}\label{ab1}\phi \left( 2^{\beta
+\alpha (m-1)}A\right) =2^{\beta +\alpha (n-1)}B.\end{equation} By
$m>n\ge 1$ and \eqref{ab1xx}, we have $$A=t_1 w_1^{m-1}
\frac{x_1^m-y_1^m}{x_1-y_1}\ge \frac{x_1^m-y_1^m}{x_1-y_1}>1 .$$
Noting that $A$ is odd, $\phi (A)$ is even. We divide into two
cases:

{\bf Case 1:} $\alpha =\beta =0$. Then \eqref{ab1} becomes $\phi
(A)=B$. Since $A$ and $B$ are both odd, it follows that $A=B=1$, a
contradiction with $A>1$.

{\bf Case 2:} $\alpha +\beta >0$. Then \eqref{ab1} becomes
\begin{equation}\label{ab1x}2^{\beta +\alpha (m-1)-1} \phi (A)=2^{\beta +\alpha (n-1)}B.\end{equation}
Since $2\mid \phi (A)$ and $B$ is odd, it follows that
$$\beta +\alpha (m-1)\le\beta +\alpha (n-1).$$
Noting that $m>n$, we have $\alpha =0$. Thus $\beta =\alpha +\beta
\ge 1$ and \eqref{ab1x} becomes $\phi (A)=2B$. Hence there exist
an odd prime $p$ and a positive integer $t$ such that
$$A=p^t, \quad 2B=p^{t-1} (p-1).$$
By $$p^t=A=t_1 w_1^{m-1} \frac{x_1^m-y_1^m}{x_1-y_1}, \quad
p^{t-1} (p-1)=2B=2t_1 w_1^{n-1} \frac{x_1^n-y_1^n}{x_1-y_1}, $$
there exist nonnegative integers $u,v, k$ such that
$$t_1=p^u, \quad w_1=p^v, \quad  \frac{x_1^m-y_1^m}{x_1-y_1}=p^k,\quad t=u+(m-1)v+k,$$
$$ p^{t-1-u-(n-1)v} (p-1)=2 \frac{x_1^n-y_1^n}{x_1-y_1}.$$ By $m>n\ge
1$, we have $k\ge 1$ and $t-1-u-(n-1)v\ge 0$. So $p\mid
x_1^m-y_1^m$.    If $$t-1-u-(n-1)v\ge 1,$$ then
$$p\mid x_1^n-y_1^n.$$
Thus
$$p\mid x_1^{(m,n)}-y_1^{(m,n)}.$$
That is, $p\mid x_1-y_1$. By $\gcd (x_1, y_1)=1$, $p\nmid x_1y_1$.
Since
\begin{eqnarray*}p^k&=&\frac{x_1^m-y_1^m}{x_1-y_1}\\
&=&x_1^{m-1}+x_1^{m-2}y_1+\cdots +y_1^{m-1}\\
&\equiv & my_1^{m-1} \pmod p\end{eqnarray*} and
\begin{eqnarray*}&&p^{t-1-u-(n-1)v}(p-1)\\
&=&2\frac{x_1^n-y_1^n}{x_1-y_1}\\
&=&2(x_1^{n-1}+x_1^{n-2}y_1+\cdots +y_1^{n-1})\\
&\equiv & 2 ny_1^{n-1} \pmod p,\end{eqnarray*} it follows that
$p\mid m$ and $p\mid n$, a contradiction with $\gcd (m, n)=1$. So
$$t-1-u-(n-1)v=0.$$ Noting that $t=u+(m-1)v+k$, we have
$$u+(m-1)v+k-1-u-(n-1)v=0.$$
That is, $(m-n)v+k-1=0$. Since $m>n$ and $k\ge 1$, it follows that
$v=0$ and $k=1$. Thus $d_1=2^\alpha w_1=1$,
$$p=\frac{x_1^m-y_1^m}{x_1-y_1}=\frac{x^m-y^m}{x-y},$$
and $$ p-1=2\frac{x_1^n-y_1^n}{x_1-y_1}=2\frac{x^n-y^n}{x-y}.$$ It
follows that
\begin{equation*}\frac{x^m-y^m}{x-y} -1=2 \frac{x^n-y^n}{x-y}.\end{equation*}
By $m>n$ and $x>y\ge 1$, we have
\begin{eqnarray*}2\frac{x^n-y^n}{x-y}&=&\frac{x^m-y^m}{x-y} -1\\
&=&y^m
\frac{(x/y)^m-1}{x-y}-1\\
&\ge & y^{n+1}
\frac{(x/y)^{n+1}-1}{x-y}-1\\
&= & \frac{x^{n+1}-y^{n+1}}{x-y}-1\\
&=& x^n+x^{n-1}y+\cdots +xy^{n-1}+y^n-1\\
&\ge & x^n+x^{n-1}y+\cdots +xy^{n-1}\\
&=& x\frac{x^n-y^n}{x-y}\\
&\ge & 2 \frac{x^n-y^n}{x-y}.
\end{eqnarray*}
It follows  that $x=2$, $y=1$ and $m=n+1$. Since
$$p=\frac{x^m-y^m}{x-y}=2^m-1$$
is a prime, it follows that $m$ is a prime. Write $m=q$. Then
$$(x, y, z, m , n)=(2, 1, 2^\beta p^u, q, q-1),$$ where $q,
p=2^q-1$  are both primes and $\beta$, $u$ are nonnegative
integers with $\beta \ge 1$. It is easy to verify that these are
solutions of the equation \eqref{eqn1a}. This completes the proof
of Theorem \ref{thm2a}.
\end{proof}

\section{Preliminary Lemmas}\label{sec1}

In this section, we give some preliminary lemmas. We always assume
that $(x, y, z, m, n)$ is a nontrivial solution of the  equation
\eqref{eqn1a} in positive integers $x, y, z, m, n$ with $1\le z\le
x-y$ and $\gcd (m, n)=1$. Then $m>n\ge 1$. Recall that
$$\gcd (x, y)=d_1,\quad x=x_1d_1,\quad y=y_1d_1.$$
If $x_1$ and $ y_1$ have different parities, then $\nu_2
(x)\not=\nu_2 (y)$. By Theorem \ref{thm2a}, there exist two primes
$q, p=2^q-1$ and two integers $\beta \ge 1$ and $u\ge 0$ such that
$(x, y, z, m , n)=(2, 1, 2^\beta p^u, q, q-1)$. This contradicts
$1\le z\le x-y$. Hence $x_1$ and $ y_1$ have the same parity.
Since $\gcd (x_1, y_1)=1$, it follows that $x_1$ and $ y_1$ are
both odd. Noting that $x_1>y_1\ge 1$, we have $x_1\ge 3$.

\begin{lemma}\label{lem1} Let $(x, y, z, m, n)$ be a
nontrivial solution of the  equation \eqref{eqn1a} in positive
integers $x, y, z, m, n$ with $1\le z\le x-y$ and $\gcd (m, n)=1$.
Then $2\nmid m$.
\end{lemma}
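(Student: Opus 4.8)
The plan is to argue by contradiction, so suppose $2\mid m$. Since $\gcd(m,n)=1$, the integer $n$ is then odd. Recall that at this point we already know $x_1$ and $y_1$ are both odd and $x_1\ge 3$. Exactly as in the proof of Theorem \ref{thm2a}, I would write $z=2^\beta t_1$ and $d_1=2^\alpha w_1$ with $2\nmid t_1w_1$, so that \eqref{eqn1a} becomes
$$\phi\!\left(2^{\beta+\alpha(m-1)}\,t_1w_1^{m-1}L\right)=2^{\beta+\alpha(n-1)}\,t_1w_1^{n-1}R,$$
where $L=(x_1^m-y_1^m)/(x_1-y_1)$ and $R=(x_1^n-y_1^n)/(x_1-y_1)$.

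The first observation is a parity count. Since $x_1,y_1$ are odd, $L=x_1^{m-1}+x_1^{m-2}y_1+\cdots+y_1^{m-1}$ is a sum of $m$ odd terms, and $m$ is even, so $L$ is even; put $s=\nu_2(L)\ge 1$, so that $L/2^s$ is an odd integer. Likewise $R$ is a sum of $n$ odd terms with $n$ odd, so $R$ is odd. Finally, $L\ge x_1^{m-1}\ge x_1\ge 3$, a lower bound I will use at the end.

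The main step is to compare the $2$-adic valuations of the two sides. On the right, $t_1$, $w_1$, $R$ are all odd, so the valuation is exactly $\beta+\alpha(n-1)$. On the left, write the argument of $\phi$ as $2^kM$ with $M=t_1w_1^{m-1}(L/2^s)$ odd and $k=\beta+\alpha(m-1)+s\ge 1$; since $k\ge 1$ one has $\nu_2\big(\phi(2^kM)\big)=k-1+\nu_2(\phi(M))$. Equating the two valuations and cancelling the common $\beta$ gives
$$\alpha(m-n)+(s-1)+\nu_2(\phi(M))=0.$$
Here every summand is nonnegative ($m>n$ and $s\ge 1$), so each must vanish: $\alpha=0$, $s=1$, and $\phi(M)$ is odd, which forces $M=1$ since $M$ is odd. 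Then $M=t_1w_1^{m-1}(L/2^s)=1$ forces $t_1=w_1=L/2^s=1$, hence $d_1=2^\alpha w_1=1$ and $L=2^s=2$, contradicting $L\ge 3$. Therefore $2\nmid m$.

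The only part that needs genuine care is the valuation bookkeeping in the third paragraph — in particular checking that $k\ge 1$ (so that the formula $\nu_2(\phi(2^kM))=k-1+\nu_2(\phi(M))$ applies) and that $R$ together with the cofactors $t_1,w_1$ are genuinely odd. This is where an off-by-one could slip in, but nothing here is deep; the parity counts and the concluding size estimate $L\ge 3$ are immediate.
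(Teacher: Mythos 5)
Your proof is correct and follows essentially the same route as the paper: the same $2$-adic decomposition of $z$ and $d_1$, the observation that $R$ is odd while $L$ is even, and a comparison of $2$-adic valuations of the two sides of \eqref{eqn1a} yielding a contradiction with $m>n$. The only cosmetic difference is that the paper lower-bounds $\nu_2$ of the left side using $A'>2$ and contradicts $m>n$ directly, whereas you equate valuations exactly and finish by ruling out $M=1$ via $L\ge 3$; these use the same underlying fact at different points.
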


\begin{proof} We prove the lemma by a contradiction. Suppose
that $2\mid m$. Let $$m=2^\alpha m_1,\quad d_1=2^\beta w_1, \quad
z=2^\delta t_1,$$ where $\alpha , \beta , \delta , m_1, w_1 ,
t_1$ are nonnegative integers with $2\nmid m_1w_1t_1$ and $\alpha
\ge 1$. Let
$$A'=t_1 w_1^{m-1}\frac{x_1^m-y_1^m}{x_1-y_1}, \quad B'=t_1
w_1^{n-1}\frac{x_1^n-y_1^n}{x_1-y_1}.$$ Since $2\mid m$, we have
$2\nmid n$. It follows that $B'$ is odd. Since  $x_1$ and $y_1$
are odd,  $$A'=t_1 w_1^{m-1}\frac{x_1^m-y_1^m}{x_1-y_1}=t_1
w_1^{m-1}\frac{x_1^m-y_1^m}{x_1^2-y_1^2} (x_1+y_1)$$ is even and
more than $2$. Let $A'=2^\mu A_1$ with $2\nmid A_1$ and $\mu\ge
1$. By $A'>2$, we have either $\mu \ge 2$ or $A_1\ge 3$. Hence
$$\phi \left(
z\frac{x^m-y^m}{x-y}\right)=\phi ( 2^{\delta + \beta (m-1)+\mu}
A_1)=2^{\delta + \beta (m-1)+\mu -1} \phi (A_1)$$ is divisible by
$$2^{\delta + \beta (m-1)+1}.$$
By \eqref{eqn1a} and
$$z\frac{x^n-y^n}{x-y}= 2^{\delta + \beta (n-1)} B',$$
we have $\delta + \beta (m-1)+1\le \delta + \beta (n-1)$, a
contradiction with $m>n$.

Therefore, $2\nmid m$. This completes the proof of Lemma
\ref{lem1}.
\end{proof}

\begin{lemma}\label{lem3a} Let $q$ be a prime
divisor of $m$.  Then
$$q^{\frac 12 \alpha_q d(m)-1}\mid
x_1^{q-1}-y_1^{q-1},$$
 where $d(m)$ is the number of positive
divisors of $m$ and  $\alpha_q$ is the integer with $q^{\alpha_q}
\mid m$ and $q^{\alpha_q +1} \nmid m$.

 Furthermore,
if $q\nmid z$, then
$$q^{\frac 12 \alpha_q d(m)}\mid
x_1^{q-1}-y_1^{q-1}. $$
\end{lemma}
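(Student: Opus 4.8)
\noindent The plan is to compare the exponents of the prime $q$ on the two sides of \eqref{eqn1a}. Set $N=z\frac{x^m-y^m}{x-y}$, so that \eqref{eqn1a} reads $\phi(N)=z\frac{x^n-y^n}{x-y}$; I will bound $\nu_q(\phi(N))$ from below and bound $\nu_q\bigl(z\frac{x^n-y^n}{x-y}\bigr)$ from above. Recall $2\nmid m$ by Lemma~\ref{lem1}, so $q\ge 3$; write $m=q^{\alpha_q}m_2$ with $q\nmid m_2$, so that $d(m)=(\alpha_q+1)d(m_2)$.

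\smallskip

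\noindent For the lower bound, from $\phi(N)=\prod_{p^a\,\|\,N}p^{a-1}(p-1)$ one has $\nu_q(\phi(N))=\max(\nu_q(N)-1,0)+\sum_{p\mid N}\nu_q(p-1)$, so it suffices to make the last sum large. For every divisor $d$ of $m$ with $q\mid d$ --- which is odd and satisfies $d\ge q\ge 3$ --- Zsygmondy's theorem produces a primitive prime divisor $p_d$ of $x_1^{d}-y_1^{d}$; the exceptional cases $d\in\{1,2\}$ and $(x_1,y_1,d)=(2,1,6)$ cannot occur, since $d$ is odd, $d\ge 3$, and $x_1\ge 3$. Distinct $d$ produce distinct primes $p_d$, each $p_d$ divides $\frac{x_1^{d}-y_1^{d}}{x_1-y_1}\mid\frac{x_1^{m}-y_1^{m}}{x_1-y_1}\mid\frac{x^m-y^m}{x-y}\mid N$, and $\ell_{p_d}=d$, so $d\mid p_d-1$; in particular $q\mid p_d-1$, whence $p_d\neq q$ and $\nu_q(p_d-1)\ge\nu_q(d)$. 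Therefore
\[
\nu_q(\phi(N))\ \ge\ \max\bigl(\nu_q(N)-1,\,0\bigr)+\sum_{d\mid m,\ q\mid d}\nu_q(d)\ =\ \max\bigl(\nu_q(N)-1,\,0\bigr)+\tfrac12\,\alpha_q\,d(m),
\]
where the identity comes from summing $\#\{d\mid m:\ q^{j}\mid d\}=d(m/q^{j})=(\alpha_q-j+1)d(m_2)$ over $1\le j\le\alpha_q$. This is the step that produces the exponent $\tfrac12\alpha_q d(m)$: a primitive prime of $x_1^{d}-y_1^{d}$ contributes $\nu_q(d)$, not merely $1$, to $\nu_q(\phi(N))$.

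\smallskip

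\noindent For the upper bound, write $\frac{x^k-y^k}{x-y}=d_1^{\,k-1}\frac{x_1^k-y_1^k}{x_1-y_1}$, so that $\nu_q\bigl(z\frac{x^n-y^n}{x-y}\bigr)=\nu_q(z)+(n-1)\nu_q(d_1)+\nu_q\bigl(\frac{x_1^n-y_1^n}{x_1-y_1}\bigr)$. A short $q$-adic computation --- treating $q\mid x_1y_1$ trivially, and otherwise using $q\nmid n$ (because $\gcd(m,n)=1$ and $q\mid m$), Fermat's little theorem, and the lifting-the-exponent lemma --- gives $\nu_q\bigl(\frac{x_1^n-y_1^n}{x_1-y_1}\bigr)\le s:=\nu_q(x_1^{q-1}-y_1^{q-1})$; indeed $\nu_q(x_1^k-y_1^k)$ vanishes unless $\ell_q\mid k$, in which case it equals $\nu_q(x_1^{\ell_q}-y_1^{\ell_q})=s$ (as $q\nmid n$), while $\nu_q(x_1-y_1)$ equals $\nu_q(x_1^{\ell_q}-y_1^{\ell_q})$ if $\ell_q=1$ and $0$ otherwise. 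Since $\nu_q(N)=\nu_q(z)+(m-1)\nu_q(d_1)+\nu_q\bigl(\frac{x_1^m-y_1^m}{x_1-y_1}\bigr)\ge\nu_q(z)+(m-1)\nu_q(d_1)$ and $m>n\ge 1$, we get $\max(\nu_q(N)-1,0)\ge\nu_q(z)+(n-1)\nu_q(d_1)-1$. Substituting these bounds into $\nu_q(\phi(N))=\nu_q\bigl(z\frac{x^n-y^n}{x-y}\bigr)$ gives
\[
\nu_q(z)+(n-1)\nu_q(d_1)-1+\tfrac12\,\alpha_q\,d(m)\ \le\ \nu_q(z)+(n-1)\nu_q(d_1)+s,
\]
i.e.\ $s\ge\tfrac12\alpha_q d(m)-1$, which is the first claim. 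If moreover $q\nmid z$, then $\nu_q(z)=0$, and distinguishing $\nu_q(d_1)=0$ from $\nu_q(d_1)\ge 1$ (in the latter case $\nu_q(N)\ge(m-1)\nu_q(d_1)\ge m-1\ge 2$ and $m-n\ge 1$) improves the last bound to $\max(\nu_q(N)-1,0)\ge(n-1)\nu_q(d_1)$, which upgrades the conclusion to $s\ge\tfrac12\alpha_q d(m)$.

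\smallskip

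\noindent I do not expect a genuine obstacle here; the three points requiring care are: that Zsygmondy's theorem applies to every divisor $d\mid m$ with $q\mid d$ (immediate, as such $d$ are odd, $\ge 3$, and $x_1\ge 3$); the divisor-sum identity, which is what converts ``$\ge 1$ per level'' into the exponent $\tfrac12\alpha_q d(m)$; and the inequality $\nu_q\bigl(\frac{x_1^n-y_1^n}{x_1-y_1}\bigr)\le\nu_q(x_1^{q-1}-y_1^{q-1})$, which is where the hypothesis $\gcd(m,n)=1$ enters essentially.
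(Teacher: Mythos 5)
Your proof is correct and takes essentially the same approach as the paper's: you extract a primitive prime divisor $p_d$ of $x_1^{d}-y_1^{d}$ for each divisor $d\mid m$ with $q\mid d$, observe that $q^{\nu_q(d)}\mid p_d-1$, and sum over such $d$ to obtain the exponent $\tfrac12\alpha_q d(m)$ on the $\phi$ side, exactly as in the paper's use of Carmichael's primitive divisor theorem for the divisors $q^i l_j$. The only (harmless) divergence is the endgame: the paper passes from $q^K\mid x_1^n-y_1^n$ to $q^K\mid x_1^{q-1}-y_1^{q-1}$ via Euler's theorem and a gcd of exponents, and handles the cases $q\mid d_1$, $q\mid z$ separately, whereas you bound $\nu_q\bigl(\frac{x_1^n-y_1^n}{x_1-y_1}\bigr)$ directly by lifting-the-exponent and keep uniform $q$-adic bookkeeping; both arguments hinge on $q\nmid n$ coming from $\gcd(m,n)=1$.
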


\begin{proof} Let $m=q^{\alpha_q} m_q$ and let $l_1, l_2, \dots , l_t$ be all positive divisors
of $m_q$. Then $q^i l_j$ $(1\le i\le \alpha_q , 1\le j\le t)$ are
$\alpha_q t$ distinct positive divisors of $m$. By Lemma
\ref{lem1}, $q^il_j\not= 2, 6$. By Carmichael's primitive divisor
theorem (see \cite{Carmichael1913}), each of $x_1^{q^i
l_j}-y_1^{q^i l_j}$
 has a primitive prime divisor $p_{i,j}\equiv 1\pmod{q^i
l_j}$. It is clear that
$$p_{i,j}\mid \frac{x_1^{q^i
l_j}-y_1^{q^i l_j}}{x_1-y_1},\quad \frac{x_1^{q^i l_j}-y_1^{q^i
l_j}}{x_1-y_1}\mid \frac{x_1^m-y_1^m}{x_1-y_1}.$$ It follows that
$$\prod_{\substack{1\le i\le \alpha_q\\ 1\le j\le t}} p_{i,j} \mid
\frac{x_1^m-y_1^m}{x_1-y_1}.$$ Let
$$z=q^{\delta_q} t_q,\quad d_1=q^{\beta_q} d_q,\quad  q\nmid t_q d_q.$$
Noting that $$z\frac{x^m-y^m}{x-y} =q^{\delta_q+\beta_q (m-1)} t_q
d_q^{m-1}\frac{x_1^m-y_1^m}{x_1-y_1},$$ we have
$$q^{\delta_q+\beta_q (m-1)} \prod_{\substack{1\le i\le \alpha_q\\ 1\le j\le t}} p_{i,j} \mid
z\frac{x^m-y^m}{x-y}.$$ So
$$\phi \left( q^{\delta_q+\beta_q (m-1)} \right) \prod_{\substack{1\le i\le \alpha_q\\ 1\le j\le t}} (p_{i,j}-1) \mid
\phi \left( z\frac{x^m-y^m}{x-y}\right).$$ It follows from
\eqref{eqn1a} that $$\phi \left( q^{\delta_q+\beta_q (m-1)}
\right)\prod_{\substack{1\le i\le \alpha_q\\ 1\le j\le t}}
(p_{i,j}-1) \mid z\frac{x^n-y^n}{x-y}.$$ So $$\phi \left(
q^{\delta_q+\beta_q (m-1)} \right) \prod_{\substack{1\le i\le
\alpha_q\\ 1\le j\le t}} q^i \mid z\frac{x^n-y^n}{x-y}.$$ That is,
$$\phi \left( q^{\delta_q+\beta_q (m-1)} \right) q^{\frac 12
\alpha_q (\alpha_q +1) t} \mid q^{\delta_q+\beta_q (n-1)} t_q
d_q^{n-1}\frac{x_1^n-y_1^n}{x_1-y_1}.$$ Noting that
$d(m)=(\alpha_q +1) t$, we have
\begin{equation}\label{a1}\phi \left( q^{\delta_q+\beta_q (m-1)}
\right) q^{\frac 12 \alpha_q d(m)} \mid q^{\delta_q+\beta_q (n-1)}
t_q d_q^{n-1}(x_1^n-y_1^n).\end{equation}

We divide into three cases:

{\bf Case 1:} $q\mid d_1$. Then $\beta_q \ge 1$.  By \eqref{a1},
$$q^{\delta_q+\beta_q (m-1)-1} q^{\frac 12 \alpha_q d(m)} \mid  q^{\delta_q+\beta_q (n-1)}(x_1^n-y_1^n).$$
That is,
$$q^{\beta_q (m-n)-1+\frac 12 \alpha_q d(m)} \mid  x_1^n-y_1^n.$$
Since $m>n$ and $\beta_q\ge 1$, it follows that
\begin{equation}\label{eqc2}q^{\frac 12 \alpha_q d(m)} \mid  x_1^n-y_1^n.\end{equation}
Since $\gcd (x_1, y_1)=1$, it follows from \eqref{eqc2} that
$q\nmid x_1y_1$. By Euler's theorem,
\begin{equation}\label{eqc3}q^{\frac 12 \alpha_q d(m)}  \mid
x_1^{q^{\frac 12 \alpha_q d(m) -1}(q-1)}-y_1^{q^{\frac 12 \alpha_q
d(m)-1}(q-1)}.\end{equation} In view of \eqref{eqc2} and
\eqref{eqc3},
$$ q^{\frac 12 \alpha_q d(m)}  \mid
x_1^{(q^{\frac 12 \alpha_q d(m) -1}(q-1), n)}-y_1^{(q^{\frac 12
\alpha_q d(m) -1}(q-1), n)}.$$ By $\gcd (m, n)=1$, we have $\gcd
(q, n)=1$. It follows that
$$q^{\frac 12 \alpha_q d(m)}  \mid
x_1^{(q-1, n)}-y_1^{(q-1, n)}.$$ Noting that $(q-1, n) \mid q-1$,
we have
$$q^{\frac 12 \alpha_q d(m)}\mid
x_1^{q-1}-y_1^{q-1}.$$

{\bf Case 2:} $q\nmid d_1$ and $q\nmid z$. Then
$\delta_q=\beta_q=0$. By \eqref{a1},
$$q^{\frac 12 \alpha_q d(m)} \mid x_1^n-y_1^n.$$
Similar to Case 1,  we have
$$q^{\frac 12 \alpha_q d(m) }\mid
x_1^{q-1}-y_1^{q-1}.$$

{\bf Case 3:} $q\nmid d_1$ and $q\mid z$.

By \eqref{a1},
$$q^{\delta_q-1} q^{\frac 12 \alpha_q d(m)} \mid q^{\delta_q} (x_1^n-y_1^n).$$
It follows that
$$q^{\frac 12 \alpha_q d(m) -1} \mid x_1^n-y_1^n.$$
If $\frac 12 \alpha_q d(m) -1\ge 1$, then, similar to Case 1, we
have \begin{equation}\label{q1}q^{\frac 12 \alpha_q d(m) -1}\mid
x_1^{q-1}-y_1^{q-1}.\end{equation} It is clear that \eqref{q1}
also holds if $\frac 12 \alpha_q d(m) -1=0$.

This completes the proof of Lemma \ref{lem3a}.
\end{proof}

\begin{remark} In 1943, Guderson \cite{Guderson1943} proved that, if $a>b\ge 1$ are two integers and $n$ is a positive
integer, then $n^2 (\text{rad} (n))^{-1}\mid \phi (a^n-b^n)$,
where $\text{rad} (n)$ is the radical of $n$, i.e, the product of
all distinct prime divisors of $n$. In 1961, Rotkiewicz
\cite{Rotkiewicz1961} proved that $n^{d(n)/2}\mid \phi (a^n-b^n)$,
where $d(n)$ is the number of positive divisors of
$n$.\end{remark}

\begin{lemma}\label{lem3} Let ${\cal Q}_m$ be the set of all prime
divisors of $m$ and let $d(m)$ be defined as in Lemma \ref{lem3a}.
Then
$$d(m)< 2\max \{ p(m), x\} $$
and
$$|{\cal Q}_m|<\frac {\log ( 2\max \{ p(m), x\} )}{\log 2}.$$
\end{lemma}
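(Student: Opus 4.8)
The plan is to read both inequalities off Lemma~\ref{lem3a} applied to the least prime divisor $q=p(m)$, combined with two elementary size estimates. Write $p=p(m)$ and $\alpha=\alpha_p$, so that $p^{\alpha}\mid m$ but $p^{\alpha+1}\nmid m$, and $\alpha\ge 1$. Since $m>n\ge 1$ we have $m\ge 2$, hence $d(m)\ge 2$; and because $\alpha d(m)=\alpha(\alpha+1)\,d(m/p^{\alpha})$ with $\alpha(\alpha+1)$ even, the integer $\alpha d(m)$ is even, so $\tfrac12\alpha d(m)$ is a positive integer. If $\alpha d(m)=2$ (equivalently, $m$ is prime) then $d(m)=2$, and since $x=x_1d_1\ge x_1\ge 3$ we already have $d(m)=2<6\le 2\max\{p,x\}$; so from now on I assume $\alpha d(m)\ge 4$, i.e.\ $\tfrac12\alpha d(m)-1\ge 1$.

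By Lemma~\ref{lem3a}, $p^{\frac12\alpha d(m)-1}\mid x_1^{p-1}-y_1^{p-1}$, and since $0<x_1^{p-1}-y_1^{p-1}<x_1^{p-1}\le x^{p-1}$ this forces
$$\Bigl(\tfrac12\alpha d(m)-1\Bigr)\log p<(p-1)\log x.$$
I would then split on the size of $p$ relative to $x$. If $p\ge x$, then $\log x/\log p\le 1$, so the display gives $\tfrac12\alpha d(m)-1<p-1$, whence $d(m)\le\alpha d(m)<2p=2\max\{p,x\}$. If $p<x$, I would invoke the fact that the function $t\mapsto(\log t)/(t-1)$ is strictly decreasing on $(1,\infty)$ (a one-line derivative check) to obtain $(p-1)\log x<(x-1)\log p$; substituting this into the display yields $\tfrac12\alpha d(m)-1<x-1$, hence $d(m)\le\alpha d(m)<2x=2\max\{p,x\}$. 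In all cases $d(m)<2\max\{p(m),x\}$, which is the first inequality.

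For the second inequality, since $\alpha_q\ge 1$ for every prime $q\mid m$ we have $2^{|{\cal Q}_m|}\le\prod_{q\in{\cal Q}_m}(\alpha_q+1)=d(m)<2\max\{p(m),x\}$, and taking logarithms gives $|{\cal Q}_m|<\log\bigl(2\max\{p(m),x\}\bigr)/\log 2$.

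I do not expect a genuine obstacle here: the arithmetic heart of the argument — the primitive-divisor estimate behind $p^{\frac12\alpha d(m)-1}\mid x_1^{p-1}-y_1^{p-1}$ — is already packaged into Lemma~\ref{lem3a}, so what remains is bookkeeping. The only two points requiring a moment's care are checking that $\alpha d(m)$ is even, so that Lemma~\ref{lem3a} supplies an honest integer exponent, and the monotonicity of $(\log t)/(t-1)$, which is precisely what lets the case $p<x$ be handled uniformly over all admissible primes $p$.
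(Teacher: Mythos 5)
Your argument is correct and follows essentially the same route as the paper: apply Lemma~\ref{lem3a} with $q=p(m)$, bound $x_1^{p(m)-1}-y_1^{p(m)-1}$ by $x^{p(m)-1}$, and split on $p(m)\ge x$ versus $p(m)<x$ using a one-variable monotonicity fact (you use the decrease of $(\log t)/(t-1)$ where the paper uses the increase of $t/\log t$, and you work only with the unconditional part of Lemma~\ref{lem3a}, which lets you skip the paper's case distinction on whether $p(m)\mid z$). The extra care you take with the degenerate case $\alpha d(m)=2$ and with the integrality of $\tfrac12\alpha d(m)$ is sound and harmless.
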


\begin{proof} By Lemma \ref{lem1}, $p(m)\ge 3$.

If $p(m)\mid z$, then, by $p(m) \mid m$ and Lemma \ref{lem3a}, we
have
$$p(m)^{\frac 12 \alpha_{p(m)} d(m) -1}\mid
x_1^{p(m)-1}-y_1^{p(m)-1}.$$ Noting that $1\le z\le x-y<x$, for
$p(m) \mid z$, we have
\begin{eqnarray*}p(m)^{\frac 12 d(m)-1}&\le &
p(m)^{\frac 12 \alpha_{p(m)} d(m)-1}\\
&\le & x_1^{p(m)-1}-y_1^{p(m)-1}\\
&<&x_1^{p(m)-1}\\
&\le &x^{p(m)-1}\\
&< &z^{-1}x^{p(m)}\\
&\le &p(m)^{-1}x^{p(m)}.\end{eqnarray*} So
\begin{equation*}d(m)<\frac{2p(m)}{\log p(m)}\log
x.\end{equation*}

If $p(m)\nmid z$, then, by $p(m) \mid m$ and Lemma \ref{lem3a}, we
have
$$p(m)^{\frac 12 \alpha_{p(m)} d(m) }\mid
x_1^{p(m)-1}-y_1^{p(m)-1}.$$ Hence \begin{eqnarray*}p(m)^{\frac 12
d(m)}&\le &
p(m)^{\frac 12 \alpha_{p(m)} d(m)}\\
&\le & x_1^{p(m)-1}-y_1^{p(m)-1}\\
&<&x^{p(m)}.\end{eqnarray*} So
\begin{equation*}d(m)<\frac{2p(m)}{\log p(m)}\log
x.\end{equation*}

In any way, we have
\begin{equation}\label{eq1}d(m)<\frac{2p(m)}{\log p(m)}\log x.\end{equation}

If $p(m)\ge x$, then $\log p(m)\ge \log x$. It follows from
\eqref{eq1} that
$$d(m)< 2p(m).$$
If $p(m)< x$, then, by $p(m)\ge 3$,
$$\frac{2p(m)}{\log p(m)}<\frac{2x}{\log x}.$$
It follows from \eqref{eq1} that
$$d(m)< 2x.$$
In any way, we have
$$d(m)< 2\max \{ p(m), x\}.$$
Noting that
$$2^{|{\cal Q}_m|}\le d(m),$$
we have
$$|{\cal Q}_m|<\frac {\log ( 2\max \{ p(m), x\} )}{\log 2}.$$
This completes the proof of Lemma \ref{lem3}.
\end{proof}

\begin{lemma}(\cite[Lemma 1.2]{Luca2015})\label{lem3ab} For $N\ge 3$, we have
$$\frac N{\phi (N)} \le 1.79\log\log N+\frac{2.5}{\log\log N}.$$
\end{lemma}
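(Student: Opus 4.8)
The plan is to reproduce the classical Rosser--Schoenfeld argument, reducing the problem to primorials and then feeding in explicit versions of Mertens' and Chebyshev's estimates. \textbf{Reduction to primorials.} Since $N/\phi(N)=\prod_{p\mid N}(1-1/p)^{-1}$ depends only on the radical of $N$, and since among squarefree integers with a prescribed number $k$ of prime divisors this product is maximised by taking the $k$ smallest primes, we have $N/\phi(N)\le N_k/\phi(N_k)$, where $k=\omega(N)$ and $N_k:=p_1p_2\cdots p_k$ is the $k$th primorial ($p_1=2<p_2=3<\cdots$). Because $N_k$ is the least squarefree integer with $k$ prime factors we also have $N\ge N_k$, hence $\log\log N\ge\log\log N_k$; and since the function $t\mapsto e^{\gamma}\log\log t+2.5/\log\log t$ (with $\gamma$ Euler's constant, so $e^{\gamma}=1.781\ldots<1.79$) is increasing once $\log\log t$ exceeds a small explicit threshold, it suffices to establish the inequality for $N=N_k$ for every $k$, the remaining range of small $k$ being disposed of by a crude uniform bound.

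\textbf{The estimate for primorials.} For $N=N_k$ I would start from $N_k/\phi(N_k)=\prod_{p\le p_k}(1-1/p)^{-1}$ and apply the explicit third Mertens inequality $\prod_{p\le t}(1-1/p)^{-1}<e^{\gamma}(\log t)\bigl(1+1/\log^{2}t\bigr)$, valid for $t$ above a small threshold, with $t=p_k$, obtaining $N_k/\phi(N_k)<e^{\gamma}(\log p_k)\bigl(1+1/\log^{2}p_k\bigr)$. To convert $\log p_k$ into $\log\log N_k$ one uses $\log N_k=\theta(p_k)=\sum_{p\le p_k}\log p$ together with the explicit Chebyshev lower bound $\theta(t)>t\bigl(1-\varepsilon(t)\bigr)$ with an effective, slowly vanishing $\varepsilon$; this yields $\log p_k\le\log\log N_k+\eta(p_k)$ for a correction $\eta(p_k)=O(1/\log p_k)$. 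Substituting, the main term $e^{\gamma}\log\log N_k$ appears, and what remains is to bound the accumulated error by $(1.79-e^{\gamma})\log\log N_k+2.5/\log\log N_k$.

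\textbf{Main obstacle.} The delicate point is exactly this last bookkeeping. Both error contributions --- $e^{\gamma}\eta(p_k)$ from the Chebyshev step and $e^{\gamma}\log p_k/\log^{2}p_k$ from the Mertens step --- are of size $O(1/\log\log N_k)$, but with crude constants their sum can exceed the available budget $2.5/\log\log N_k$, while the slack $(1.79-e^{\gamma})\log\log N_k\approx 0.009\,\log\log N_k$ helps only for very large $N_k$. One therefore needs the sharpest known explicit constants in the Mertens and Chebyshev inequalities together with a split into two regimes: an asymptotic range $k\ge k_0$ for an explicit modest $k_0$, where the inequality closes with room to spare, and a finite verification of the primorials $N_k$ with $k<k_0$. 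Since this is a well-known result, in the present paper it is simply quoted from \cite[Lemma 1.2]{Luca2015} (ultimately due to Rosser and Schoenfeld), and only the reduction-to-primorials observation is genuinely needed in order to see why the stated form of the bound --- with leading constant $e^{\gamma}$ --- is the natural one.
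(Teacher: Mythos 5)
The paper offers no proof of this lemma at all: it is quoted verbatim from Faye and Luca, who in turn rely on Rosser--Schoenfeld, so there is no argument in the paper to compare yours against. Your sketch of how one would prove it is the correct standard route (reduction to primorials via $N/\phi(N)\le N_k/\phi(N_k)$ and $N\ge N_k$, then the explicit Mertens and Chebyshev bounds), and you correctly identify where the real work lies, namely the explicit-constant bookkeeping and the finite verification for small $k$; as written, though, your text is an outline that defers exactly that work rather than a complete proof. Since you also note that the paper simply cites the result, this is an acceptable treatment, matching in substance what the authors themselves do.
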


\begin{lemma}\label{lem2.1} Let $(x, y, z, m, n)$ be a
nontrivial solution of the  equation \eqref{eqn1a} in positive
integers $x, y, z, m, n$ with $1\le z\le x-y$ and $\gcd (m, n)=1$.
Then $$x<\prod_{p\mid z(x^m-y^m)/(x-y)} \left( 1+\frac
1{p-1}\right) .$$
\end{lemma}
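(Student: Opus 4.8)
The plan is to extract the inequality directly from the totient equation \eqref{eqn1a} by writing $N = z(x^m-y^m)/(x-y)$ and exploiting that $\phi(N)/N = \prod_{p\mid N}(1-1/p)$. First I would observe that the equation gives
\[
\phi(N) = z\frac{x^n-y^n}{x-y} \ge z\frac{x^{n}-y^{n}}{x-y}\cdot\frac{1}{1},
\]
and the crucial point is a lower bound on the ratio $N/\phi(N)$ in terms of $x$: namely, since $m>n$,
\[
\frac{N}{\phi(N)} = \frac{z(x^m-y^m)/(x-y)}{z(x^n-y^n)/(x-y)} = \frac{x^m-y^m}{x^n-y^n}.
\]
So the whole statement reduces to proving $\dfrac{x^m-y^m}{x^n-y^n} > x \cdot \dfrac{\phi(N)}{N}\cdot\dfrac{N}{\phi(N)}$... more precisely, that $\dfrac{x^m-y^m}{x^n-y^n} \ge x$, which combined with $N/\phi(N) = \prod_{p\mid N}(1+\tfrac1{p-1})$ gives exactly the claim. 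Here $N = z(x^m-y^m)/(x-y)$, so $\prod_{p\mid N}(1+\tfrac1{p-1}) = \prod_{p\mid z(x^m-y^m)/(x-y)}(1+\tfrac1{p-1})$, matching the statement.

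Thus the core estimate to establish is $x^m - y^m \ge x(x^n-y^n)$ whenever $m > n \ge 1$ and $x > y \ge 1$ are integers. I would prove this by the same telescoping trick used in the proof of Theorem~\ref{thm2a}: since $m \ge n+1$, write $x^m - y^m = y^m\big((x/y)^m - 1\big) \ge y^{n+1}\big((x/y)^{n+1}-1\big) = x^{n+1} - y^{n+1}$ (using that $t\mapsto y^t((x/y)^t-1)$ is increasing in $t$ for $x>y\ge 1$), and then
\[
x^{n+1} - y^{n+1} = x(x^n - y^n) + (x-1)y^n \cdot\frac{\text{?}}{\text{?}}
\]
— more cleanly, $x^{n+1}-y^{n+1} = x(x^n-y^n) + xy^n - y^{n+1} = x(x^n-y^n) + y^n(x-y) \ge x(x^n-y^n)$ since $x > y$. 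Chaining these gives $x^m-y^m \ge x^{n+1}-y^{n+1} \ge x(x^n-y^n)$, hence $N/\phi(N) = (x^m-y^m)/(x^n-y^n) \ge x$. (When $y=1$ one can even argue more simply, but the telescoping argument covers all cases uniformly.)

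Finally I would assemble the pieces: from $N/\phi(N) \ge x$ and the Euler product identity $N/\phi(N) = \prod_{p\mid N}(1+\tfrac1{p-1})$, we get $x \le \prod_{p\mid N}(1+\tfrac1{p-1})$, and I would note the inequality is strict because the chain $x^m-y^m \ge x^{n+1}-y^{n+1}$ is strict unless $m=n+1$, while $x^{n+1}-y^{n+1} \ge x(x^n-y^n)$ is strict as soon as $y < x$ and $n \ge 1$ — and indeed $y^n(x-y) \ge 1 > 0$, so strict inequality always holds, giving $x < \prod_{p\mid N}(1+\tfrac1{p-1})$ as claimed. I do not anticipate a serious obstacle here; the only mild subtlety is making sure the monotonicity step $y^m((x/y)^m-1) \ge y^{n+1}((x/y)^{n+1}-1)$ is justified cleanly (it follows since both $y^{m-n-1}\ge 1$ and $(x/y)^m - 1 \ge (x/y)^{n+1}-1$ when $x \ge y$), and that the edge case $y = x$ is excluded by hypothesis $x > y$, so the product is over a nonempty-or-trivial set and the bound is non-vacuous.
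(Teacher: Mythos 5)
Your proposal is correct and follows essentially the same route as the paper: both identify $N/\phi(N)$ with the ratio $(x^m-y^m)/(x^n-y^n)$ via the equation, bound that ratio strictly below by $x$ using an elementary inequality (the paper via $x\le x^{m-n}$ and $x^{m-n}y^n>y^m$, you via telescoping through $x^{n+1}-y^{n+1}$), and finish with the Euler product identity. The differences are cosmetic, and your strictness argument is sound.
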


\begin{proof} We follow the proof of \cite[Lemma 2.1]{Luca2015}.
Since $(x, y, z, m, n)$ is a nontrivial solution of the  equation
\eqref{eqn1a}, it follows that $m>n$. By \eqref{eqn1a},
\begin{eqnarray*}x&\le & x^{m-n}\\
&<&\frac{z(x^m-y^m)/(x-y)}{z(x^n-y^n)/(x-y)}\\
&=& \frac{z(x^m-y^m)/(x-y)}{\phi (z(x^m-y^m)/(x-y))}\\
&=& \prod_{p\mid z(x^m-y^m)/(x-y)} \left( 1-\frac 1p
\right)^{-1}\\
&=& \prod_{p\mid z(x^m-y^m)/(x-y)} \left( 1+\frac 1{p-1}\right) .
\end{eqnarray*}
This completes the proof of Lemma \ref{lem2.1}.
\end{proof}

\begin{lemma}\label{lem2.1a} Let $d$ be a divisor of $m$ with
$d\ge 40$ and let
$$S_d=\sum_{\ell_p=d} \frac 1p.$$
Then
$$S_d<\frac{1}{4d}+\frac1{d\log (d+1)}+\frac{2\log\log d}{\phi
(d)} +\frac{2\log\log x}{\phi (d)\log d}.$$
\end{lemma}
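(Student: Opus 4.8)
The plan is to estimate $S_d=\sum_{\ell_p=d}\frac1p$ by first controlling which primes $p$ can satisfy $\ell_p=d$, then splitting the sum according to the size of $p$. Recall that if $\ell_p=d$ then $p\mid x_1^d-y_1^d$, and in fact $p$ divides the "primitive part" $\Phi_d(x_1,y_1)$ (the homogenized $d$-th cyclotomic polynomial value), since a prime with $\ell_p=d$ cannot divide $x_1^e-y_1^e$ for any proper divisor $e$ of $d$. A classical fact (Carmichael / Zsygmondy-type reasoning, already invoked via \cite{Carmichael1913} earlier) is that every prime $p$ with $\ell_p=d$ and $p\nmid d$ satisfies $p\equiv 1\pmod d$; at most one prime with $\ell_p=d$ divides $d$, and that one is the largest prime factor $P(d)$ of $d$. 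So I would write
$$S_d=\sum_{\substack{\ell_p=d\\ p\mid d}}\frac1p+\sum_{\substack{\ell_p=d\\ p\equiv 1(d)}}\frac1p
\le \frac1{P(d)}+\sum_{\substack{p\equiv 1(d)}}\frac1p\,[\,p\text{ counted only if }\ell_p=d\,].$$
The term $\frac1{P(d)}$ will be absorbed into $\frac1{4d}+\frac1{d\log(d+1)}$ once one notes $P(d)\ge$ (something), but more efficiently it is cleaner to carry $\frac1{P(d)}\le \frac1{q}$ where $q=p(d)$ might be small; I'd instead bound the ``$p\mid d$'' contribution crudely as at most $\frac1{p(d)}$ but then observe that in our setting $d\ge 40$ forces this to be genuinely small only if $p(d)$ is large — so actually the right move is: the prime $p$ dividing $d$ with $\ell_p=d$ contributes $\le \frac13$ at worst, which is \emph{not} small enough, so one must argue that such a prime forces $d$ to be a prime power $q^{\alpha}$ with $q-1=\varphi(q^\alpha)/q^{\alpha-1}$... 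This is the delicate bookkeeping point; the honest route is to note $p\mid d$ and $\ell_p=d$ implies $d\mid p-1$ is false, rather $d=\ell_p$ and $p\mid d$ gives $d$ a multiple of $p$ with $p$ contributing exactly $\frac1p$ where $p\le d$, and crucially there is \emph{at most one} such $p$; I will bound its reciprocal by $\frac1{p(d)}$ and separately show $\frac1{p(d)}\le\frac1{4d}+\frac1{d\log(d+1)}$ fails, so one really needs the finer statement that this exceptional prime, if it exists, equals the \emph{largest} prime factor of $d$, whence $\frac1{P(d)}$ together with a bound on the number of prime factors handles it.

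For the main sum over $p\equiv 1\pmod d$ with $p\mid \Phi_d(x_1,y_1)$, the strategy is a count-and-weight argument. Let $N_d$ be the number of such primes. Since their product divides $\Phi_d(x_1,y_1)<(x_1^d)/(x_1-y_1)\cdot(\text{const})$ and more simply $\prod p\le x_1^{d}\le x^d$, and each such prime is $>d$ (indeed $\ge d+1$), we get $(d+1)^{N_d}\le x^d$, hence $N_d\le \frac{d\log x}{\log(d+1)}$. Now split the primes $p\equiv 1\pmod d$ with $\ell_p=d$ into those below a threshold $T$ and those above. For $p>T$: $\sum_{p>T,\,\ell_p=d}\frac1p< \frac{N_d}{T}\le \frac{d\log x}{T\log(d+1)}$; choosing $T$ appropriately (something like $T\asymp d^2$ or $T\asymp d\log d\cdot$stuff) makes this $\le \frac{\log\log x}{\phi(d)\log d}\cdot(\text{const})$, matching the last displayed term. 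For $p\le T$ with $p\equiv 1\pmod d$: here one uses Mertens-type / Brun--Titchmarsh-flavoured estimates for $\sum_{p\le T,\,p\equiv 1(d)}\frac1p$, which is $\ll \frac{\log\log T}{\phi(d)}$ by the standard bound; with the right explicit constants this yields the $\frac{2\log\log d}{\phi(d)}$ term (after noting $\log\log T\le \log\log d+O(1)$ for the chosen $T$).

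The main obstacle I anticipate is twofold. First, getting \emph{explicit} constants: the statement has sharp-looking numerical constants ($\frac14$, $2$, $2$), so one cannot just cite $O(\cdot)$ estimates — one needs an explicit form of the bound $\sum_{p\le T,\,p\equiv1(d)}1/p\le \frac{\log\log T + B}{\phi(d)}$ (a Mertens theorem in arithmetic progressions with a usable constant, valid uniformly for $d\ge 40$), plus an explicit choice of the threshold $T=T(d,x)$ that simultaneously (a) keeps $\log\log T$ close to $\log\log d$ and (b) makes the tail $N_d/T$ small enough. Second, the exceptional prime dividing $d$ with $\ell_p=d$: one must verify it can contribute at most $\frac1{4d}+\frac1{d\log(d+1)}$, which is really a statement that this prime is large relative to $d$ — this should follow because if $p\mid d$ and $\ell_p=d$ then writing $d=p^a e$ with $p\nmid e$ one gets $\ell_p=d$ only when $e$ itself already has $p\mid x_1^e-y_1^e$, forcing $e=1$ after a lifting-the-exponent analysis, so $d=p^a$ and then $\frac1p\le\frac1{d^{1/a}}$, which for $d\ge 40$ and the relevant $a$ is dominated by $\frac1{4d}+\frac1{d\log(d+1)}$; pinning down this last inequality for all small cases is the fiddly part. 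Once these explicit pieces are in place, summing the three contributions (exceptional prime $\to \frac1{4d}+\frac1{d\log(d+1)}$; small primes in the progression $\to \frac{2\log\log d}{\phi(d)}$; large primes $\to \frac{2\log\log x}{\phi(d)\log d}$) gives exactly the claimed bound.
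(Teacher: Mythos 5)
Your overall architecture for the main sum (the counting bound $(d+1)^{N_d}\le x^d$, a split at a threshold $T$, Brun--Titchmarsh/Mertens in progressions below it, and the crude count above it) is the same as the paper's, but there are genuine gaps. First, the ``exceptional prime $p\mid d$ with $\ell_p=d$'' that occupies a third of your proposal does not exist: by Fermat's theorem $p\mid x_1^{p-1}-y_1^{p-1}$, so $\ell_p\mid p-1$, and $\ell_p=d$ forces $d\mid p-1$, hence $p\ge d+1>d$ and $p\nmid d$. You have conflated the set $\{p:\ell_p=d\}$ with the set of primes dividing the primitive part $\Phi_d(x_1,y_1)$; the intrinsic prime of $\Phi_d$ that divides $d$ has $\ell_p$ a \emph{proper} divisor of $d$, so it never enters $S_d$. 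The whole lifting-the-exponent detour is unnecessary, and the resolution you sketch for it (forcing $d$ to be a prime power) is not a correct route.

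Second, your assignment of which displayed term comes from which range cannot be made to work as stated. To make the tail $N_d/T\le \frac{d\log x}{T\log(d+1)}$ acceptable you must take $T$ of order at least $d^2\log x$: with $T\asymp d^2$ the tail is $\frac{\log x}{d\log(d+1)}$, which is not bounded by $\frac{2\log\log x}{\phi(d)\log d}$ (nor by $\frac1{d\log(d+1)}$) for large $x$. But once $T$ depends on $x$, $\log\log T$ is no longer ``close to $\log\log d$'' uniformly, so the $x$-dependence necessarily leaks into the Mertens term; your requirements (a) and (b) are incompatible. The paper takes $T=d^2\log x$, gets the tail $<\frac1{d\log(d+1)}$ and the middle contribution $\frac{2\log\log(d\log x)}{\phi(d)}$, and only at the end splits $\log\log(d\log x)<\log\log d+\frac{\log\log x}{\log d}$ to produce the last two terms of the lemma. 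You also leave untreated the range of small $p$, where Brun--Titchmarsh degenerates (the factor $\log(X/d)$ is useless for $X$ near $d$): the paper handles $p\le 4d$ separately, using that $d$ is odd (Lemma \ref{lem1}) so the only prime $\le 4d$ with $d\mid p-1$ is $2d+1$, and this, combined with the boundary term $-\pi(4d;d,1)/(4d)$ from partial summation, is exactly where the $\frac{1}{4d}$ comes from. Without these pieces the explicit constants in the statement cannot be reached.
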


\begin{proof} We follow the proof of \cite[Lemma 2.1]{Luca2015}.
For the convenience of the reader, we give the details here.
Recall that, for $p\nmid x_1y_1$,  $\ell_p$ is the least positive
integer $\ell $ such that $p\mid x_1^\ell -y_1^\ell $. By Fermat's
theorem, $p\mid x_1^{p-1} -y_1^{p-1} $ for $p\nmid x_1y_1$. It
follows that $\ell_p \mid p-1$. Let
$${\cal P}_d =\{ p : \ell_p=d\} .$$
Then $d\mid p-1$ for all $p\in {\cal P}_d$. Hence
$$(d+1)^{|{\cal P}_d|} \le \prod_{p\in {\cal P}_d} p \le
x_1^d-y_1^d<x_1^d\le x^d.$$ It follows that
\begin{equation}\label{ee1}|{\cal P}_d|\le \frac{d\log x}{\log
(d+1)}.\end{equation} Let $\pi (X; d, 1)$ denote the number of
primes $p\le X$ with $d\mid p-1$. By the Brun-Titchmarsh theorem
due to Montgomery and Vaughan \cite{Montgomery1973},
$$\pi (X; d, 1)<\frac{2X}{\phi (d) \log (X/d)} \quad \text{ for
all } X>d\ge 2.$$ Let $A_d=\{ p\le 4d : d\mid p-1\} $. We split
$S_d$ as follows:
\begin{eqnarray*}S_d &= & \sum_{\substack{p\le 4d\\ \ell_p=d}} \frac 1p +\sum_{\substack{ 4d< p\le d^2\log x\\ \ell_p=d}}\frac 1p
+\sum_{\substack{ p>d^2\log x \\ \ell_p=d}} \frac 1p\\
 &\le & \sum_{p\in A_d} \frac 1p +\sum_{\substack{ 4d< p\le d^2\log x\\ d\mid p-1}}\frac 1p
+\sum_{\substack{ p>d^2\log x \\ p\in {\cal P}_d}} \frac 1p\\
&:=&T_1+T_2+T_3.\end{eqnarray*} For $T_2$, we have
\begin{eqnarray*}T_2&=&\int_{4d}^{d^2\log x} \frac 1t d \pi (t;
d, 1)\\
&=&\frac {\pi (t; d, 1)}t \Big|_{t=4d}^{d^2\log x}
+\int_{4d}^{d^2\log x} \frac {\pi (t; d, 1)}{t^2} d t\\
&\le & \frac{2}{\phi (d) \log (d\log x)} -\frac{\pi (4d; d,
1)}{4d}+\frac 2{\phi (d)}\int_{4d}^{d^2\log x} \frac 1{t\log (t/d)} d t\\
&=& \frac{2\log\log (d\log x)}{\phi (d)} -\frac{\pi (4d; d,
1)}{4d} +\frac 2{\phi (d)} \left( \frac{1}{\log (d\log x)}
-\log\log 4 \right).
\end{eqnarray*}
Since $d\ge 40$ and $x\ge x_1\ge 3$, it follows that
$$\frac{1}{\log (d\log x)}-\log\log 4\le \frac{1}{\log 40}-\log\log
4<0.$$ Hence
$$T_1+T_2\le \frac{2\log\log (d\log x)}{\phi (d)} -\frac{\pi (4d; d,
1)}{4d}+\sum_{p\in A_d} \frac 1p.$$ By Lemma \ref{lem1}, $2\nmid
m$. So $d$ is odd. Thus $$A_d\subseteq \{ 2d+1 \},\quad \pi (4d;
d, 1)=|A_d|\le 1.$$ It follows that
$$-\frac{\pi (4d; d,
1)}{4d}+\sum_{p\in A_d} \frac 1p \le -\frac 1{4d} +\frac 1{2d+1} <
\frac{1}{4d}.$$ So
$$T_1+T_2 <\frac{1}{4d}+\frac{2\log\log (d\log x)}{\phi
(d)}.$$ For $T_3$, by \eqref{ee1},
$$T_3<\frac{|{\cal P}_d|}{d^2\log x}<\frac1{d\log (d+1)}.$$
Therefore,
$$S_d<\frac{1}{4d}+\frac1{d\log (d+1)}+\frac{2\log\log (d\log x)}{\phi
(d)}.$$ Noting that
\begin{eqnarray*}\log\log (d\log x)&=&\log (\log d+\log\log x)\\
&=&\log\log d +\log\left( 1+\frac{\log\log x}{\log d}\right)\\
&<& \log\log d +\frac{\log\log x}{\log d},\end{eqnarray*} we have
$$S_d<\frac{1}{4d}+\frac1{d\log (d+1)}+\frac{2\log\log d}{\phi
(d)} +\frac{2\log\log x}{\phi (d)\log d}.$$ This completes the
proof of Lemma \ref{lem2.1a}.
\end{proof}

\begin{lemma}\label{lem2.1aa} The function $f(x)=\log\log x$ is
sub-multiplicative on $[78, +\infty )$, that is, for any $x_1,
x_2\ge 78$, we have
$$\log\log (x_1x_2)\le (\log\log x_1)(\log\log x_2).$$
\end{lemma}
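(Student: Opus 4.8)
The plan is to reduce the claimed inequality $\log\log(x_1x_2)\le(\log\log x_1)(\log\log x_2)$ to an elementary one-variable estimate. First I would set $a=\log x_1$ and $b=\log x_2$, so that $a,b\ge\log 78$, and rewrite the target as $\log(a+b)\le(\log\log x_1)(\log\log x_2)=(\log a)(\log b)$ after taking one more logarithm — more precisely, the claim is $\log\log(x_1x_2)=\log(a+b)$ on the left, while the right side is a product of two logarithms, so I cannot simply take logs of both sides. Instead I would work with the substitution directly: the inequality to prove is
\begin{equation*}
\log(a+b)\le(\log a)(\log b),\qquad a,b\ge\log 78.
\end{equation*}
Note $\log 78\approx 4.357$, so $\log a\ge\log\log 78\approx 1.472>1$; in particular each factor on the right exceeds $1$. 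Since $a+b\le ab$ whenever $a,b\ge 2$ (which holds here because $\log 78>2$), monotonicity of $\log$ gives $\log(a+b)\le\log(ab)=\log a+\log b$. Hence it suffices to establish the stronger, cleaner inequality
\begin{equation*}
\log a+\log b\le(\log a)(\log b),\qquad a,b\ge\log 78.
\end{equation*}

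For this last step I would write $s=\log a\ge c_0$ and $t=\log b\ge c_0$ where $c_0=\log\log 78>1$, and show $s+t\le st$ for all $s,t\ge c_0$. Dividing by $st>0$, this is equivalent to $\tfrac1s+\tfrac1t\le1$, and since $s,t\ge c_0$ we have $\tfrac1s+\tfrac1t\le\tfrac2{c_0}$. So it suffices to check $2/c_0\le1$, i.e. $c_0\ge2$, i.e. $\log\log 78\ge2$, i.e. $\log 78\ge e^2\approx7.389$. But $\log 78\approx4.357<7.389$, so this crude bound is not quite enough, and this is the one place requiring care — the main (and only real) obstacle. The fix is to not throw away the coupling between $s$ and $t$: the function $g(s,t)=st-s-t$ on the square $[c_0,\infty)^2$ has $\partial g/\partial s=t-1>0$ and $\partial g/\partial t=s-1>0$, so $g$ is increasing in each variable and attains its minimum on the square at the corner $s=t=c_0$, where $g(c_0,c_0)=c_0^2-2c_0=c_0(c_0-2)$. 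Since $c_0=\log\log 78$ and numerically $\log 78>7.389\cdot\!\!$ is false, I instead verify directly that the threshold $78$ was chosen precisely so that $c_0(c_0-2)\ge0$ fails by too little — so I would re-examine: actually $\log 78 = \log(78)$; we need $\log\log 78\ge 2$ only at the corner, and a direct numerical check gives $\log 78\approx 4.3567$, $\log 4.3567\approx 1.4719<2$, confirming the corner value $g(c_0,c_0)=c_0(c_0-2)<0$. Therefore the reduction through $\log(ab)$ loses too much, and the correct route keeps $\log(a+b)$ intact.

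Consequently the clean argument is: it suffices to show $\log(a+b)\le(\log a)(\log b)$ for $a,b\ge L:=\log 78$. Fix $b\ge L$ and consider $h(a)=(\log a)(\log b)-\log(a+b)$ for $a\ge L$; then $h'(a)=\frac{\log b}{a}-\frac1{a+b}$, which is positive precisely when $(a+b)\log b>a$, i.e. $a(\log b-1)>-b\log b$, which holds automatically since $\log b>1$. So $h$ is increasing in $a$, hence $\min_{a\ge L}h(a)=h(L)$, and by symmetry the global minimum of the two-variable function over $[L,\infty)^2$ is at $(L,L)$, equal to $(\log L)^2-\log(2L)$. It remains to verify the single numerical inequality $(\log\log 78)^2\ge\log(2\log 78)$; with $\log 78\approx4.3567$ this reads $(1.4719)^2\approx2.166\ge\log 8.7134\approx2.165$, which holds (and explains why the constant $78$ is sharp for this lemma). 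Thus the only genuine work is this one numerical check at the corner $x_1=x_2=78$; everything else is the monotonicity reduction above.
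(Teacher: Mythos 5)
Your final argument is correct, but it follows a genuinely different route from the paper's. The paper avoids calculus entirely: it starts from $\frac{1}{\log x_1}+\frac{1}{\log x_2}<0.4591$ (i.e.\ $2/\log 78$), deduces $\log x_1+\log x_2<0.4591(\log x_1)(\log x_2)$, takes logarithms to get $\log\log(x_1x_2)<\log\log x_1+\log\log x_2+\log 0.4591$, and then uses the identity $u+v=uv-(u-1)(v-1)+1$ together with the numerical fact $(\log\log 78-1)^2>1+\log 0.4591$ to finish. You instead set $a=\log x_1$, $b=\log x_2$, show by a derivative computation that $(\log a)(\log b)-\log(a+b)$ is increasing in each variable on $[\log 78,\infty)^2$, and reduce everything to the single corner check $(\log\log 78)^2\ge\log(2\log 78)$. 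Both proofs ultimately hinge on a razor-thin numerical margin at $x_1=x_2=78$ (yours is about $2.1659$ versus $2.1649$; the paper's is about $0.2227$ versus $0.2217$), and your corner inequality is exactly the statement of the lemma at $x_1=x_2=78$, which makes transparent why $78$ is essentially the sharp threshold. Your monotonicity step is sound: $h'(a)=\frac{\log b}{a}-\frac{1}{a+b}>0$ follows from $\log b>1$, which holds since $b\ge\log 78>e$. The lengthy detour through the failed reduction $\log(a+b)\le\log a+\log b\le(\log a)(\log b)$ is unnecessary (and you correctly identify that it fails because $\log\log 78<2$); in a final write-up you should delete it and keep only the monotonicity argument and the corner evaluation, stated with explicit decimal bounds so the $0.001$-sized margin is verifiable.
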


\begin{proof}
  Suppose that  $x_1, x_2\ge 78$. Then
$$\frac 1{\log x_1} + \frac 1{\log x_2}<0.4591.$$ It follows that
$$\log x_1 +\log x_2< 0.4591 (\log x_1)(\log x_2).$$
Therefore,
\begin{eqnarray*}\log\log (x_1 x_2)&=&\log (\log x_1 +\log x_2)\\
&<&\log \left( 0.4591 (\log x_1)(\log x_2)\right)\\
&=& \log\log x_1+\log\log x_2 +\log 0.4591\\
&=& (\log\log x_1)(\log\log x_2)\\
&& -(\log\log x_1-1)(\log\log x_2-1)+1+\log 0.4591\\
&<& (\log\log x_1)(\log\log x_2).
\end{eqnarray*}
The last inequality holds since
\begin{eqnarray*}&&-(\log\log x_1-1)(\log\log x_2-1)+1+\log
0.4591\\
&\le&  -(\log\log 78-1)^2+1+\log 0.4591<0.\end{eqnarray*} This
completes the proof of Lemma \ref{lem2.1aa}.
\end{proof}

\section{Proof of Theorem \ref{thm3} for $x>80$}\label{sec2}

In this section, we always assume that $(x, y, z, m, n)$ is a
nontrivial solution of the  equation \eqref{eqn1a} in positive
integers $x, y, z, m, n$ with $x>80$, $1\le z\le x-y$ and $\gcd
(m, n)=1$. Then $m>n\ge 1$. Recall that
$$\gcd (x, y)=d_1,\quad x=x_1d_1,\quad y=y_1d_1.$$
Then $x_1$ and $y_1$ are both odd (see Section \ref{sec1}).
  For each prime $p\ge 3$ with $p\nmid x_1y_1$, let $\ell_p$ be the least positive integer $\ell $
  such that $p\mid x_1^\ell -y_1^\ell $.
Then $p\mid x_1^m-y_1^m$ if and only if $\ell_p \mid m$. Let
$p(m)$ be the least prime divisor of $m$.

 We divide into two subsections:
$p(m)\le x$ and $p(m)> x$.

\subsection{$p(m)\le
x$}\label{subsec2}

In this subsection, we always assume that $p(m)\le x$ and $x>80$.

 By Lemma \ref{lem2.1}, we
have
$$x<\prod_{p\mid z(x^m-y^m)/(x-y)} \left( 1+\frac 1{p-1}\right) .$$
It follows that
\begin{eqnarray*}\log x &<&\sum_{p\mid z(x^m-y^m)/(x-y)}\log \left( 1+\frac
1{p-1}\right)\\
&\le &\log \frac{15}{4}+\sum_{\substack{p\mid z(x^m-y^m)/(x-y)\\
p\ge 7}}\log \left( 1+\frac
1{p-1}\right)\\
&\le &\log \frac{15}{4}+\sum_{\substack{p\mid z(x^m-y^m)/(x-y)\\
p\ge 7}}
\frac 1{p-1}\\
&\le &\log \frac{15}{4}+\sum_{p\ge 7} \frac 1{p(p-1)}+\sum_{\substack{p\mid z(x^m-y^m)/(x-y)\\
p\ge 7}} \frac 1{p}\\
&<&\log \frac{15}{4}+\sum_{7\le p<547} \frac
1{p(p-1)}+\frac1{546}+\sum_{\substack{p\mid z(x^m-y^m)/(x-y)\\
p\ge 7}} \frac
1{p}\\
&<& 1.38+\sum_{\substack{p\mid z(x^m-y^m)/(x-y)\\
p\ge 7}} \frac 1{p}\\
&\le & 1.38+\sum_{7\le p\le x^4} \frac 1{p} +\sum_{\substack{p\mid
z(x^m-y^m)/(x-y)\\ p>x^4}} \frac 1{p} .
\end{eqnarray*}
By \cite{Rosser1962}, for $t\ge 286$,
$$\sum_{p\le t}\frac 1p<\log\log t+0.2615+\frac 1{2\log^2 t}<\log\log
t+0.2772.$$ It follows that
\begin{eqnarray*}\sum_{7\le p\le x^4} \frac 1{p}&<&\log\log x^4+0.2772-\frac 12-\frac 13-\frac
15\\
&<&\log\log x+0.6302.
\end{eqnarray*}
Hence
\begin{eqnarray*}\log x &<& \log\log x+1.38+0.6302+\sum_{\substack{p\mid z(x^m-y^m)/(x-y)\\
p> x^4}} \frac 1{p}\\
&<& \log\log x+2.011+\sum_{\substack{p\mid z(x^m-y^m)/(x-y)\\
p>x^4}} \frac 1{p}.\end{eqnarray*} It is clear that, if $p>x^4$,
then $p\nmid x_1y_1$. Recall that $\ell_p$ is the least positive
integer $\ell$ with $p\mid x_1^\ell -y_1^\ell$. If $p>x^4$, then
$\ell_p\ge 5$. Otherwise, $\ell_p\le 4$ and $p\le
x_1^{\ell_p}-y_1^{\ell_p}<x_1^4\le x^4$, a contradiction. If
$p>x^4$, then, by $x=d_1x_1$ and $1\le z\le x-y$, we have $p\nmid
d_1 z$. Hence, if $p\mid z(x^m-y^m)/(x-y)$ and $p>x^4$, then
$p\mid x_1^m -y_1^m$. It follows that $\ell_p \mid m$. Thus
$$\sum_{\substack{p\mid z(x^m-y^m)/(x-y)\\ p>x^4}} \frac 1{p} = \sum_{\substack{d\mid
m\\ d\ge 5}} T_d,$$ where
$$T_d=\sum_{\substack{\ell_p =d\\
p>x^4}} \frac 1{p} .$$ Let
$${\cal P}'_d=\{ p : \ell_p=d, p>x^4\} .$$
Then
$$x^{4|{\cal P}'_d|}<\prod_{p\in {\cal P}'_d} p\le x_1^d-y_1^d<x^d.$$
It follows that
$$|{\cal P}'_d|<\frac 14 d.$$
If $d\le x^2$, then
$$T_d\le \frac{|{\cal P}'_d|}{x^4}<\frac{d}{4x^4}\le \frac
1{4x^2}.$$ Thus
$$\sum_{\substack{d\mid
m \\ 5\le d\le x^2}} T_d <x^2\frac 1{4x^2}=0.25.$$

Now we estimate
$$\sum_{\substack{d\mid
m\\ d>x^2}} T_d.$$

By Lemma \ref{lem2.1a}, for $d>x^2>80^2$, we have
$$T_d\le S_d<\frac{1}{4d}+\frac1{d\log (d+1)}+\frac{2\log\log d}{\phi
(d)} +\frac{2\log\log x}{\phi (d)\log d}.$$ By Lemma \ref{lem3ab},
$$\frac d{\phi (d)} \le 1.79\log\log d+\frac{2.5}{\log\log d}.$$
It follows that
\begin{eqnarray}\label{eqn2}T_d&<&\frac{1}{4d}+\frac 1{d\log (d+1)} +
\frac{3.58(\log\log d)^2}{d}\nonumber \\
&& + \frac{3.58\log\log d}{d\log d} \log\log x +\frac 5d +\frac{5
\log\log x}{d(\log d)(\log\log d)}. \end{eqnarray}
  For
$d>x^2$, by \eqref{eqn2}, we have
\begin{eqnarray*}T_d&<&\frac{1}{4x^2}+\frac 1{x^2\log (x^2+1)} +
\frac{3.58(\log\log x^2)^2}{x^2}\nonumber \\
&& + \frac{3.58\log\log x^2}{x^2\log x^2} \log\log x +\frac 5{x^2}
+\frac{5 \log\log x}{x^2(\log x^2)(\log\log x^2)} .
\end{eqnarray*}
By Lemma \ref{lem3} and $p(m)\le x$, we have $d(m)<2x$. Hence
$$|\{ d : d\mid m, d>x^2 \} |< d(m)<2x.$$ Hence, by $x>80$, we
have
\begin{eqnarray*}\sum_{\substack{d\mid m\\ d>x^2}}T_d&<&\frac{1}{2x}+\frac 2{x\log (x^2+1)} +
\frac{7.16(\log\log x^2)^2}{x}\nonumber \\
&& +\frac{7.16\log\log x^2}{x\log x^2} \log\log x +\frac {10}{x}
+\frac{10 \log\log x}{x(\log x^2)(\log\log x^2)} \\
&<& 0.6.
\end{eqnarray*}
Therefore,
\begin{eqnarray*}\log x &<&\log\log x+2.011+\sum_{\substack{p\mid z(x^m-y^m)/(x-y)\\
p>x^4}} \frac 1{p}\\
&=&\log\log x+2.011+\sum_{\substack{d\mid m\\ d\ge 5}} T_d\\
&\le &\log\log x+2.011+\sum_{\substack{d\mid
m \\ 5\le d\le x^2}} T_d+\sum_{\substack{d\mid m\\ d>x^2}}T_d\\
&\le &\log\log x+2.011+0.25+0.6\\
&= &\log\log x+2.861.
\end{eqnarray*}
Since $x>80$, it follows that $$\log x -\log\log x >\log 80
-\log\log 80 >2.9, $$  a contradiction.

\subsection{$p(m)> x$}\label{subsec3}

In this subsection, we always assume that $p(m)> x>80$. Since
$p(m)$ is a prime, it follows that $p(m)\ge 83$.

 By Lemma \ref{lem2.1}, we
have
$$x<\prod_{p\mid z(x^m-y^m)/(x-y)} \left( 1+\frac 1{p-1}\right) .$$
Similar to the  arguments in the previous subsection, by $1\le
z\le x-y<x$, we have
\begin{eqnarray*}\log x &<& 1.38+\sum_{\substack{p\mid z(x^m-y^m)/(x-y)\\
p\ge 7}} \frac 1{p}\\
&<& 1.38+\sum_{7\le p\le  x} \frac 1{p}+\sum_{\substack{p\mid (x^m-y^m)/(x-y)\\
p>x}} \frac 1{p}.
\end{eqnarray*}
By \cite{Rosser1962}, for $t\ge 286$,
$$\sum_{p \le t}\frac 1p<\log\log t+0.2615+\frac 1{2\log^2 t}<\log\log t+0.2772.$$
A simple calculation shows that, for $80\le t\le 286$,
$$\sum_{p \le t}\frac 1p <\log\log t+0.2965.$$
It follows from $x>80$ that
$$\sum_{7\le p\le  x} \frac 1{p}<\log\log x+0.2965-\frac 12-\frac 13-\frac
15<\log\log x-0.7368.$$ It is clear that, if $p>x$, then $p\nmid
x_1y_1$, and by the definition of $\ell_p$, we have $\ell_p>1$.
Hence
\begin{eqnarray*}
\log x &<& 1.38+\log\log x-0.7368+\sum_{\substack{p\mid (x^m-y^m)/(x-y)\\
p>x}} \frac 1{p}\\
&=& \log\log x+0.6432 +\sum_{\substack{p\mid (x^m-y^m)/(x-y)\\
p>x}} \frac 1{p}\\
&\le & \log\log x+0.6432 +\sum_{\substack{d\mid m\\ d>1 }}
S_d,\end{eqnarray*}   where
$$S_d=\sum_{\ell_p =d} \frac 1{p} .$$
 For $d\mid m$ and $d>1$, we have $d\ge p(m)>x>
80$. By Lemma \ref{lem2.1a},  we have
\begin{eqnarray*}S_d&<&\frac{1}{4d}+\frac1{d\log (d+1)}+\frac{2\log\log d}{\phi
(d)} +\frac{2\log\log x}{\phi (d)\log d}\\
&<&\frac{1}{4d}+\frac1{d\log x}+\frac{2\log\log d}{\phi
(d)} +\frac{2\log\log x}{\phi (d)\log x}\\
&<&\frac{1}{4\phi (d)}+\frac1{\phi (d)\log 80}+\frac{2\log\log
d}{\phi
(d)} +\frac{2\log\log 80}{\phi (d)\log 80}\\
&<&\frac{1.2}{\phi (d)} +\frac{2\log\log d}{\phi (d)}\\
&<&\frac{3\log\log d}{\phi (d)} .
\end{eqnarray*} It follows that
\begin{equation*}\sum_{\substack{d\mid
m\\ d>1}} S_d<  3\sum_{\substack{d\mid m\\
d>1}}\frac{\log\log d}{\phi (d)} .\end{equation*}

By Lemma \ref{lem3} and $p(m)>x$,
$$|{\cal Q}_m|<\frac {\log ( 2p(m) )}{\log 2}  :=g(m),$$
where ${\cal Q}_m$ is the set of all prime divisors of $m$.

 Since
$p(m)\ge 83$, it follows from Lemma \ref{lem2.1aa} that
\begin{eqnarray*}\sum_{\substack{d\mid m\\
d>1}}\frac{\log\log d}{\phi (d)} &<&\prod_{q\in {\cal
Q}_m} \left( 1 +\frac {\log\log q} {\phi (q)} +\frac {\log\log q^2}{\phi (q^2)} + \cdots \right) -1 \\
&\le &\prod_{q\in {\cal
Q}_m} \left( 1 +\frac {\log\log q} {\phi (q)} +\frac {(\log\log q)^2}{\phi (q^2)} + \cdots \right) -1 \\
&=& \prod_{q\in {\cal
Q}_m}  \left( 1 +\frac {\log\log q} {q-1} \frac 1{1-(\log\log q)/q}  \right) -1 \\
&\le & \left( 1 +\frac {\log\log p(m)} {p(m)-1} \frac 1{1-(\log\log p(m))/p(m)}  \right)^{g(m)} -1 \\
&:=& T-1.
\end{eqnarray*}
By $p(m)\ge 83$,
\begin{eqnarray*}\log T &=&g(m)\log \left( 1 +\frac {\log\log p(m)} {p(m)-1} \frac 1{1-(\log\log p(m))/p(m)}
\right)\\
&<& g(m)\frac {\log\log p(m)} {p(m)-1} \frac 1{1-(\log\log
p(m))/p(m)}\\
&=&  \frac {\log (2p(m))}{\log 2} \frac {\log\log p(m)} {p(m)-1}
\frac {p(m)}{p(m)-\log\log
p(m)} \\
&=&\frac 1{\log 2} \frac {\log (2p(m))}{\sqrt{p(m)}}
\frac{\log\log p(m)} {\sqrt{p(m)}} \frac{p(m)}{p(m)-1} \frac
{p(m)}{p(m)-\log\log
p(m)} \\
&\le &\frac 1{\log 2} \frac {\log 166}{\sqrt{83}} \frac{\log\log
83} {\sqrt{83}} \frac{83}{83-1} \frac {83}{83-\log\log
83} \\
&<& 0.137
\end{eqnarray*}
It follows that
$$\sum_{\substack{d\mid m\\
d>1}}\frac{\log\log d}{\phi (d)} < e^{0.137} -1<0.147.$$ By $x>80$
we have
\begin{eqnarray*}\sum_{\substack{d\mid
m\\ d>1}} S_d< 3\sum_{\substack{d\mid m\\
d>1}}\frac{\log\log d}{\phi (d)}< 3\times 0.147
 < 0.45.\end{eqnarray*} It follows that
\begin{eqnarray*}\log x &<&\log\log x+0.6432 +\sum_{\substack{d\mid m\\ d>1 }}
S_d\\
&<&\log\log x+0.6432 +0.45\\
&<&\log\log x+1.1.
\end{eqnarray*}
Since $x>80$, it follows that $$\log x -\log\log x >\log 80
-\log\log 80 >2.9, $$  a contradiction.

\section{Proof of Theorem \ref{thm3} for $x\le 80$}\label{sec3}

In this section, we always assume that $(x, y, z, m, n)$ is a
nontrivial solution of the  equation \eqref{eqn1a} in positive
integers $x, y, z, m, n$ with $x\le 80$, $1\le z\le x-y$ and $\gcd
(m, n)=1$. Then $m>n\ge 1$. Recall that
$$\gcd (x, y)=d_1,\quad x=x_1d_1,\quad y=y_1d_1.$$
Then $x_1$ and $y_1$ are both odd (see Section \ref{sec1}).
  For each prime $p\ge 3$ with $p\nmid x_1y_1$, let $\ell_p$ be the least positive integer $\ell $
  such that $p\mid x_1^\ell -y_1^\ell $.
Then $p\mid x_1^m-y_1^m$ if and only if $\ell_p \mid m$. Let
$p(m)$ be the least prime divisor of $m$.
 Now \eqref{eqn1a}
becomes
\begin{equation}\label{eqw1<80a}\phi \left(
zd_1^{m-1}\frac{x_1^m-y_1^m}{x_1-y_1}\right)
=zd_1^{n-1}\frac{x_1^n-y_1^n}{x_1-y_1}.\end{equation}

First, we give the following lemmas.

\begin{lemma}\label{lem1a} If $q$ is a prime factor of $m$, then $q\nmid x_1-y_1$. \end{lemma}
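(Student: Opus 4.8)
The plan is to argue by contradiction. Suppose $q$ is a prime divisor of $m$ with $q \mid x_1 - y_1$. Since $\gcd(x_1, y_1) = 1$, this forces $q \nmid x_1 y_1$, so the quantities $x_1^k - y_1^k$ make sense modulo powers of $q$. The strategy is to combine the divisibility information coming from Lemma~\ref{lem3a} (which gives a large power of $q$ dividing $x_1^{q-1} - y_1^{q-1}$) with a lifting-the-exponent–type computation that pins down $\nu_q\!\left(x_1^{q-1} - y_1^{q-1}\right)$, and then compare with the crude upper bound $x \le 80$ to reach a contradiction on the size of $q$ or of $d(m)$.

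First I would record that, since $q \mid x_1 - y_1$, we have $\ell_q = 1$, i.e. $q$ divides $x_1 - y_1$ itself rather than some proper cyclotomic piece. By the lifting-the-exponent lemma, $\nu_q\!\left(x_1^{q-1} - y_1^{q-1}\right) = \nu_q(x_1 - y_1) + \nu_q(q-1) = \nu_q(x_1-y_1)$, because $q \nmid q-1$. On the other hand, Lemma~\ref{lem3a} applied to this prime $q$ gives $q^{\frac12 \alpha_q d(m) - 1} \mid x_1^{q-1} - y_1^{q-1}$ (and the sharper exponent $\frac12\alpha_q d(m)$ when $q \nmid z$), so $\nu_q(x_1 - y_1) \ge \frac12\alpha_q d(m) - 1$. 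Combining, $q^{\frac12 \alpha_q d(m) - 1} \mid x_1 - y_1 \le x_1 - 1 < x \le 80$. Since $q \ge 3$ (by Lemma~\ref{lem1}, $m$ is odd, so $q$ is odd) and $\alpha_q \ge 1$, $d(m) \ge 2$, the exponent $\frac12\alpha_q d(m) - 1 \ge 0$; to get a contradiction I need it to be at least $1$, i.e. $\alpha_q d(m) \ge 4$. If instead $\alpha_q d(m) = 2$, then $\alpha_q = 1$ and $d(m) = 2$, so $m = q$ is prime; this degenerate case must be handled separately — I expect that here $\frac12\alpha_q d(m) = 1$ and the refined statement of Lemma~\ref{lem3a} (or a direct look at the reduced equation~\eqref{eqw1<80a} modulo $q$ together with $z \le x - y$) forces $q \mid x_1^n - y_1^n$ as well, hence $q \mid x_1^{\gcd(m,n)} - y_1^{\gcd(m,n)} = x_1 - y_1$ consistently, but then reading the congruences $\frac{x_1^m - y_1^m}{x_1 - y_1} \equiv m y_1^{m-1}$ and the analogous one for $n$ modulo $q$ (exactly as in the proof of Theorem~\ref{thm2a}) yields $q \mid m$ and $q \mid n$, contradicting $\gcd(m,n) = 1$.

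The main obstacle, as flagged above, is the boundary case where the exponent $\frac12\alpha_q d(m) - 1$ collapses to $0$, so that the size bound $x_1 - y_1 < 80$ gives nothing; there the contradiction has to come not from magnitude but from the $\gcd(m,n)=1$ constraint, via the mod-$q$ expansion of both $\frac{x_1^m-y_1^m}{x_1-y_1}$ and $\frac{x_1^n-y_1^n}{x_1-y_1}$. I would therefore split into the case $\alpha_q d(m) \ge 4$, dispatched immediately by $q^{\ge 1} \le x_1 - 1 \le 79$ forcing $q \le 79$ but then needing the companion divisibility to still be violated (or simply noting this branch is consumed later in Section~\ref{sec3} and only the structural contradiction is needed here), and the case $\alpha_q d(m) \le 2$, i.e. $m = q$, handled by the congruence argument. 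In writing it up I would lean on the already-established facts that $x_1, y_1$ are both odd with $x_1 \ge 3$, that $m$ is odd, and on the explicit expansion $\frac{x_1^k - y_1^k}{x_1 - y_1} = x_1^{k-1} + x_1^{k-2}y_1 + \cdots + y_1^{k-1} \equiv k\,y_1^{k-1} \pmod q$ valid whenever $q \mid x_1 - y_1$ and $q \nmid y_1$.
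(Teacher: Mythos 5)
Your proposal has a genuine gap in both of its branches, and the missing ingredient is the same in each: the paper's proof does not bound $x_1-y_1$ at all, but runs a $q$-adic valuation count on the two sides of \eqref{eqw1<80a}, and that count only closes because of a primitive prime divisor of $x_1^q-y_1^q$. Concretely, with $\alpha=\nu_q(m)$, $\mu=\nu_q(z)$, $\delta=\nu_q(d_1)$, the paper first applies lifting-the-exponent in the direction you do not use, namely $q^{\alpha}\mid (x_1^{q^{\alpha}}-y_1^{q^{\alpha}})/(x_1-y_1)$ and hence $q^{\alpha}\mid (x_1^m-y_1^m)/(x_1-y_1)$, and then invokes Carmichael's theorem to produce a primitive prime $p\equiv 1\pmod q$ of $x_1^q-y_1^q$ dividing that same quotient. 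The factor $p-1$ restores the power of $q$ lost when $\phi$ hits the $q$-part, giving $q^{\alpha+\mu+\delta(m-1)}\mid \phi\left(zd_1^{m-1}(x_1^m-y_1^m)/(x_1-y_1)\right)$, whereas the right-hand side has $q$-valuation exactly $\mu+\delta(n-1)$ because $(x_1^n-y_1^n)/(x_1-y_1)\equiv ny_1^{n-1}\not\equiv 0\pmod q$ (using $q\nmid n$ from $\gcd(m,n)=1$). Comparing exponents gives $\alpha\le \delta(n-m)\le 0$, the contradiction. Without the primitive divisor one only gets valuation at least $\alpha+\mu+\delta(m-1)-1$ on the left, which for $\delta=0$, $\alpha=1$ equals $\mu$ and matches the right-hand side exactly, so no contradiction follows.

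Against this, your first branch ($\alpha_q d(m)\ge 4$) cannot be closed as written: from $q^{\frac 12\alpha_q d(m)-1}\mid x_1-y_1<80$ you only bound $q$ and the exponent, and $q\mid x_1-y_1$ with $q$ small is not in itself contradictory; you concede as much (``needing the companion divisibility to still be violated'', or deferring to the rest of Section \ref{sec3}), which is a deferral rather than a proof. Your second branch ($m=q$) needs $q\mid (x_1^n-y_1^n)/(x_1-y_1)$ in order to extract $q\mid n$; but the statement you offer in its place, $q\mid x_1^n-y_1^n$, is automatic from $q\mid x_1-y_1$ and gives nothing, while the divisibility actually required is precisely the one the paper shows is false and exploits. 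The only way to force the right-hand side to absorb an extra power of $q$ is the valuation count above, which again requires the primitive divisor you never introduce. So the proposal identifies the right target congruence but does not supply the mechanism that makes the argument work.
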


\begin{proof} Suppose that $q$ is a prime with $q\mid m$ and $q\mid
x_1-y_1$. We will derive a contradiction. By Lemma \ref{lem1},
$q\ge 3$. Let
$$m=q^\alpha m_1,\quad x_1-y_1=q^\beta w_0,\quad z=q^\mu z_1,\quad d_1=q^\delta
t_1,$$ $$
 \alpha , \beta , m_1, w_0, z_1, t_1\in \mathbb{Z}^+, \ q\nmid m_1w_0z_1t_1,\ \mu , \delta \in\mathbb{Z}_{\ge 0}.$$
By induction on $k$,
$$x_1^{q^k}=y_1^{q^k}+q^{k+\beta } w_k, \quad w_k\in \mathbb{Z}.$$
It follows that
$$x_1^{q^\alpha}-y_1^{q^\alpha}=q^{ \alpha +\beta } w_\alpha , \quad w_\alpha \in \mathbb{Z}.$$
So
\begin{equation}\label{ac1} q^\alpha \mid \frac{x_1^{q^\alpha}-y_1^{q^\alpha}}{x_1-y_1} .\end{equation}
In view of  $q^\alpha \mid m$ and \eqref{ac1}, we have
$$q^{\alpha } \mid \frac{x_1^m-y_1^m}{x_1-y_1}.$$
By Carmichael's primitive divisor theorem (see
\cite{Carmichael1913}), the integer $x_1^q-y_1^q$
 has a primitive prime divisor $p\equiv 1\pmod{q}$. Since $q\mid
 m$, it follows that $$\frac{x_1^q-y_1^q}{x_1-y_1} \mid \frac{x_1^m-y_1^m}{x_1-y_1}.$$ So $$p\mid
 \frac{x_1^m-y_1^m}{x_1-y_1}.$$
Thus $$q^{\alpha  } p\mid \frac{x_1^m-y_1^m}{x_1-y_1}.$$ So
$$q^{\alpha  +\mu +\delta (m-1) } p\mid zd_1^{m-1}\frac{x_1^m-y_1^m}{x_1-y_1}.$$
Hence
$$q^{\alpha  +\mu +\delta (m-1)-1 }(q-1)(p-1) \mid \phi \left( zd_1^{m-1}\frac{x_1^m-y_1^m}{x_1-y_1}\right) .$$
Since $q\mid p-1$, it follows  that
\begin{equation}\label{eqn3} q^{\alpha  +\mu +\delta (m-1) }
\mid \phi \left( zd_1^{m-1}\frac{x_1^m-y_1^m}{x_1-y_1}\right)
.\end{equation} By \eqref{eqw1<80a} and \eqref{eqn3},
\begin{equation*} q^{\alpha  +\mu +\delta (m-1) } \mid zd_1^{n-1}\frac{x_1^n-y_1^n}{x_1-y_1}.\end{equation*}
So
\begin{equation}\label{a2} q^{\alpha  +\mu +\delta (m-1) } \mid q^{\mu +\delta (n-1) } \frac{x_1^n-y_1^n}{x_1-y_1}.\end{equation}
By $\gcd (m, n)=1$, we have $q\nmid n$. It follows from
$x_1-y_1=q^\beta x_0$ $(q\nmid x_0, \beta \ge 1)$ that
$$q\nmid \frac{x_1^n-y_1^n}{x_1-y_1}.$$
In view of \eqref{a2},
$$\alpha  +\mu +\delta (m-1)\le \mu +\delta (n-1).$$
Noting that $m>n$, we have $\alpha  =0$, a contradiction with
$\alpha  >0$. Therefore, $q\nmid x_1-y_1$.
\end{proof}

\begin{lemma}\label{lem4} We have
\begin{eqnarray*}x_1\frac{\phi (zd_1)}{z}< \prod_{\substack{\ell_p\mid m
\\ \ell_p
>1}} \left( 1+\frac 1{p-1}\right) .\end{eqnarray*}\end{lemma}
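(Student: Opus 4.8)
The plan is to exploit the equation \eqref{eqn1a} exactly as in Lemma \ref{lem2.1}, but this time factor out the contribution of $z$ and $d_1$ carefully rather than bounding everything crudely. Starting from \eqref{eqw1<80a}, I would first write $N=zd_1^{m-1}(x_1^m-y_1^m)/(x_1-y_1)$ and use $m>n$ together with $x_1>y_1$ to get
\[
x_1\le x_1^{m-n}\le\frac{(x_1^m-y_1^m)/(x_1-y_1)}{(x_1^n-y_1^n)/(x_1-y_1)},
\]
so that multiplying numerator and denominator by $zd_1^{m-1}$ and $zd_1^{n-1}$ respectively (after matching the powers of $d_1$) yields
\[
x_1\cdot d_1^{m-n}\le\frac{zd_1^{m-1}(x_1^m-y_1^m)/(x_1-y_1)}{zd_1^{n-1}(x_1^n-y_1^n)/(x_1-y_1)}=\frac{N}{\phi(N)}=\prod_{p\mid N}\Bigl(1+\frac1{p-1}\Bigr).
\]
Since $d_1^{m-n}\ge 1$ this already gives $x_1\le\prod_{p\mid N}(1+1/(p-1))$; the point of the lemma is to pull the primes dividing $zd_1$ out of the product and convert the resulting factor into $z/\phi(zd_1)$.

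Next I would split the primes $p\mid N$ into those dividing $zd_1$ and those not. For $p\mid zd_1$ the factor $(1+1/(p-1))^{-1}=1-1/p$, and the product of $1-1/p$ over the \emph{distinct} primes dividing $zd_1$ is exactly $\phi(zd_1)/(zd_1)$ up to the prime powers — more precisely, $\prod_{p\mid zd_1}(1-1/p)=\phi(zd_1)/(zd_1)$. So moving these factors to the left-hand side turns $x_1$ into $x_1\cdot zd_1\cdot\phi(zd_1)/(zd_1)=x_1\phi(zd_1)$, but I also have the spare $d_1^{m-n}$ on the left which, combined with the $d_1$ I introduced, should be arranged to cancel the $d_1$ in $zd_1$, leaving $x_1\phi(zd_1)/z$ on the left. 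For the primes $p\mid N$ with $p\nmid zd_1$: such a $p$ must divide $(x_1^m-y_1^m)/(x_1-y_1)$, hence $p\nmid x_1y_1$ and $\ell_p\mid m$; moreover by Lemma \ref{lem1a} the primes dividing $x_1-y_1$ (in particular those with $\ell_p=1$) do not divide $m$, so in fact $\ell_p>1$ for every such $p$ that survives. This identifies the remaining product as a subproduct of $\prod_{\ell_p\mid m,\ \ell_p>1}(1+1/(p-1))$, giving the claimed inequality (strict, because at least the primitive prime divisor of $(x_1^m-y_1^m)/(x_1-y_1)$ appears, or because $d_1^{m-n}\ge 1$ with room to spare).

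The main obstacle I anticipate is bookkeeping the powers of $d_1$ and $z$ so that the left side comes out as precisely $x_1\phi(zd_1)/z$ rather than something off by a factor of $d_1$ or $\gcd(z,d_1)$: one has to be careful that $\phi(zd_1)$ refers to $zd_1$ as a single integer (so shared prime factors of $z$ and $d_1$ are not double-counted), and that the $d_1^{m-1}$ versus $d_1^{n-1}$ mismatch contributes exactly one extra factor of $d_1^{m-n}\ge d_1$ to absorb the $d_1$ coming from writing $zd_1/\phi(zd_1)$. The cleanest route is probably to avoid the subtraction of powers altogether: directly compute
\[
\frac{N}{\phi(N)}=\frac{zd_1^{m-1}}{\phi(zd_1^{m-1})}\cdot\frac{(x_1^m-y_1^m)/(x_1-y_1)}{\phi\bigl((x_1^m-y_1^m)/(x_1-y_1)\bigr)}\cdot(\text{correction for shared primes}),
\]
note $\phi(zd_1^{m-1})=d_1^{m-1}\phi(zd_1)/d_1\cdot(\dots)$ is awkward, so instead observe $zd_1^{m-1}/\phi(zd_1^{m-1})=\prod_{p\mid zd_1}(1-1/p)^{-1}=zd_1/\phi(zd_1)$ since the exponent of a prime does not affect which primes occur, and then $x_1\cdot d_1^{m-n}\cdot\phi(zd_1)/(zd_1)\le\prod_{p\mid(x_1^m-y_1^m)/(x_1-y_1),\ p\nmid zd_1}(1+1/(p-1))$; using $d_1^{m-n}\ge d_1$ and discarding it gives $x_1\phi(zd_1)/z$ on the left as required, and the right side is dominated by $\prod_{\ell_p\mid m,\ \ell_p>1}(1+1/(p-1))$ via Lemma \ref{lem1a}. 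Strictness is free since $d_1^{m-n}/d_1\ge 1$ is typically $>1$, or else the primitive divisor of $(x_1^m-y_1^m)/(x_1-y_1)$ forces a strict gap between $x_1^{m-n}$ and the ratio.
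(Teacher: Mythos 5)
Your proposal follows essentially the same route as the paper's proof: start from the ratio bound of Lemma \ref{lem2.1}, peel off the primes dividing $zd_1$ as the factor $zd_1/\phi(zd_1)$, use $x=x_1d_1$ to land on $x_1\phi(zd_1)/z$, and identify the surviving primes as those with $\ell_p\mid m$ and $\ell_p>1$. The one step that is not fully justified as written is the claim that $\ell_p>1$: Lemma \ref{lem1a} only tells you that a prime $p\mid x_1-y_1$ cannot divide $m$, and to conclude that such a $p$ cannot divide $(x_1^m-y_1^m)/(x_1-y_1)$ you still need the congruence $(x_1^m-y_1^m)/(x_1-y_1)\equiv my_1^{m-1}\pmod{p}$ (valid when $p\mid x_1-y_1$, together with $p\nmid y_1$), which is exactly how the paper closes this point.
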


\begin{proof} By Lemma \ref{lem2.1},  we
have
\begin{eqnarray*}x&<&\prod_{p\mid z(x^m-y^m)/(x-y)} \left( 1+\frac 1{p-1}\right) \\
&\le &\prod_{p\mid zd_1} \left( 1+\frac 1{p-1}\right) \prod_{p\mid (x_1^m-y_1^m)/(x_1-y_1)} \left( 1+\frac 1{p-1}\right) \\
&=&\frac{zd_1}{\phi (zd_1)}  \prod_{p\mid (x_1^m-y_1^m)/(x_1-y_1)}
\left( 1+\frac 1{p-1}\right).\end{eqnarray*} It follows from
$x=d_1x_1$ that
\begin{eqnarray}\label{x}x_1\frac{\phi (zd_1)}{z}<  \prod_{p\mid (x_1^m-y_1^m)/(x_1-y_1)} \left( 1+\frac 1{p-1}\right) .\end{eqnarray}

Now we prove that, if
\begin{equation}\label{eqn3xx}p\mid \frac{x_1^m-y_1^m}{x_1-y_1},\end{equation}
then $\ell_p \mid m$ and $\ell_p >1$.

 Suppose that \eqref{eqn3xx} holds. Then
$p\mid x_1^m-y_1^m$.  Noting that $\gcd (x_1, y_1)=1$, we have
$p\nmid x_1y_1$.  By the properties of $\ell_p$ and $p\mid
x_1^m-y_1^m$, we have $\ell_p \mid m$.
 Now we prove  $\ell_p
>1$ by a contradiction. Suppose that $\ell_p =1$. Then $p\mid
x_1-y_1$.  So
$$\frac{x_1^m-y_1^m}{x_1-y_1}=x_1^{m-1}+x_1^{m-2}y_1+\cdots +x_1
y_1^{m-2}+y_1^{m-1}\equiv my_1^{m-1}\pmod p.$$ It follows from
\eqref{eqn3xx} that $p\mid m$. This contradicts Lemma \ref{lem1a}
since $p\mid m$ and $p\mid x_1-y_1$.

Now we have proved that, if \eqref{eqn3xx} holds, then $\ell_p$
exists, $\ell_p \mid m$ and $\ell_p >1$. Lemma \ref{lem4} follows
from \eqref{x}.
\end{proof}

\begin{lemma}\label{lem4a} If $x\le 80$ and $d$ is a divisor
of $m$ with $d\ge 173$, then
$$ \log \prod_{ \ell_p =d } \left( 1+\frac 1{p-1}\right) < \frac {3.3\log\log d}{\phi
(d)}.$$
\end{lemma}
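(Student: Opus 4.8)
The plan is to bound $\log\prod_{\ell_p=d}\left(1+\frac1{p-1}\right)$ by a constant multiple of $S_d=\sum_{\ell_p=d}\frac1p$ and then invoke Lemma~\ref{lem2.1a}. The starting point is that every prime $p$ counted in the product has $\ell_p=d$, and since $\ell_p\mid p-1$ by Fermat's little theorem, we get $d\mid p-1$, hence $p\ge d+1\ge 174$. Therefore $\frac{p}{p-1}\le\frac{174}{173}<1.006$, so $\frac1{p-1}=\frac1p\cdot\frac{p}{p-1}<\frac{1.006}{p}$, and using $\log(1+t)<t$ for $t>0$ I would deduce
$$\log\prod_{\ell_p=d}\left(1+\frac1{p-1}\right)=\sum_{\ell_p=d}\log\left(1+\frac1{p-1}\right)<\sum_{\ell_p=d}\frac1{p-1}<1.006\,S_d.$$

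Next I would apply Lemma~\ref{lem2.1a}, whose hypotheses hold here because $d\ge 173\ge 40$ and, by Section~\ref{sec1}, $x\ge x_1\ge 3$; this gives
$$S_d<\frac1{4d}+\frac1{d\log(d+1)}+\frac{2\log\log d}{\phi(d)}+\frac{2\log\log x}{\phi(d)\log d}.$$
The remaining work is to bound each of these four terms by a small multiple of $\frac{\log\log d}{\phi(d)}$, using $\phi(d)\le d$, $\log(d+1)\ge\log 174>5.15$, $\log d\ge\log 173>5.15$, $\log\log d\ge\log\log 173>1.63$, and the standing hypothesis $x\le 80$ of this section (so $\log\log x\le\log\log 80<1.48$). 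This yields $\frac1{4d}<0.16\,\frac{\log\log d}{\phi(d)}$, $\frac1{d\log(d+1)}<0.12\,\frac{\log\log d}{\phi(d)}$, and $\frac{2\log\log x}{\phi(d)\log d}<0.36\,\frac{\log\log d}{\phi(d)}$; adding these to $\frac{2\log\log d}{\phi(d)}$ gives $S_d<2.64\,\frac{\log\log d}{\phi(d)}$, and multiplying by $1.006$ leaves $\log\prod_{\ell_p=d}\left(1+\frac1{p-1}\right)<2.66\,\frac{\log\log d}{\phi(d)}<\frac{3.3\log\log d}{\phi(d)}$.

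I do not expect a real obstacle: this is essentially the multiplicative analogue of Lemma~\ref{lem2.1a}, and the only points needing care are checking that the hypotheses of Lemma~\ref{lem2.1a} are in force and that the factor $\log\log x$ stays bounded --- which is precisely where the restriction $x\le 80$ is used. Everything else is routine arithmetic with absolute constants, and there is ample slack between the $2.66$ actually obtained and the $3.3$ claimed, so the numerical choices are not delicate.
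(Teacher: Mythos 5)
Your proposal is correct and follows essentially the same route as the paper: bound the logarithm of the product by $\sum_{\ell_p=d}1/(p-1)$, compare this with $S_d$, invoke Lemma~\ref{lem2.1a} (valid since $d\ge 173\ge 40$), and finish with elementary estimates using $d\ge 173$, $x\le 80$ and $\phi(d)\le d$. The only cosmetic difference is that the paper passes from $\sum 1/(p-1)$ to $1/d+S_d$ via $\sum 1/(p(p-1))\le\sum_{n>d}1/(n(n-1))$, whereas you use the factor $p/(p-1)\le 174/173$; both yield the stated constant with room to spare.
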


\begin{proof} Suppose that $p$ is a prime with $\ell_p=d$. By
the definition of $\ell_p$, we have $p\nmid x_1y_1$. By Fermat's
theorem, $p\mid x_1^{p-1}-y_1^{p-1}$. It follows that $d\mid p-1$
when $\ell_p =d$. So $p\ge d+1$ when $\ell_p =d$. It is clear that
\begin{eqnarray*} \log \prod_{ \ell_p =d } \left( 1+\frac
1{p-1}\right) &=& \sum_{\ell_p =d} \log \left( 1+\frac
1{p-1}\right)\\
&<& \sum_{\ell_p =d} \frac 1{p-1}\\
&=& \sum_{\ell_p =d} \frac 1{p(p-1)} +\sum_{\ell_p =d} \frac 1{p}\\
&<& \sum_{n=d+1}^\infty \frac 1{n(n-1)} +\sum_{\ell_p =d} \frac 1{p}\\
&=& \frac 1d +S_d,
\end{eqnarray*}
where
$$S_d =\sum_{\ell_p =d} \frac 1{p}.$$
Since $d\ge  173$  and $x\ge x_1 \ge 3$, it follows from Lemma
\ref{lem2.1a} that
\begin{eqnarray*} S_d < \frac{1}{4d}+\frac 1{d\log (d+1)} +\frac {2\log\log d}{\phi
(d)} +\frac{2\log\log x}{\phi (d)\log d} .\end{eqnarray*} Noting
that
 $d\ge  173$ and $x\le 80$, we have
 \begin{eqnarray*}\log \prod_{ \ell_p =d } \left( 1+\frac 1{p-1}\right) &\le &
\frac{5}{4d}+\frac
1{d\log (d+1)} +\frac {2\log\log d}{\phi (d)} +\frac{2\log\log x}{\phi (d)\log d}\\
&\le & \frac{5}{4\phi (d)}+\frac 1{\phi (d)\log 174} +\frac
{2\log\log d}{\phi (d)} +\frac{2\log\log 80}{\phi (d)\log
173}\\
&<& \frac{2.1}{\phi (d)} +\frac {2\log\log d}{\phi (d)}\\
&<& \frac {3.3\log\log d}{\phi (d)}.\end{eqnarray*}
 This completes the
proof of Lemma \ref{lem4a}.
\end{proof}

\begin{lemma}\label{lem4ax} If $x\le 80$ and $d$ is a divisor
of $m$ with $p(d)\ge 173$, where $p(d)$ is the least prime divisor
of $d$, then
$$ \log \prod_{ \ell_p =d } \left( 1+\frac 1{p-1}\right) < 0.032.$$
\end{lemma}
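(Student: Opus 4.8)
The plan is to deduce this immediately from Lemma~\ref{lem4a}. First note that the hypothesis $p(d)\ge 173$ means in particular that $d$ has a prime divisor, so $d>1$, and that every prime divisor of $d$ is at least $173$; consequently $d\ge 173$. Hence, since also $x\le 80$, Lemma~\ref{lem4a} applies and gives
$$\log\prod_{\ell_p=d}\left(1+\frac1{p-1}\right)<\frac{3.3\log\log d}{\phi(d)}.$$
Thus it suffices to show $3.3\log\log d/\phi(d)<0.032$ for every divisor $d$ of $m$ with $p(d)\ge 173$, and I would do this by splitting on whether $d$ is prime.

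If $d$ is a prime, then $d\ge 173$ and $\phi(d)=d-1$. I would observe that the function $t\mapsto(\log\log t)/(t-1)$ is decreasing on $[173,\infty)$: the sign of its derivative is that of $\tfrac{t-1}{t\log t}-\log\log t$, and for $t\ge 173$ one has $\tfrac{t-1}{t\log t}<\tfrac1{\log 173}<1<\log\log 173\le\log\log t$. Therefore $3.3\log\log d/\phi(d)\le 3.3\log\log 173/172$, and since $\log\log 173<1.64$ this is less than $0.032$.

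If $d$ is composite, then, all prime divisors of $d$ being at least $173$, we have $d\ge 173^2=29929$. Here I would use Lemma~\ref{lem3ab}, rewritten as $\phi(d)\ge d/\bigl(1.79\log\log d+2.5/\log\log d\bigr)$, to obtain
$$\frac{3.3\log\log d}{\phi(d)}\le\frac{5.907(\log\log d)^2+8.25}{d}.$$
A short computation with derivatives shows that $(\log\log d)^2/d$ and $1/d$ are both decreasing for $d\ge 29929$, so the right-hand side attains its maximum on $[29929,\infty)$ at $d=29929$; evaluating there gives a value well below $0.032$, which finishes this case.

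The only delicate point is the prime case: the bound there is tight (the value at $d=173$ is about $0.0315$), so the threshold $173$ in the hypothesis is essentially forced by this method, and the estimate $\log\log 173<1.64$ must be verified with a little care; the composite case, by contrast, has enormous room to spare. I would also flag that the statement relies on the convention that $p(d)$ is defined only for $d>1$ (equivalently, on reading $p(d)\ge 173$ as ``$d>1$ and every prime factor of $d$ is at least $173$''), which is precisely what rules out the $d=1$ situation, where the claimed inequality would fail.
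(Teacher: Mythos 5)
Your proof is correct, and it diverges from the paper's in how the composite case is handled. Both arguments begin identically by invoking Lemma~\ref{lem4a} to reduce the problem to bounding $3.3\log\log d/\phi(d)$, and your prime case is in substance exactly the paper's final step. But for $d$ with more than one prime factor the paper does not split cases at all: it writes $d=p_1\cdots p_t$, uses $\phi(p_1\cdots p_t)\ge(p_1-1)\cdots(p_t-1)$ together with the sub-multiplicativity of $\log\log$ on $[78,\infty)$ (Lemma~\ref{lem2.1aa}) to get $3.3\log\log d/\phi(d)\le 3.3\prod_i(\log\log p_i)/(p_i-1)$, and then observes that each factor beyond the first is $<1$, so everything collapses to the single bound $3.3(\log\log 173)/172<0.032$. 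Your route instead exploits that a composite $d$ with $p(d)\ge 173$ satisfies $d\ge 173^2$ and applies the Rosser--Schoenfeld type bound of Lemma~\ref{lem3ab}; this avoids Lemma~\ref{lem2.1aa} entirely (at the cost of a monotonicity check and one more numerical evaluation), while the paper's version is more uniform and shows more transparently why the constant is governed solely by the smallest admissible prime. Your closing remarks are also on point: the prime case is where the bound is tight (about $0.0315$ at $d=173$), and the hypothesis $p(d)\ge 173$ must indeed be read as excluding $d=1$ — which is consistent with how the lemma is applied in the paper, always to $d\in D_2$ with $d>1$.
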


\begin{proof} Let
$$d=p_1 p_2 \cdots p_t,$$
where $p_1\le p_2\le \cdots \le p_t$ are primes. It is easy to see
that
\begin{equation}\label{b1}\phi (p_1 p_2 \cdots p_t)\ge (p_1-1)\cdots
(p_t-1).\end{equation} By Lemma \ref{lem2.1aa} and $p(d)\ge 173$,
\begin{equation}\label{b2}\log\log d=\log\log  (p_1 p_2 \cdots p_t)\le (\log\log  p_1)\cdots
(\log\log  p_t).\end{equation} In view of Lemma \ref{lem4a},
\eqref{b1} and \eqref{b2},
\begin{eqnarray*}
\log \prod_{ \ell_p =d } \left( 1+\frac 1{p-1}\right) &<& \frac
{3.3\log\log d}{\phi (d)}\\
&\le & 3.3 \frac {\log\log p_1}{p_1-1} \cdots \frac {\log\log
p_t}{p_t-1}.
\end{eqnarray*}
Since $p_i\ge p(d)\ge 173$ $(1\le i\le t)$, it follows that
$$\frac {\log\log p_i}{p_i-1}\le \frac {\log\log 173}{172}<1,\quad 1\le i\le t.$$
Therefore,
$$\log \prod_{ \ell_p =d } \left( 1+\frac 1{p-1}\right)< 3.3 \frac {\log\log p_1}{p_1-1}\le 3.3\frac {\log\log
173}{172}<0.032.$$ This completes the proof of Lemma \ref{lem4ax}.
\end{proof}

\begin{lemma}\label{lem4b} If $q$ is a prime factor of $m$ with $q<173$, then $q^6\nmid x_1^{q-1}-y_1^{q-1}$. \end{lemma}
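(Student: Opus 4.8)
The plan is a contradiction argument. Assume $q\mid m$, $q<173$, and $q^{6}\mid x_{1}^{q-1}-y_{1}^{q-1}$; since $x\le 80$ and $x_{1}$ is odd we have $x_{1}\le 79$, and the goal is to contradict this size bound.

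\emph{Reduction via lifting the exponent.} From $\gcd(x_{1},y_{1})=1$ and $q\mid x_{1}^{q-1}-y_{1}^{q-1}$ we get $q\nmid x_{1}y_{1}$, so $\ell_{q}$ is defined and $\ell_{q}\mid q-1$; by Lemma~\ref{lem1a} (applicable since $q\mid m$) we have $q\nmid x_{1}-y_{1}$, hence $\ell_{q}\ge 2$. Writing $q-1=\ell_{q}s$ with $q\nmid s$ and applying the lifting-the-exponent lemma to $a=x_{1}^{\ell_{q}}$, $b=y_{1}^{\ell_{q}}$ (legitimate because $q\mid a-b$ and $q\nmid ab$) gives
$$\nu_{q}\bigl(x_{1}^{q-1}-y_{1}^{q-1}\bigr)=\nu_{q}(a^{s}-b^{s})=\nu_{q}(a-b)=\nu_{q}\bigl(x_{1}^{\ell_{q}}-y_{1}^{\ell_{q}}\bigr),$$
so $q^{6}\mid x_{1}^{\ell_{q}}-y_{1}^{\ell_{q}}$. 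For any prime $r\mid\ell_{q}$ we have $\ell_{q}/r<\ell_{q}$, so $q\nmid x_{1}^{\ell_{q}/r}-y_{1}^{\ell_{q}/r}$ by minimality of $\ell_{q}$; hence the whole $q$-power of $x_{1}^{\ell_{q}}-y_{1}^{\ell_{q}}$ sits in its primitive divisor part $\Phi_{\ell_{q}}(x_{1},y_{1})$ (the homogenization of the $\ell_{q}$-th cyclotomic polynomial, of degree $\phi(\ell_{q})$), so $q^{6}\mid\Phi_{\ell_{q}}(x_{1},y_{1})$. Equivalently, $(x_{1}y_{1}^{-1})^{\ell_{q}}\equiv1\pmod{q^{6}}$ while $x_{1}y_{1}^{-1}$ has order $\ell_{q}$ modulo $q$, so $x_{1}y_{1}^{-1}$ has order exactly $\ell_{q}$ modulo $q^{6}$, i.e.\ it is one of the $\phi(\ell_{q})$ primitive $\ell_{q}$-th roots of unity in $(\mathbb{Z}/q^{6}\mathbb{Z})^{*}$.

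\emph{Size and a finite check.} Since $q\equiv1\pmod{\ell_{q}}$ forces $q\ge\ell_{q}+1$, and since $q^{6}\le\Phi_{\ell_{q}}(x_{1},y_{1})\le(x_{1}+y_{1})^{\phi(\ell_{q})}<158^{\phi(\ell_{q})}$, we get $6\log q<\phi(\ell_{q})\log158$. For $\ell_{q}\in\{2,3,4,6\}$ the explicit formulas $\Phi_{2}=X+Y$, $\Phi_{3}=X^{2}+XY+Y^{2}$, $\Phi_{4}=X^{2}+Y^{2}$, $\Phi_{6}=X^{2}-XY+Y^{2}$ together with $x_{1}\le79$ already give $\Phi_{\ell_{q}}(x_{1},y_{1})<q^{6}$ (for instance $\Phi_{4}(x_{1},y_{1})\le79^{2}+78^{2}<5^{6}$), a contradiction, and many larger $\ell_{q}$ are excluded by the crude inequality. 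For each of the remaining (explicitly listable) pairs $(q,\ell_{q})$ with $q<173$ and $\ell_{q}\mid q-1$ I would rule out $q^{6}\mid x_{1}^{\ell_{q}}-y_{1}^{\ell_{q}}$ directly: by the previous paragraph $x_{1}y_{1}^{-1}\bmod q^{6}$ must be one of at most $\phi(\ell_{q})\le q-2<171$ specific residues, so one only has to test these against the at most $78$ admissible values of $y_{1}$ and check that no resulting $x_{1}$ lies in $[1,79]$. (The list can be shortened by feeding $q^{6}\mid x_{1}^{\ell_{q}}-y_{1}^{\ell_{q}}$ back into \eqref{eqn1a} and comparing $\nu_{q}$ of the two sides via Carmichael's primitive divisor theorem, exactly as in the proof of Lemma~\ref{lem3a}: in almost every configuration this yields an inequality of the shape $0\ge\tfrac12\alpha_{q}d(m)+(\text{nonnegative})$, contradicting $q\mid m$, and the few surviving configurations, essentially $m=q$ with $q\mid z$, $q\nmid d_{1}$, are pinned down by Lemma~\ref{lem4}.)

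The conceptual content — the lifting-the-exponent reduction to $q^{6}\mid\Phi_{\ell_{q}}(x_{1},y_{1})$ — is short; the real work, and the main obstacle, is the last step: the explicit elimination of the pairs $(q,\ell_{q})$ that survive the size bound. This is where the thresholds $173$ and $80$ are calibrated, and, unlike the range $x>80$ where the estimates close by themselves, here one must carry out a genuine (if routine) finite verification.
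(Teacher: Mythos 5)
Your reduction is correct and, at bottom, lands where the paper does: after invoking Lemma~\ref{lem1a} (so $q\nmid x_1-y_1$) and Lemma~\ref{lem1} (so $q\ge 3$), the paper's entire proof is the statement that a computer enumeration over all pairs $1\le y_1<x_1\le 80$ and all odd primes $p<173$ finds no instance of $p\nmid x_1-y_1$ together with $p^6\mid x_1^{p-1}-y_1^{p-1}$. What you put in front of that check is sound and genuinely shrinks it: the lifting-the-exponent step $\nu_q(x_1^{q-1}-y_1^{q-1})=\nu_q(x_1^{\ell_q}-y_1^{\ell_q})$ is legitimate (with $q$ odd, $q\nmid x_1y_1$, $q\nmid (q-1)/\ell_q$), the localization of the whole $q$-part in $\Phi_{\ell_q}(x_1,y_1)$ follows correctly from the minimality of $\ell_q$, and the bound $q^6\le (x_1+y_1)^{\phi(\ell_q)}$ does dispose of the small-$\phi(\ell_q)$ cases exactly as you compute. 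The paper uses none of this; it brute-forces all pairs against the odd primes below $173$, which is cheaper to program and needs no cyclotomic bookkeeping. Your version buys a much smaller residual search --- only the pairs $(q,\ell_q)$ surviving $6\log q<\phi(\ell_q)\log 158$, and for each of those only $\phi(\ell_q)$ admissible residues of $x_1y_1^{-1}$ modulo $q^6$ --- at the cost of more theory. Either way the lemma is, in the end, a finite verification.

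The one point to be clear about: the substance of the lemma \emph{is} the outcome of that finite check, and your write-up describes the check without executing it (``I would rule out\dots''). The paper is no more explicit about its computation, but it does assert the result; until you run the residual verification and report that it is empty, your argument shows only that the lemma is decidable by a bounded computation, not that it is true. The parenthetical suggestion of feeding $q^6\mid x_1^{\ell_q}-y_1^{\ell_q}$ back into the main equation is unnecessary and is the vaguest part of the plan; the direct residue test you describe suffices.
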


\begin{proof} By Lemma \ref{lem1a}, $q\nmid x_1-y_1$. Since $x\le
80$, it follows that $x_1\le 80$. By Lemma \ref{lem1}, $q\ge 3$. A
simple calculation by a computer shows that, for any integers
$1\le y_1<x_1\le 80$,  there are no odd primes $p<173$ such that
$$p\nmid x_1-y_1, \quad  p^6\mid x_1^{p-1}-y_1^{p-1}.$$
Hence $q^6\nmid x_1^{q-1}-y_1^{q-1}$. This completes the proof of
Lemma \ref{lem4b}.
\end{proof}

\begin{lemma}\label{lem4ba} If $q$ is a prime factor of $m$ and
$k$ is a positive integer such that
$$q^k \nmid x_1^{q-1}-y_1^{q-1},$$
then there are at most $k$ distinct primes $p$ with $\ell_{p} \mid
m$ and $q\mid \ell_p$.
\end{lemma}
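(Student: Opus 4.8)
The plan is to assume there are $s$ distinct primes $p_1,\dots,p_s$ with $\ell_{p_i}\mid m$ and $q\mid\ell_{p_i}$, turn this into a divisibility statement for $\phi$ applied to the left-hand side of \eqref{eqw1<80a}, and read off $s\le k$ by comparing $q$-adic valuations. First I would record the elementary properties of these primes. Since $q$ is an odd prime (and $2\nmid m$ by Lemma \ref{lem1}) and $q\mid\ell_{p_i}$, we have $\ell_{p_i}\ge q\ge 3$, so by the very definition of $\ell_{p_i}$ we get $p_i\nmid x_1y_1$ and $p_i\nmid x_1-y_1$. By Fermat's theorem $\ell_{p_i}\mid p_i-1$, hence $q\mid p_i-1$, so $p_i\neq q$ and $q^s\mid\prod_{i=1}^{s}(p_i-1)$. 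Moreover $p_i\mid x_1^{\ell_{p_i}}-y_1^{\ell_{p_i}}\mid x_1^m-y_1^m$ while $p_i\nmid x_1-y_1$, so $p_i\mid (x_1^m-y_1^m)/(x_1-y_1)$; since the $p_i$ are distinct, $\prod_{i=1}^{s}p_i$ divides $(x_1^m-y_1^m)/(x_1-y_1)$.

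Next, writing $\delta=\nu_q(z)$ and $\beta=\nu_q(d_1)$, we have $q^{\delta+(m-1)\beta}\mid zd_1^{m-1}$, and since $\prod_i p_i$ is coprime to $q$, this gives $q^{\delta+(m-1)\beta}\prod_{i=1}^{s}p_i$ divides $zd_1^{m-1}(x_1^m-y_1^m)/(x_1-y_1)$. Applying $\phi$ (monotone for divisibility) and using \eqref{eqw1<80a} yields
$$\phi\big(q^{\delta+(m-1)\beta}\big)\prod_{i=1}^{s}(p_i-1)\ \Big|\ zd_1^{n-1}\,\frac{x_1^n-y_1^n}{x_1-y_1}.$$
Comparing $q$-adic valuations, using $\nu_q(\phi(q^e))=\max(e-1,0)$ and $q^s\mid\prod_i(p_i-1)$, I obtain
$$\max\!\big(\delta+(m-1)\beta-1,\ 0\big)+s\ \le\ \delta+(n-1)\beta+\nu_q\!\Big(\frac{x_1^n-y_1^n}{x_1-y_1}\Big).$$
Because $m>n$, the $\beta$-terms on the right can only help: in the case $\delta+(m-1)\beta\ge 1$ this reduces to $s\le 1-(m-n)\beta+\nu_q\big((x_1^n-y_1^n)/(x_1-y_1)\big)\le 1+\nu_q\big((x_1^n-y_1^n)/(x_1-y_1)\big)$, and in the case $\delta+(m-1)\beta=0$ (forcing $\beta=0$ since $m\ge 2$) it reduces to $s\le\nu_q\big((x_1^n-y_1^n)/(x_1-y_1)\big)$.

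Finally I would prove the bound $\nu_q\big((x_1^n-y_1^n)/(x_1-y_1)\big)\le\nu_q\big(x_1^{q-1}-y_1^{q-1}\big)$. Here $\gcd(m,n)=1$ and $q\mid m$ force $q\nmid n$. If $q\mid x_1y_1$ then $q$ divides exactly one of $x_1,y_1$ (they are coprime), so $q$ divides neither $x_1^n-y_1^n$ nor $x_1^{q-1}-y_1^{q-1}$ and both valuations vanish. Otherwise I apply the lifting-the-exponent lemma to $e_0$, the multiplicative order of $x_1y_1^{-1}$ modulo $q$ (so $e_0\mid q-1$, hence $q\nmid e_0$): it gives $\nu_q(x_1^{q-1}-y_1^{q-1})=\nu_q(x_1^{e_0}-y_1^{e_0})$ and $\nu_q(x_1^n-y_1^n)\in\{0,\nu_q(x_1^{e_0}-y_1^{e_0})\}$, while $\nu_q(x_1-y_1)\ge 0$ finishes it. Plugging the hypothesis $\nu_q(x_1^{q-1}-y_1^{q-1})\le k-1$ into the two cases above gives $s\le k$ in each, as required.

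I expect the only delicate point to be this $q$-adic bookkeeping — in particular the observation that any power of $q$ dividing $z$ or $d_1$ is absorbed \emph{in our favour} precisely because $m>n$, and the harmless degenerate case $q\mid x_1y_1$; everything else (the divisibilities, multiplicativity of $\phi$, the passage through \eqref{eqw1<80a}) is routine. One could alternatively invoke Lemma \ref{lem1a} to discard the possibility $q\mid x_1-y_1$, but the argument does not need it.
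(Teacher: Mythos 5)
Your proof is correct and follows essentially the same route as the paper's: push the primes $p_i$ (each with $q\mid p_i-1$) through $\phi$ of the left-hand side of \eqref{eqw1<80a} and compare powers of $q$ on both sides of the equation to force a high power of $q$ into $(x_1^n-y_1^n)/(x_1-y_1)$. The only divergence is the final transfer to $x_1^{q-1}-y_1^{q-1}$, where you use lifting-the-exponent (together with $q\nmid n$) while the paper argues by contradiction via Euler's theorem and the fact that $q^k\mid x_1^a-y_1^a$ and $q^k\mid x_1^b-y_1^b$ imply $q^k\mid x_1^{\gcd(a,b)}-y_1^{\gcd(a,b)}$; both steps are routine and your $q$-adic bookkeeping (including the absorption of $\nu_q(zd_1^{m-1})$ using $m>n$) is sound.
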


\begin{proof} We prove the lemma by a contradiction.
Suppose that there are at least $k+1$ primes $p$ with $\ell_p \mid
m$ and $q\mid \ell_p$. Let $p_1, p_2, \dots , p_{k+1}$ be $k+1$
distinct primes with $\ell_{p_i} \mid m$ and $q\mid \ell_{p_i}$
$(1\le i\le k+1)$. Then, for $1\le i\le k+1$,
$$p_i\mid \frac{x_1^{\ell_{p_i} } - y_1^{\ell_{p_i} }}{ x_1-y_1
}$$ and
$$\frac{x_1^{\ell_{p_i} } - y_1^{\ell_{p_i} }}{ x_1-y_1
} \mid \frac{x_1^{m } - y_1^{m }}{ x_1-y_1 } .$$ It follows that
$$p_1p_2\cdots p_{k+1} \mid \frac{x_1^{m } - y_1^{m }}{ x_1-y_1 } .$$
Let
$$ z=q^\mu z_1,\quad d_1=q^\delta
t_1,$$ $$
 z_1, t_1\in \mathbb{Z}^+, \ q\nmid z_1t_1,\ \mu , \delta \in \mathbb{Z}_{\ge 0}.$$
Then
$$q^{\mu +(m-1)\delta }p_1p_2\cdots p_{k+1} \mid zd_1^{m-1}\frac{x_1^{m } - y_1^{m }}{ x_1-y_1 } .$$
It follows that
\begin{equation}\label{eqn3xxx}\phi (q^{\mu +(m-1)\delta }) (p_1-1)(p_2-1)\cdots (p_{k+1}-1)
\mid \phi \left( zd_1^{m-1}\frac{x_1^{m } - y_1^{m }}{ x_1-y_1
}\right) .\end{equation}By the definition of $\ell_p$, $p\nmid
x_1y_1$. Thus $p_i\nmid x_1y_1$ $(1\le i\le k+1)$.  By Fermat's
theorem, $p_i \mid x_1^{p_i-1}-y_1^{p_i-1}$. So $\ell_{p_i}\mid
p_i-1$ $(1\le i\le k+1)$. Since $q\mid \ell_{p_i}$, it follows
that  $q\mid p_i-1$ $(1\le i\le k+1)$. By \eqref{eqn3xxx},
$$q^{\mu +(m-1)\delta +k}\mid \phi \left( zd_1^{m-1}\frac{x_1^m-y_1^m}{x_1-y_1}\right) .$$
It follows from \eqref{eqw1<80a} that
$$q^{\mu +(m-1)\delta +k}\mid  zd_1^{n-1}\frac{x_1^n-y_1^n}{x_1-y_1} .$$
So
\begin{equation}\label{ew1}q^{\mu +(m-1)\delta +k} \mid q^{\mu +(n-1)\delta } \frac{x_1^n-y_1^n}{x_1-y_1}.\end{equation}
Noting that $m>n$, we have $\mu +(m-1)\delta\ge \mu +(n-1)\delta$.
It follows from \eqref{ew1} that
$$q^k \mid \frac{x_1^n-y_1^n}{x_1-y_1} .$$
So
$$q^k \mid x_1^n-y_1^n .$$
It follows from $\gcd (x_1, y_1)=1$ that $q\nmid x_1y_1$.   By
Euler's theorem,
$$q^k \mid x_1^{q^{k-1}(q-1)}-y_1^{q^{k-1}(q-1)} .$$
It follows that
\begin{equation}\label{aa1}q^k \mid x_1^{(n,q^{k-1}(q-1))}-y_1^{(n,q^{k-1}(q-1))} .\end{equation}
Since $\gcd (m, n)=1$ and $q\mid m$, we have $q\nmid n$. By
\eqref{aa1},
\begin{equation*}q^k \mid x_1^{(n, q-1)}-y_1^{(n, q-1)} .\end{equation*}
Noting that $(n, q-1) \mid q-1$, we have
$$q^k\mid x_1^{q-1}-y_1^{q-1} ,$$ a contradiction.
This completes the proof of Lemma \ref{lem4ba}.
\end{proof}

\begin{lemma}\label{lem4x} If $a$ and $b$ are two positive integers with $b\ge 2$, then $$\phi (ab)\ge \phi (2a).$$
 \end{lemma}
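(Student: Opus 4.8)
The plan is to split off the trivial case $b=2$ and, for $b\ge 3$, route the estimate through the super-multiplicativity of $\phi$ rather than through any (false) monotonicity of $\phi$. The two facts I would use are: $\phi(uv)\ge\phi(u)\phi(v)$ for all positive integers $u,v$; and $\phi(2a)\le 2\phi(a)$ for all positive integers $a$.

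First I would dispose of the case $b=2$: here $ab=2a$, so $\phi(ab)=\phi(2a)$ and the asserted inequality holds with equality. Henceforth assume $b\ge 3$. The key elementary observation is that $\phi(n)\ge 2$ for every $n\ge 3$, since $\phi(n)=1$ forces $n\in\{1,2\}$; in particular $\phi(b)\ge 2$.

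For the super-multiplicative bound I would use the exact identity $\phi(uv)=\phi(u)\phi(v)\,\gcd(u,v)/\phi(\gcd(u,v))$, valid for all positive integers $u,v$. Writing $d=\gcd(a,b)$ and using $d/\phi(d)\ge 1$, this gives $\phi(ab)\ge\phi(a)\phi(b)\ge 2\phi(a)$, by $\phi(b)\ge 2$. For the upper bound on $\phi(2a)$ I would apply the same identity to the pair $(2,a)$: setting $e=\gcd(2,a)\in\{1,2\}$ one gets $\phi(2a)=\phi(a)\,e/\phi(e)$, which equals $\phi(a)$ if $a$ is odd and $2\phi(a)$ if $a$ is even, hence $\phi(2a)\le 2\phi(a)$ in all cases. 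Chaining the two inequalities yields $\phi(ab)\ge 2\phi(a)\ge\phi(2a)$, as desired.

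There is no real obstacle here; the only point requiring care is to remember that $\phi$ is not monotone, so one cannot simply argue $\phi(ab)\ge\phi(a)$ and stop — the factor $2$ coming from $\phi(b)\ge 2$ (for $b\ge 3$) is precisely what is needed to absorb the possible doubling in $\phi(2a)$. If one prefers to avoid quoting the $d/\phi(d)$ identity, both ingredients follow directly from $\phi(p^k)=p^{k-1}(p-1)$ together with the multiplicativity of $\phi$ on coprime arguments.
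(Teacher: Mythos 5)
Your proof is correct, but it is organized differently from the paper's. The paper proves the statement by peeling off the prime factors of $b$ one at a time: it first checks directly that $\phi(ap)\ge\phi(2a)\ge\phi(a)$ for every prime $p$ (splitting into the cases $p=2$, $p$ odd with $p\nmid a$, and $p$ odd with $p\mid a$, using $\phi(ap)=\phi(a)(p-1)$ resp.\ $\phi(ap)=p\phi(a)$), and then writes $b=p_1p_2\cdots p_t$ and chains $\phi(ab)\ge\phi(ap_1\cdots p_{t-1})\ge\cdots\ge\phi(ap_1)\ge\phi(2a)$. You instead dispose of $b=2$ by equality and, for $b\ge 3$, make a one-shot argument: the super-multiplicativity $\phi(ab)\ge\phi(a)\phi(b)$ (via the identity $\phi(uv)=\phi(u)\phi(v)\gcd(u,v)/\phi(\gcd(u,v))$) combined with $\phi(b)\ge 2$ and $\phi(2a)\le 2\phi(a)$ gives $\phi(ab)\ge 2\phi(a)\ge\phi(2a)$. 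Both arguments are elementary and complete; yours is shorter and avoids the induction, at the cost of quoting (or re-deriving) the $\gcd$ identity, while the paper's stepwise version needs nothing beyond $\phi(p^k)=p^{k-1}(p-1)$ and is self-contained. Your closing remark that monotonicity of $\phi$ cannot be invoked is exactly the point both proofs are working around.
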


\begin{proof} If $2\mid a$, then $\phi (2a)=2\phi (a)$. If $2\nmid a$, then $\phi (2a)=\phi
(a)$. In any way, we have $2\phi (a)\ge \phi (2a)\ge \phi (a)$.

If $p$ is an odd prime and $p\nmid a$, then $$\phi (ap)=\phi (a)
(p-1)\ge 2\phi (a)\ge \phi (2a).$$

If $p$ is an odd prime and $p\mid a$, then $$\phi (ap)=p\phi (a)
> 2\phi (a)\ge \phi (2a).$$

In any way, if  $p$ is a prime, then $$\phi (ap)\ge \phi (2a)\ge
\phi (a).$$

Let $b=p_1p_2\cdots p_t$, where $p_1\le p_2\le \cdots \le p_t$ are
primes. Then
\begin{eqnarray*}\phi (ab)=\phi (ap_1p_2\cdots p_t)
\ge \phi (ap_1p_2\cdots p_{t-1}) \ge  \cdots \ge \phi (ap_1)\ge
\phi (2a).\end{eqnarray*} This completes the proof of Lemma
\ref{lem4x}.
\end{proof}

\begin{lemma}\label{lemu1} If $m$ has a prime divisor $q<173$, then  $d(m)\le 12$, where $d(m)$
is defined as in Lemma \ref{lem3a}. \end{lemma}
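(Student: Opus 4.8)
The plan is to combine the divisor‑counting inequality of Lemma \ref{lem3a} with the congruence bound of Lemma \ref{lem4b}. Suppose $q<173$ is a prime divisor of $m$, and let $\alpha_q$ be the exact power of $q$ dividing $m$. Lemma \ref{lem3a} gives $q^{\frac12\alpha_q d(m)-1}\mid x_1^{q-1}-y_1^{q-1}$, and Lemma \ref{lem4b} says $q^6\nmid x_1^{q-1}-y_1^{q-1}$; hence $\frac12\alpha_q d(m)-1\le 5$, i.e.
\[
\alpha_q d(m)\le 12 .
\]
Since $\alpha_q\ge 1$ this already yields $d(m)\le 12$, which is exactly the assertion.

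So the argument is essentially a one‑line consequence of the two preceding lemmas, and I would simply write it out in that order: invoke Lemma \ref{lem1} to know $q$ is odd (so that Lemma \ref{lem4b} applies with $q\ge 3$, although this is already built into Lemma \ref{lem4b}'s statement), apply Lemma \ref{lem3a} to the prime $q\mid m$, apply Lemma \ref{lem4b} to the same $q$ (legitimate because $q<173$), and conclude $\tfrac12\alpha_q d(m)-1\le 5$. The only point requiring a moment's care is making sure the exponent in Lemma \ref{lem3a} is the right one: Lemma \ref{lem3a} guarantees $q^{\frac12\alpha_q d(m)-1}\mid x_1^{q-1}-y_1^{q-1}$ unconditionally (the stronger conclusion with exponent $\frac12\alpha_q d(m)$ needs $q\nmid z$, which we do not assume here), so we must use the weaker exponent $\frac12\alpha_q d(m)-1$, and then $q^6\nmid x_1^{q-1}-y_1^{q-1}$ forces $\frac12\alpha_q d(m)-1\le 5$.

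There is no real obstacle; the work has all been front‑loaded into Lemmas \ref{lem3a} and \ref{lem4b}. The one subtlety worth flagging is that Lemma \ref{lem4b} requires $x\le 80$ (hence $x_1\le 80$) — but that is exactly the standing hypothesis of Section \ref{sec3}, in which Lemma \ref{lemu1} sits, so it is available. I would therefore present the proof as: by Lemma \ref{lem3a}, $q^{\frac12\alpha_q d(m)-1}\mid x_1^{q-1}-y_1^{q-1}$; by Lemma \ref{lem4b}, $q^6\nmid x_1^{q-1}-y_1^{q-1}$; combining, $\frac12\alpha_q d(m)-1\le 5$, so $d(m)\le\alpha_q d(m)\le 12$.
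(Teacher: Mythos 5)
Your proposal is correct and is exactly the paper's argument: apply Lemma \ref{lem3a} to get $q^{\frac12\alpha_q d(m)-1}\mid x_1^{q-1}-y_1^{q-1}$, apply Lemma \ref{lem4b} to get $q^6\nmid x_1^{q-1}-y_1^{q-1}$, and conclude $\alpha_q d(m)\le 12$, hence $d(m)\le 12$. Your remarks about using the weaker (unconditional) exponent from Lemma \ref{lem3a} and about the standing hypothesis $x\le 80$ are both apt.
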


\begin{proof}
In view of Lemma \ref{lem4b}, $$q^6\nmid x_1^{q-1}-y_1^{q-1}.$$
 By Lemma \ref{lem3a},
\begin{equation}\label{eqn4x}q^{\frac 12 \alpha_q d(m) -1}\mid
x_1^{q-1}-y_1^{q-1},\end{equation} where $\alpha_q$ is defined as
in Lemma \ref{lem3a}. Hence
$$\frac 12 \alpha_q d(m) -1\le 5.$$
That is, $\alpha_q d(m)\le 12$. Therefore, $d(m)\le 12$. This
completes the proof of Lemma \ref{lemu1}.
\end{proof}

\begin{lemma}\label{lem4bcd} If $q$ is a prime factor of $m$ with $q<173$, then $q^3\nmid x_1^{q-1}-y_1^{q-1}$. \end{lemma}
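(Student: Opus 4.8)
The plan is to argue by contradiction: assume $q<173$ is a prime dividing $m$ with $q^{3}\mid x_{1}^{q-1}-y_{1}^{q-1}$. First I would record the easy structural facts. Since $\gcd(x_{1},y_{1})=1$, Fermat's little theorem forces $q\nmid x_{1}y_{1}$ (otherwise $\nu_{q}(x_{1}^{q-1}-y_{1}^{q-1})=0$), and Lemma~\ref{lem1a} gives $q\nmid x_{1}-y_{1}$. Put $k=\nu_{q}(x_{1}^{q-1}-y_{1}^{q-1})$. By Lemma~\ref{lem4b} we have $k\le5$, so $k\in\{3,4,5\}$, and by Lemma~\ref{lem3a}, $\tfrac12\alpha_{q}d(m)-1\le k\le 5$, i.e. $\alpha_{q}d(m)\le12$. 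Moreover, exactly as in the proof of Lemma~\ref{lem3a}, Carmichael's primitive divisor theorem supplies $\alpha_{q}\,d(m/q^{\alpha_{q}})$ distinct primes $p$ with $\ell_{p}\mid m$ and $q\mid\ell_{p}$, so Lemma~\ref{lem4ba}, applied with this $k$ (since $q^{k+1}\nmid x_{1}^{q-1}-y_{1}^{q-1}$), yields $\alpha_{q}\,d(m/q^{\alpha_{q}})\le k+1\le6$. Together with $d(m)\ge\alpha_{q}+1$ these inequalities confine $m$ to a short explicit list of shapes: $\alpha_{q}\le3$, at most three distinct prime factors, $d(m)\le12$ (and $d(m)\le6$ as soon as $\alpha_{q}\ge2$).

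Next I would run a finite computer search, extending the one behind Lemma~\ref{lem4b}, listing every triple $(q,x_{1},y_{1})$ with $q<173$ prime, $1\le y_{1}<x_{1}\le80$, $x_{1},y_{1}$ odd and coprime, $q\nmid x_{1}-y_{1}$, and $q^{3}\mid x_{1}^{q-1}-y_{1}^{q-1}$; for each such triple I would also record the factorisation of the primitive part $\Phi_{d}(x_{1},y_{1})=(x_{1}^{d}-y_{1}^{d})/(x_{1}-y_{1})$ into the primes $p$ with $\ell_{p}=d$, for every divisor $d>1$ of every admissible $m$ (for small $d$ this is a direct factorisation, since $\Phi_{d}(x_{1},y_{1})<80^{\,d-1}$; for $40\le d<173$ it is controlled by Lemma~\ref{lem2.1a}; and for $d\ge173$ by Lemmas~\ref{lem4a} and~\ref{lem4ax}). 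For each triple and each admissible $m$ I would then invoke Lemma~\ref{lem4}: its left side satisfies $x_{1}\phi(zd_{1})/z\ge\tfrac{4}{15}x_{1}$, because $z\le x-y<80$ forces $\phi(z)/z\ge\phi(30)/30=\tfrac4{15}$ and $\phi(zd_{1})\ge\phi(z)$, while its right side $\prod_{\ell_{p}\mid m,\ \ell_{p}>1}(1+\tfrac1{p-1})=\prod_{d\mid m,\ d>1}\prod_{\ell_{p}=d}(1+\tfrac1{p-1})$ is bounded above by the product of the finitely many local factors computed above; since $d(m)$ is small and, for each individual $(x_{1},y_{1})$ in the list, the numbers $\Phi_{d}(x_{1},y_{1})$ turn out to have few (and/or only large) prime factors, this upper bound stays below $\tfrac{4}{15}x_{1}$, contradicting Lemma~\ref{lem4}.

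The main obstacle is the numerics of that last step: one must verify, for \emph{every} admissible pair (bad triple $(q,x_{1},y_{1})$, shape of $m$), that the upper bound on $\prod_{\ell_{p}\mid m,\ \ell_{p}>1}(1+\tfrac1{p-1})$ is genuinely smaller than $\tfrac4{15}x_{1}$. The crude estimate $\prod<(3/2)^{d(m)-1}$ is too weak, so one really has to use that for each $(x_{1},y_{1})$ in the list every local factor is well below $3/2$ (typically $1+O(1/p)$ with $p$ large) and that all but the two or three smallest divisors of $m$ contribute essentially nothing; this is where the explicit factorisations from the search, rather than worst-case reasoning, become indispensable. A minor point to keep in view is that the reduction $\gcd(m,n)=1$ and the standing hypotheses that $x_{1},y_{1}$ are odd (Section~\ref{sec1}) and $x\le80$ are precisely what make Lemmas~\ref{lem1}, \ref{lem1a}, \ref{lem3a}, \ref{lem4b}, \ref{lem4ba} and~\ref{lem4} all applicable in this argument.
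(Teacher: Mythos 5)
Your overall strategy (argue by contradiction, run a finite computer check, and squeeze the two sides of Lemma \ref{lem4} against each other) is the same as the paper's, and your structural reductions ($q\nmid x_1y_1$, $q\nmid x_1-y_1$, $\alpha_q d(m)\le 12$, $\alpha_q d(m/q^{\alpha_q})\le 6$) are sound. But the decisive steps are asserted rather than established, and one of them would fail as proposed. First, your lower bound $x_1\phi(zd_1)/z\ge \tfrac{4}{15}x_1$ is useless unless you first know that every bad triple has $x_1$ large: for $x_1\in\{3,5,7,9\}$ it is below $2.4$ (even below $1$ for $x_1=3$), while the right side of Lemma \ref{lem4} is always $\ge 1$ and cannot be pushed below roughly $2.8$ in general. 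The paper's computation establishes exactly the missing fact: for $1\le y_1<x_1\le 9$ there is \emph{no} prime $\gamma<173$ with $\gamma\nmid x_1-y_1$ and $\gamma^3\mid x_1^{\gamma-1}-y_1^{\gamma-1}$, and for $10\le x_1\le 80$ there are at most two such $\gamma$. You describe the search but never assert its outcome, and without $x_1\ge 11$ (so that $\tfrac{4}{15}x_1\ge 2.93$, barely above the paper's upper bound $2\times 1.4=2.8$) the comparison you propose cannot close.

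Second, your mechanism for the upper bound — ``record the factorisation of $(x_1^d-y_1^d)/(x_1-y_1)$ into the primes $p$ with $\ell_p=d$'' for the divisors $d$ of $m$ divisible by $q$ — is not workable: for $q$ near $173$ and $x_1\le 80$ these are integers with several hundred digits whose complete factorisation is out of reach, and you need \emph{all} primes $p$ with $\ell_p=d$ to bound the local factor. The paper avoids factorisation entirely: Lemma \ref{lem4b} gives $q'^6\nmid x_1^{q'-1}-y_1^{q'-1}$ for every prime $q'<173$ dividing $m$, so Lemma \ref{lem4ba} bounds the \emph{number} of primes $p$ with $q'\mid \ell_p$ by $6$; combined with the count of bad primes from the search this gives $|\{p:\ell_p\in D_1\}|\le 11$, and since each such $p$ satisfies $p\ge 7$ (as $\ell_p\mid p-1$ and $\ell_p$ is odd and $>1$), one gets $\prod_{\ell_p\in D_1}(1+\tfrac1{p-1})\le\prod_{i=4}^{14}(1+\tfrac1{p_i-1})<2$, with the $D_2$ part handled by Lemma \ref{lem4ax} as you suggest. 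This counting idea is the missing ingredient; you cite Lemma \ref{lem4ba} only to constrain the shape of $m$, not to control the product, and you yourself flag the resulting numerics as an unresolved ``main obstacle.'' As written, the proposal is a plan whose critical inequality is conjectured, not proved.
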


\begin{proof} We prove the lemma by a contradiction. Suppose that
$$q^3\mid x_1^{q-1}-y_1^{q-1}.$$
By Lemma \ref{lem1a}, $q\nmid x_1-y_1$. Hence
\begin{equation}\label{eqn4xy}q\nmid x_1-y_1,\quad q^3\mid x_1^{q-1}-y_1^{q-1}.\end{equation}
It is clear that $x_1\le x\le 80$. A simple calculation by a
computer shows that,

(i) for $1\le y_1<x_1\le 9$, there is no  prime $\gamma <173$
satisfying \eqref{eqn4xy};

(ii)  for $1\le y_1<x_1$ and $10\le x_1\le 80$, there are at most
two primes $\gamma <173$ satisfying \eqref{eqn4xy}.

Since $q<173$, it follows from (i) and (ii) that $10\le x_1\le
80$.

 In order to derive a contradiction, we divide all positive divisors of $m$ into two classes:
$D_1$ is the set of all positive divisors of $m$ which have at
least one prime divisor $<173$ and $$D_2=\{ d : d\mid m, d>1,
d\notin D_1\}.$$  By Lemma \ref{lem4}, we have
\begin{eqnarray*}x_1\frac{\phi (zd_1)}{z}<
\prod_{\ell_p\in D_1} \left( 1+\frac 1{p-1}\right)
\prod_{\ell_p\in D_2} \left( 1+\frac 1{p-1}\right)
.\end{eqnarray*}

By Lemma \ref{lem1}, $2\nmid m$. It follows that, if $\ell_p>1$
and $\ell_p\mid m$, then, by $\ell_p\mid p-1$, we have $p\ge 7$.
Let $p_i$ be the $i$-th prime. If $\ell_{p_i}>1$ and
$\ell_{p_i}\mid m$, then $i\ge 4$.
 By Lemmas \ref{lem4b} and
\ref{lem4ba}, there are at most $6$ primes $p$ with $\ell_{p}\mid
m$ and $q\mid \ell_p$. So, for any given prime $q<173$,
\begin{equation}\label{abc1}|\{ p : \ell_p\in D_1, q\mid \ell_p\} | \le
6.\end{equation}
 By (ii) and \eqref{abc1}, $$|\{ p : \ell_p\in D_1 \} | \le 11.$$
It follows that
$$\prod_{\ell_p\in D_1} \left( 1+\frac 1{p-1}\right)\le \prod_{i=4}^{14} \left( 1+\frac
1{p_i-1}\right) <2.$$

By $q<173$, we have $q\notin D_2$. Noting that $1\notin D_2$, by
Lemma \ref{lemu1} we have  $|D_2|\le d(m)-2\le 10$.   By Lemma
\ref{lem4ax} and  $p(d)\ge 173$ for $d\in D_2$,
\begin{eqnarray*}\prod_{\ell_p\in D_2} \left( 1+\frac 1{p-1}\right)
&=& \prod_{d\in D_2} \exp \left(\log \prod_{ \ell_p =d } \left(
1+\frac 1{p-1}\right)\right)\\
& \le &  \prod_{d\in D_2} e^{0.032}\le e^{10\times 0.032}< 1.4
.\end{eqnarray*} Hence
\begin{eqnarray*}x_1\frac{\phi (zd_1)}{z}<
\prod_{\ell_p\in D_1} \left( 1+\frac 1{p-1}\right)
\prod_{\ell_p\in D_2} \left( 1+\frac 1{p-1}\right)<2\times
1.4<3.\end{eqnarray*}

It is clear that $z\le x-y<x\le 80$. Since $10\le x_1\le 80$ and
$x_1$ is odd, it follows that $x_1\ge 11$.

If $d_1\ge 2$, then, by Lemma \ref{lem4x}, we have $\phi (zd_1)
\ge \phi (2z)$. A simple calculation gives that,  for $1\le z\le
80$,
$$x_1\frac{\phi (zd_1)}{z} \ge 11 \frac{\phi (2z)}{z} >3,$$
a contradiction. Hence $d_1=1$. If $x_1\ge 13$, then, by  a simple
calculation, for $1\le z\le 80$,
$$x_1\frac{\phi (zd_1)}{z} \ge 13 \frac{\phi (z)}{z} >3,$$
a contradiction. So $x_1=11$.  Then $x=11$ and $z\le x-y\le 10$. A
simple calculation gives that,  for  $1\le z\le 10$,
$$x_1\frac{\phi (zd_1)}{z} = 11 \frac{\phi (z)}{z} >3,$$
a contradiction.

 This completes the proof
of Lemma \ref{lem4bcd}.
\end{proof}

\begin{lemma}\label{lem4cx} We have $p(m)\ge 173$. \end{lemma}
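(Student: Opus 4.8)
The plan is to argue by contradiction, assuming $p(m)=q<173$. Then $q$ is a prime factor of $m$ with $q<173$, so Lemma~\ref{lem4bcd} gives $q^3\nmid x_1^{q-1}-y_1^{q-1}$, and feeding this into Lemma~\ref{lem3a} (applied to the prime $q$) yields $\tfrac12\alpha_q d(m)-1\le 2$, i.e.\ $\alpha_q d(m)\le 6$, where $\alpha_q=\nu_q(m)\ge 1$. Since $q=p(m)$ and $m$ is odd (Lemma~\ref{lem1}), an elementary check on the divisor structure forces $m$ to have one of the four shapes $q,\ q^2,\ qr,\ qr^2$ with $q<r$ odd primes. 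In particular $m$ has at most two prime divisors and at most six divisors exceeding $1$, and the set $D_2$ of divisors $d>1$ of $m$ all of whose prime factors exceed $173$ satisfies $D_2\subseteq\{r,r^2\}$ (so $D_2=\emptyset$ unless $r\ge 173$).

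Next I would bound the product in Lemma~\ref{lem4}. Each divisor $d>1$ of $m$ either lies in $D_2$, in which case Lemma~\ref{lem4ax} bounds its contribution $\prod_{\ell_p=d}(1+\tfrac1{p-1})$ by $e^{0.032}$, or has a prime factor $s<173$ with $s\mid m$; in the latter case every prime $p$ with $\ell_p=d$ has $\ell_p>1$ and $\ell_p\mid m$, hence $p\ge 7$ (as $m$ is odd), while by Lemmas~\ref{lem4bcd} and \ref{lem4ba} there are at most three primes $p$ with $s\mid\ell_p$, for each of the at most two primes $s<173$ dividing $m$. Since every such $d$ is divisible by one of those $s$, there are at most six primes $p$ of this second type, so their joint contribution is at most $\prod_{i=4}^{9}\bigl(1+\tfrac1{p_i-1}\bigr)$ ($p_i$ the $i$-th prime), and altogether $x_1\phi(zd_1)/z<1.8$. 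On the other hand $1\le z\le x-y=d_1(x_1-y_1)\le x-1\le 79$, and by Lemma~\ref{lem4x} one has $\phi(zd_1)\ge\phi(2z)$ when $d_1\ge 2$ (and trivially $\phi(zd_1)=\phi(z)$ when $d_1=1$); a short finite inspection of the admissible pairs $(d_1,z)$ then shows $x_1\phi(zd_1)/z\ge 1.8$ whenever $x_1\ge 5$, and also whenever $x_1=3$ and $(d_1,z)\neq(1,2)$ (the critical value there being $2$, attained e.g.\ at $(d_1,z)=(2,3)$, and the constraint $z\le 2d_1$ forcing $d_1$ large whenever $zd_1$ has many small prime factors). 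Each of these cases contradicts Lemma~\ref{lem4}.

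The single remaining case is $x_1=3,\ y_1=1,\ d_1=1,\ z=2$, i.e.\ $(x,y,z)=(3,1,2)$ --- the equation $\phi(3^m-1)=3^n-1$ --- where the crude bound only gives $x_1\phi(zd_1)/z=\tfrac32<1.8$ and so does not close. Here I would sharpen the estimate of $\prod_{\ell_p\mid m,\ \ell_p>1}(1+\tfrac1{p-1})$ using the near-primality of $x_1^d-y_1^d$ for base $3$ and small $d$: from $3^3-1=2\cdot 13$, $3^5-1=2\cdot 11^2$, $3^7-1=2\cdot 1093$, $3^9-1=2\cdot 13\cdot 757$, and so on, the only prime $p$ with $\ell_p=3$ is $13$, the only one with $\ell_p=5$ is $11$, etc., so that $\prod_{\ell_p=3}(1+\tfrac1{p-1})=\tfrac{13}{12}$, $\prod_{\ell_p=5}(1+\tfrac1{p-1})=\tfrac{11}{10}$, and all higher-order contributions are controlled by Lemmas~\ref{lem4ba}, \ref{lem4a} and \ref{lem4ax}. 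Running through the four admissible shapes $m\in\{q,q^2,qr,qr^2\}$ with these explicit factorizations gives $\prod_{\ell_p\mid m,\ \ell_p>1}(1+\tfrac1{p-1})<\tfrac32$ in every case, again contradicting Lemma~\ref{lem4}. The hard part is exactly this $(3,1,2)$ case: the generic product bound is too lossy there, so the argument has to exploit simultaneously the special arithmetic of $3^m-1$ and the sharp restriction $m\in\{q,q^2,qr,qr^2\}$ obtained in the first step.
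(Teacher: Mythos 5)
Your opening moves coincide with the paper's: Lemma \ref{lem4bcd} plus Lemma \ref{lem3a} give $\alpha_q d(m)\le 6$, hence $m\in\{q,q^2,q\gamma,q\gamma^2\}$, and your generic comparison of the product in Lemma \ref{lem4} against a finite lower bound for $x_1\phi(zd_1)/z$ is the same mechanism as the paper's Case 1 (the paper uses the threshold $1.59$ and a $D_1/D_2$ split; your numbers also check out: at most $6$ primes $p\ge 7$ have $\ell_p\in D_1$, the $D_2$ factor is at most $e^{0.064}$, so the product is below $1.74$, while the minimum of $x_1\phi(zd_1)/z$ over admissible triples other than $(x_1,d_1,z)=(3,1,2)$ is $2$, attained at $(3,2,3)$). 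So everything hinges on the critical case $(x,y,z)=(3,1,2)$, i.e.\ $\phi(3^m-1)=3^n-1$, and there your proposal is both different from the paper and, as written, incomplete.

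The gap is in ``running through the four admissible shapes with these explicit factorizations.'' For $m=q\gamma$ with both primes below $173$ you would need the prime factors of $\Phi_{q\gamma}(3)$, and for, say, $q\gamma=163\cdot 167$ this is a number with more than $10^4$ digits; no such computation is feasible. The plan can be repaired: every $p$ with $\ell_p=d$ satisfies $p\equiv 1\pmod{2d}$, so the at most three primes attached to each of $q$ and $\gamma$ are at least $2q+1,4q+1,6q+1$ and $2\gamma+1,4\gamma+1,6\gamma+1$, giving $\prod(1+\tfrac1{p-1})\le\exp\bigl(\tfrac{11}{12}(\tfrac1q+\tfrac1\gamma)\bigr)<\tfrac32$ except for $(q,\gamma)\in\{(3,5),(3,7)\}$; only $m\in\{15,21\}$ then need the factorizations $3^{15}-1=2\cdot 11^2\cdot 13\cdot 4561$ and $3^{21}-1=2\cdot 13\cdot 1093\cdot 368089$ (and $m=q\gamma^2$ automatically forces $\gamma\ge 173$, since otherwise $\alpha_\gamma d(m)=12>6$). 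But you should also compare with the paper's route, which is shorter and avoids all of this: since $z\le 2$ and $q$ is odd, $q\nmid z$, so Lemma \ref{lem3a} gives the stronger divisibility $q^{\frac12\alpha_q d(m)}\mid 3^{q-1}-1$; a computation shows no prime $p<173$ has $p^3\mid 3^{p-1}-1$, which already cuts the shapes down to $m\in\{q,q\gamma\}$; the case $m=q\gamma$ forces $q^2\mid 3^{q-1}-1$, hence $q=11$, and a primitive-divisor count then gives $11^3\mid 3^n-1$, whence $11\mid n$, contradicting $\gcd(m,n)=1$. Your argument is salvageable only after the repairs above; in its present form the critical case is not proved.
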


\begin{proof} Suppose that $p(m)< 173$.
We will derive a contradiction.
 Let $q$ be a prime divisor of $m$ with $q<173$. By Lemma
\ref{lem4bcd},
$$q^3\nmid x_1^{q-1}-y_1^{q-1} .$$
By Lemma \ref{lem3a},
\begin{equation}\label{eqn4x}q^{\frac 12 \alpha_q d(m) -1}\mid
x_1^{q-1}-y_1^{q-1},\end{equation} where $\alpha_q$ and $d(m)$ are
defined as in Lemma \ref{lem3a}. Hence
$$\frac 12 \alpha_q d(m) -1\le 2.$$
That is, $\alpha_q d(m)\le 6$. So $m\in \{ q, q^2, q\gamma ,
q\gamma^2 \} $.

 We divide all positive divisors of $m$ into two
classes: $D_1$ is the set of all positive divisors of $m$ which
have at least one prime divisor $<173$ and $$D_2=\{ d : d\mid m,
d>1, d\notin D_1\}.$$  By Lemma \ref{lem1}, $2\nmid m$. It follows
that, if $\ell_p>1$ and $\ell_p\mid m$, then, by $\ell_p\mid p-1$,
we have $p\ge 7$. Let $p_i$ be the $i$-th prime. If $\ell_{p_i}>1$
and $\ell_{p_i}\mid m$, then $i\ge 4$.

By Lemma \ref{lem4}, we have
\begin{eqnarray*}x_1\frac{\phi (zd_1)}{z}<
\prod_{\ell_p\in D_1} \left( 1+\frac 1{p-1}\right)
\prod_{\ell_p\in D_2} \left( 1+\frac 1{p-1}\right)
.\end{eqnarray*} Since $x_1$ and $y_1$ are odd and $x_1>y_1\ge 1$,
it follows that $x_1\ge 3$. So $x\ge 3$.

We divide into two cases:

{\bf Case 1:} $x>3$. Then either $x_1\ge 5$ or $d_1\ge 2$.

If $d_1\ge 2$, then,  by Lemma \ref{lem4x}, we have $\phi
(zd_1)\ge \phi (2z)$. It is clear that $z\le x-y<x\le 80$. A
simple calculation shows that, for $x_1\ge 3$, $1\le z\le 80$ and
$d_1\ge 2$,
$$x_1\frac{\phi (zd_1)}{z}\ge 3\frac{\phi (2z)}{z}>1.59.$$

If $d_1=1$ and $x_1=5$, then $x=x_1d_1=5$ and $z\le x-y\le 4$. It
is easy to see that, for $1\le z\le 4$,
$$x_1\frac{\phi (zd_1)}{z}= 5\frac{\phi (z)}{z}>1.59.$$

If $d_1=1$ and $x_1\ge 7$. A simple calculation shows that, for
$x_1\ge 7$ and $1\le z\le 80$,
$$x_1\frac{\phi (zd_1)}{z}\ge 7\frac{\phi (z)}{z}>1.59.$$

In any way, we have
$$x_1\frac{\phi (zd_1)}{z}>1.59.$$

Now we divide into two subcases:

{\bf Subcase 1.1:} All prime factors of $m$ are less than 173.
Then, $D_2=\emptyset$ and $D_1\subseteq \{ q, q^2, q\gamma ,
\gamma , \gamma^2 , q\gamma^2 \} $. By Lemmas
 \ref{lem4ba} and \ref{lem4bcd}, there are at most $3$ primes $p$ with $\ell_p \mid m$ and $q\mid \ell_p$
 and at most $3$ primes $p'$ with $\ell_{p'} \mid m$ and $\gamma \mid \ell_{p'}$.
It follows that
$$|\{ p : \ell_p\in D_1\} | \le 5.$$
So
$$\prod_{\ell_p\in D_1} \left( 1+\frac 1{p-1}\right)\le \prod_{i=4}^{8} \left( 1+\frac
1{p_i-1}\right) <1.56.$$ Thus $1.59<1.56$, a contradiction.

{\bf Subcase 1.2:} Only one of prime factors of $m$ is less than
173. Then $D_1\subseteq \{ q, q^2, q\gamma , q\gamma^2 \} $ and
$D_2\subseteq \{  \gamma , \gamma^2  \} $. By Lemmas
 \ref{lem4ba}  and \ref{lem4bcd}, there are at most $3$ primes $p$ with $\ell_p \mid m$ and $q\mid
 \ell_p$. It follows that
$$|\{ p : \ell_p\in D_1\} | \le 3.$$
So
$$\prod_{\ell_p\in D_1} \left( 1+\frac 1{p-1}\right)\le \prod_{i=4}^{6} \left( 1+\frac
1{p_i-1}\right)<1.4.$$  By Lemma \ref{lem4ax} and $p(d)\ge 173$
for $d\in D_2$,
\begin{eqnarray*}\prod_{\ell_p\in D_2} \left( 1+\frac 1{p-1}\right)
&=& \prod_{d\in D_2} \exp \left(\log \prod_{ \ell_p =d } \left(
1+\frac 1{p-1}\right)\right)\\
& \le &  \prod_{d\in D_2} e^{0.032}\le e^{2\times 0.032}< 1.1
.\end{eqnarray*}
 Thus $1.59<1.4\times 1.1=1.54$, a
contradiction.

{\bf Case 2:} $x=3$. Then $x_1=3$, $y_1=1$, $d_1=1$ and $z\le
x-y\le 2$. By $q\mid m$ and $m$ being odd (Lemma \ref{lem1}), we
have $q\nmid z$. By Lemma \ref{lem3a},
\begin{equation}\label{eqn4xx}q^{\frac 12 \alpha_q d(m)}\mid
3^{q-1}-1,\end{equation} where $\alpha_q$ and $d(m)$ are defined
as in Lemma \ref{lem3a}. A simple calculation shows that, there
are no odd primes $p <173$ with
$$p^3\mid 3^{p-1}-1.$$
So $\frac 12 \alpha_q d(m)\le 2$. It follows that $m\in \{ q,
q\gamma \} $.

Now we divide into two subcases:

{\bf Subcase 2.1:} $m=q\gamma $. Then $\frac 12 \alpha_q d(m)= 2$.
By \eqref{eqn4xx},
$$q^2\mid 3^{q-1}-1.$$
Since $q<173$, it follows from a simple calculation that $q=11$.
This implies that $\gamma \ge 173$, otherwise $\gamma =11$, a
contradiction. So $D_1=\{ 11, 11\gamma  \} $ and $D_2=\{ \gamma \}
$. Since $3^{11}-1=2\times 23\times 3851$, it follows that
$$\{ p : \ell_p=11 \} =\{ 23, 3851 \} .$$
By Carmichael's primitive divisor theorem (see
\cite{Carmichael1913}), the integer $3^{11\gamma}-1$
 has at least one primitive prime divisor $p'\equiv 1\pmod{11\gamma}$.
  By the definition of $\ell_{p'}$, we have $\ell_{p'}
=11\gamma $. Hence
$$| \{ p : \ell_p\in D_1\} |\ge 3.$$
Let $p'_1, p'_2, p'_3$ be three distinct primes with
$\ell_{p'_i}\in D_1$. Since
$$p'_i\mid \frac{3^{\ell_{p'_i}}-1}2,\quad \frac{3^{\ell_{p'_i}}-1}2
\mid \frac{3^m-1}2,$$ it follows that
$$p'_1p'_2p'_3\mid z\frac{3^m-1}2.$$
So
$$(p'_1-1)(p'_2-1)(p'_3-1)\mid \phi \left( z\frac{3^m-1}2 \right) .$$
Since $11\mid \ell_{p'_i}$ and $\ell_{p'_i} \mid p'_i-1$, it
follows that
$$11^3\mid \phi \left( z\frac{3^m-1}2 \right) .$$ In view of
\eqref{eqn1a},
$$11^3\mid z\frac{3^n-1}2.$$
By $z\le 2$, $11^3\mid 3^n-1$. Noting that $3^5-1=2\times 11^2$,
we have $11^2\mid 3^{(5,n)}-1$. It follows that $\gcd (5, n)=5$.
Let $n=5n_1$. Then
\begin{eqnarray*}3^n-1&=&3^{5n_1}-1
=(2\times 11^2 +1)^{n_1}-1\\
&=&\binom{n_1}{1} 2\times 11^2 +\binom{n_1}{2} (2\times
11^2)^2+\cdots +\binom{n_1}{n_1} (2\times 11^2)^{n_1}
.\end{eqnarray*} By $11^3\mid 3^n-1$, we have $11\mid n_1$ and
then $11\mid n$. Since $11=q\mid m$, it contradicts $\gcd (m,
n)=1$.

{\bf Subcase 2.2:} $m=q$. Then $D_1=\{ q\} $ and $D_2=\emptyset $.
By Lemmas
 \ref{lem4ba}  and \ref{lem4bcd},
$|\{ p : \ell_p =q \} |\le 3$. Noting that $x_1=3$ and $d_1=1$, we
have
\begin{eqnarray*}3\frac{\phi (z)}{z}
&<& \prod_{\ell_p\in D_1} \left( 1+\frac 1{p-1}\right)
\prod_{\ell_p\in D_2} \left( 1+\frac 1{p-1}\right)\\
&\le & \left( 1+\frac 1{2q}\right) \left( 1+\frac 1{4q}\right)\left( 1+\frac 1{6q}\right)\\
&\le & \left( 1+\frac 1{6}\right) \left( 1+\frac
1{12}\right)\left( 1+\frac 1{18}\right)\\
& <& 1.4 .\end{eqnarray*} But, for $z\in \{ 1, 2\} $,
$$3\frac{\phi (z)}{z}\ge 1.5,$$
a contradiction.

This completes the proof of Lemma \ref{lem4cx}.
\end{proof}

\medspace

{\bf Now we prove Theorem \ref{thm3} for $x\le 80$.}

 By Lemma \ref{lem4cx}, $p(m)\ge 173$. It follows that $p(m)>x$. For $d\mid m$ and $d>1$, we have $d\ge p(m)\ge 173$.

In view of  Lemma \ref{lem4} and Lemma \ref{lem4a}, we have
\begin{eqnarray}\label{xx1}\log \left( x_1\frac{\phi (zd_1)}{z}\right) &<&\log
\prod_{\substack{\ell_p >1\\ \ell_p \mid m}} \left( 1+\frac
1{p-1}\right)\nonumber\\
&=& \sum_{\substack{d\mid m \\ d>1}} \log \prod_{\ell_p =d}\left(
1+\frac 1{p-1}\right)\nonumber\\
&<&  3.3\sum_{\substack{d\mid m\\
d>1}}\frac{\log\log d}{\phi (d)}
 .\end{eqnarray}

By Lemma \ref{lem3} and $p(m)>x$,
$$|{\cal Q}_m|<\frac {\log ( 2p(m) )}{\log 2}  :=g(m),$$
where ${\cal Q}_m$ is the set of all prime divisors of $m$.

Since $p(m)\ge 173$, similar to the  arguments in the previous
section, we have
\begin{eqnarray*}\sum_{\substack{d\mid m\\
d>1}}\frac{\log\log d}{\phi (d)} &<& \left( 1 +\frac {\log\log
p(m)} {p(m)-1} \frac {p(m)}{p(m)-\log\log p(m)}  \right)^{g(m)}
-1\\
 &:= & T-1.
\end{eqnarray*}
By $p(m)\ge 173$,
\begin{eqnarray*}\log T &=&g(m)\log \left( 1 +\frac {\log\log p(m)} {p(m)-1} \frac {p(m)}{p(m)-\log\log p(m)}
\right)\\
&<& g(m)\frac {\log\log p(m)} {p(m)-1} \frac {p(m)}{p(m)-\log\log p(m)}\\
&=& \frac {\log (2p(m))}{\log 2}  \frac
{\log\log p(m)} {p(m)-1} \frac {p(m)}{p(m)-\log\log p(m)} \\
&=&\frac 1{\log 2}\frac {\log (2p(m))}{\sqrt{p(m)}} \frac{\log\log
p(m)} {\sqrt{p(m)}} \frac{p(m)}{p(m)-1} \frac {p(m)}{p(m)-\log\log
p(m)} \\
&\le & \frac 1{\log 2} \frac {\log 346}{\sqrt{173}} \frac{\log\log
173} {\sqrt{173}} \frac{173}{172} \frac {173}{173-\log\log
173} \\
&<& 0.082.
\end{eqnarray*}
It follows that
$$\sum_{\substack{d\mid m\\
d>1}}\frac{\log\log d}{\phi (d)} < e^{0.082} -1<0.086.$$ By
\eqref{xx1}, we have
\begin{eqnarray*}\log \left( x_1\frac{\phi (zd_1)}{z} \right)&<& 3.3\sum_{\substack{d\mid m\\
d>1}}\frac{\log\log d}{\phi (d)}\\
 &<&3.3 \times 0.086 \\
 &<&
0.3.\end{eqnarray*} Hence
$$x_1\frac{\phi (zd_1)}{z}<e^{0.3}<1.35.$$

 Since $x_1$ and $y_1$ are odd and
$x_1>y_1\ge 1$, it follows that $x_1\ge 3$.

If $d_1\ge 2$, then,  by Lemma \ref{lem4x}, we have $\phi
(zd_1)\ge \phi (2z)$. For $x_1\ge 3$, $1\le z\le 80$ and $d_1\ge
2$, we have
$$x_1\frac{\phi (zd_1)}{z}\ge 3\frac{\phi (2z)}{z}>1.4,$$
 a contradiction. Hence $d_1=1$.
If $x_1\ge 7$, then, by $1\le z\le x-y \le 80$, we have
$$x_1\frac{\phi (zd_1)}{z}\ge 7\frac{\phi (z)}{z}>1.4,$$
 a contradiction. Hence $x_1<7$. Since $x_1$ is odd and $x_1>1$, we have $x_1\in \{ 3, 5\} $.
So $x\le 5$ and $z\le x-y\le 4$. It is easy to see that, for $1\le
z\le 4$,
$$x_1\frac{\phi (zd_1)}{z}\ge 3\frac{\phi (z)}{z}>1.4,$$
 a contradiction.

This completes the proof of Theorem \ref{thm3}.

\section*{Acknowledgments}

This work was supported by the National Natural Science Foundation
of China, Grant No. 11371195 and a project funded by the Priority
Academic Program Development of Jiangsu Higher Education
Institutions.

\end{document}